\documentclass{article}
\usepackage{graphicx} 
\usepackage{amsthm, amsmath, amssymb, tikz, bm}
\usepackage{mathtools}
\usepackage{xifthen}
\usepackage{comment}
\usepackage[margin=4.5cm]{geometry}
\usepackage[shortlabels]{enumitem}
\usepackage{todonotes}
\usepackage[colorlinks=true,
linkcolor=blue,citecolor=blue,
urlcolor=blue]{hyperref}

\usetikzlibrary{calc,shapes, backgrounds}
\usetikzlibrary[patterns]

\newtheorem{theorem}{Theorem}[section]
\newtheorem{lemma}[theorem]{Lemma}
\newtheorem{sublemma}{}[theorem]
\newtheorem{corollary}[theorem]{Corollary}

\newtheorem{proposition}[theorem]{Proposition}

\newtheorem{Problem}{Problem}

\newcommand{\romannum}[1]{\romannumeral#1\relax}

\makeatletter
\newcommand*{\rom}[1]{\expandafter\@slowromancap\romannumeral #1@}
\makeatother

\title{Unavoidable cycle-contraction minors of large $2$-connected graphs}
\author{Wayne Ge \and James Oxley}

\date{\today}

\begin{document}

\maketitle
\begin{abstract}
    It is well known that every sufficiently large connected graph has, as an induced subgraph, $K_n$, $K_{1,n}$, or an $n$-vertex path. A 2023 paper of Allred, Ding, and Oporowski identified a set of unavoidable induced subgraphs of sufficiently large $2$-connected graphs. In this paper, we establish a dual version of this theorem by focusing on the minors obtained by contracting cycles.
\end{abstract}

\section{Introduction}

For graph and matroid terminology not explicitly defined here, we follow Bondy and Murty~\cite{Bondy-Murty} and Oxley~\cite{Oxl11}. In particular, we allow graphs to have loops and parallel edges; a graph is {\it simple} if it has neither. A {\it bond} in a graph is a minimal edge cut.

For an integer $k$ exceeding one, a graph $G$ is {\it $k$-connected} if $|V(G)|\geq k$ and, whenever $u$ and $v$ are distinct vertices of $G$, there are at least $k$ pairwise internally disjoint $uv$-paths. In particular, for $n\geq2$, the {\it bond graph $B_n$} that consists of two vertices joined by $n$ parallel edges is $2$-connected. Observe that our definition of a $k$-connected graph is broader than that of many authors who require that $|V(G)|\geq k+1$ for such a graph $G$. A graph $H$ with at least two vertices is {\it $k$-edge-connected} if $H\setminus Z$ is connected for all subsets $Z$ of $E(H)$ with $|Z|<k$. We note that a single-vertex graph is neither $2$-connected nor $2$-edge-connected.

\subsection{Unavoidable families}

The following theorem of Ramsey~\cite{Ramsey1930} has led to the development of a widely studied area of combinatorics.

\begin{theorem}\label{thm:ramsey}
    Let $r$ be a positive integer. There is an integer $R(r)$ such that every graph on at least $R(r)$ vertices has, as an induced subgraph, either $K_r$ or its complement $\overline{K_r}$.
\end{theorem}

Ramsey's Theorem identifies a family of graphs, namely, $\{K_r,\overline{K_r}\}$, such that every sufficiently large graph must have a member of this family as an induced subgraph. There are many other results of this type. For example, a well-known theorem (see, for instance,~\cite[Proposition~9.4.1]{Diestel}) states that every sufficiently large connected graph has, as an induced subgraph, $K_n$, $K_{1,n}$, or an $n$-vertex path. Allred, Ding, and Oporowski~\cite{SDO2020} determined a family of $2$-connected graphs such that every sufficiently large $2$-connected graph has a member of this family as an induced subgraph. The aim of this paper is to establish a dual of Allred, Ding, and Oporowski's result. 

To distinguish our setting from other problems in Ramsey theory, we refer to such results as a search for \emph{unavoidable families}. Let $\mathcal{F}$ be a family of graphs. An \emph{unavoidable-families characterization} consists of a subfamily $\mathcal{U}$ of $\mathcal{F}$ and a relation $\leq$ such that, for every sufficiently large graph $G$ in $\mathcal{F}$, there is a large graph $H$ in $\mathcal{U}$ such that  $H \leq G$. To describe what “sufficiently large” means, one typically specifies the existence of a function such as $R(r)$ in Theorem~\ref{thm:ramsey}. While much work has gone into improving the upper and lower bounds for such functions, this paper, like many others in the area, concerns itself only with the existence of such functions.

While the three theorems mentioned above characterize unavoidable induced subgraphs for different connectivity classes, such characterizations for topological minors have been given for $3$- and internally-$4$-connected graphs~\cite{OOT1993}. For parallel minors, such characterizations are given for $1$-, $2$-, $3$-, and internally-$4$-connected graphs~\cite{Chun2009}. In the next section, we introduce cycle-contraction minors of graphs and note how such minors are related through duality to induced subgraphs. The main result of this paper is an unavoidable-families characterization for cycle-contraction minors of 2-connected graphs.

\subsection{cc-minors}
A graph $K$ is a {\it cycle-contraction minor} or {\it cc-minor} of a graph $J$ if there is a sequence $J_0,J_1,\dots,J_n$ of graphs such that $(J_0,J_n)=(J,K)$ and, for each $i\in\{1,2,\dots,n\}$, there is a cycle $C_{i-1}$ of $J_{i-1}$ such that $J_{i}=J_{i-1}/C_{i-1}$. In this paper, we determine a list of loopless $2$-connected graphs such that every sufficiently large $2$-connected graph has a member of the list as a cc-minor. Unless otherwise stated, every cycle contraction we perform is accompanied by the contraction of all of the loops it creates, where the contraction of a loop has the same effect as the deletion of the corresponding edge. Because each loop is itself a cycle, each contraction of a loop is an example of the contraction of a cycle.

Let $G$ be a graph and let $H$ be an induced subgraph of $G$. Clearly, $H$ can be obtained from $G$ by a sequence of operations, each consisting of either deleting a bond from the current graph or deleting an isolated vertex. When $G$ is a plane graph with planar dual $G^*$, the planar dual $H^*$ of $H$ is obtained from $G^*$ by a sequence of operations, each consisting of contracting a cycle in the current graph. Although, in the planar setting, the dual operation of deleting a bond is contracting a cycle, we need more care in specifying the dual of the operation of taking an induced subgraph. This is because a graph obtained from $G$ by repeatedly deleting bonds and isolated vertices need not be an induced subgraph of $G$. For example, in $C_4$, a matching consisting of two edges $e$ and $f$ forms a bond, yet $C_4 \setminus \{e,f\}$ is not an induced subgraph of $C_4$. In Section~\ref{sec:pre}, we shall prove the following result, which links induced subgraphs and cc-minors via duality. 

\begin{lemma}\label{cc} 
Let $G$ be a loopless $2$-connected plane graph. A graph $H$ is a $2$-connected induced subgraph of $G$ if and only if $H^*$ is a $2$-connected cc-minor of $G^*$. 
\end{lemma}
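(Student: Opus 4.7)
The plan is to use matroid and planar duality to translate between vertex-deletion in $G$ and cycle-contraction in $G^*$. The key identity is: for any vertex $v$ of $G$, the star $E_G(v)$ is a cocycle of $G$, hence under planar duality corresponds to a disjoint union of simple cycles of $G^*$ (together bounding the face of $G^*$ associated with $v$); so deleting $v$ from $G$ corresponds to contracting these cycles (together with any resulting loops) in $G^*$. I will also use the standard fact that the planar dual of a loopless $2$-connected plane graph is again loopless and $2$-connected.

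For the forward direction, let $H=G[V(H)]$ be a $2$-connected induced subgraph and order $V(G)\setminus V(H)=\{v_1,\dots,v_k\}$. Setting $G_0=G$ and $G_i=G_{i-1}\setminus v_i$, each step's star $E_{G_{i-1}}(v_i)$ is a cocycle of $G_{i-1}$, hence decomposes as a disjoint union of bonds which corresponds under planar duality to a disjoint union of simple cycles in $G_{i-1}^*$. Contracting these cycles one at a time (with loops contracted as needed) realizes $G_i^*$ as a cc-minor of $G_{i-1}^*$, and concatenating across $i$ yields $H^*$ as a cc-minor of $G^*$.

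For the backward direction, suppose $H^*$ is a $2$-connected cc-minor of $G^*$ and let $X\subseteq E(G)$ be the total contracted edge set, viewed in the primal. Since cc-minor reduction corresponds dually to a sequence of bond-and-bridge deletions, $X$ is a union of cocircuits of $M(G)$; by matroid duality this is equivalent to $E(G)\setminus X$ being a flat of $M(G)$, i.e., every edge of $X$ has its endpoints in different connected components of $G\setminus X$. But $H$ (the $2$-connected dual of $H^*$) spans a single connected component of $G\setminus X$, so every other component is a singleton; let $S$ be the set of these isolated vertices. Then every $e\in X$ has at least one endpoint in $S$, giving $X\subseteq\bigcup_{v\in S}E_G(v)$, and the reverse inclusion is automatic. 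Hence $H=G[V(G)\setminus S]$ is an induced subgraph.

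The main obstacle will be setting up the matroid-planar duality correctly across the entire cc-minor sequence, particularly handling intermediate graphs which need not be $2$-connected and may exhibit bridges in the primal (loops in the dual). Once the characterization of cc-minor-realizable edge sets as unions of cocircuits (equivalently, complements of flats of $M(G)$) is in hand, both directions reduce to short connectivity arguments.
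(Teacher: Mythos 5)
Your forward direction coincides with the paper's: the star of each deleted vertex is an edge cut of the current graph, hence a disjoint union of bonds, and these are cycles of the dual, so the deletions translate into cycle contractions. Your backward direction, however, is genuinely different. The paper argues topologically in the plane: using the $2$-connectivity of $H^*$, for each contracted cycle $C\subseteq Z$ either all edges interior to $C$ or all edges exterior to $C$ lie in $Z$ (but not both), and colouring the corresponding faces red recovers the vertex set of $G$ to delete. You instead argue at the matroid level: each cycle contracted in the current dual is a bond of the current primal, i.e.\ a set $D\setminus X'$ for a cocircuit $D$ of $M(G)$, so the total contracted set $X$ is a union of cocircuits of $M(G)$ and $E(G)\setminus X$ is a flat; then the connectedness of $H$ forces every component of the spanning subgraph $G\setminus X$ other than the one on $V(H)$ to be a singleton, and $X$ is exactly the set of edges meeting those singletons. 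This is correct, avoids the Jordan-curve/face-colouring reasoning entirely (only the identity $M(G^*/Z')=M(G\setminus Z')^*$ is needed at each step, so intermediate non-$2$-connected graphs cause no trouble), and is arguably tighter than the paper's sketch; the price is the appeal to the cocircuit--flat complementation. Both your proof and the paper's leave the same final step implicit, namely the identification of the abstract planar dual of the cc-minor $H^*$ with the primal graph $G\setminus X$ stripped of its isolated vertices.
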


The statements of both our main result and of the theorem of Allred, Ding, and Oporowski~\cite{SDO2020} will rely on Tutte's tree decomposition result for $2$-connected graphs, which we shall introduce next.

Let $G_1$ and $G_2$ be graphs such that $V(G_1)\cap V(G_2)=\{u,v\}$ and $E(G_1)\cap E(G_2)=\{e\}$ where $e$ is neither a loop nor a cut edge of $G_1$ or $G_2$. The graph $G_1\cup G_2$ is the {\it parallel connection} of $G_1$ and $G_2$ with {\it basepoint $e$}. The graph obtained from $G_1\cup G_2$ by deleting $e$ is the {\it $2$-sum}, $G_1\bigoplus_2 G_2$, of $G_1$ and $G_2$ with {\it basepoint $e$}.

A \emph{graph-labeled tree} is a tree $T$ with vertex set $\{G_1, G_2, \dots, G_k\}$ for some positive integer $k$, satisfying the following conditions for all distinct $i,j \in \{1,2,\dots,k\}$:
\begin{itemize}
    \item[(\romannum{1})] $G_i$ is a graph;
    \item[(\romannum{2})] if $G_{i}$ and $G_{j}$ are joined by an edge $e$ of $T$, then $E(G_{i})\cap E(G_{j})=\{e\}$ and $e$ is neither a loop nor a cut edge of $G_{i}$ or $G_{j}$; and
    \item[(\romannum{3})] if $G_{i}$ and $G_{j}$ are non-adjacent, then $E(G_{i})\cap E(G_{j})$ is empty.
\end{itemize}

We call $G_1,G_2,\dots,G_k$ the {\it vertex labels} of $T$; for each $i$ in $\{1,2,\dots,k\}$, the edges in $E(G_{i})\cap E(T)$ are called the {\it basepoints} of $G_i$.

Let $e$ be an edge of a graph-labeled tree $T$ and suppose $e$ joins the vertices $H_1$ and $H_2$. If we contract $e$ from $T$ and relabel by $H_1\bigoplus_2 H_2$ the vertex that results by identifying the endpoints of $e$ leaving all other edge and vertex labels unchanged, then we get a new graph-labeled tree, $T/e$.

A {\it tree decomposition} of a loopless $2$-connected graph $G$ is a graph-labeled tree $T$ such that if $V(T)=\{G_1,G_2,\dots,G_k\}$ and $E(T)=\{e_1,e_2,\dots,e_{k-1}\}$, then
\begin{itemize}
    \item[(\romannum{1})] $E(G)=(E(G_1)\cup E(G_2)\cup\dots\cup E(G_k))-\{e_1,e_2,\dots,e_{k-1}\}$;
    \item[(\romannum{2})] $|E(G_i)|\geq 3$ for all $i$ unless $|E(G)|<3$, in which case, $k=1$ and $G_1=G$; and
    \item[(\romannum{3})] $G$ is the graph that labels the single vertex of $T/e_1,e_2,\dots,e_{k-1}$.
\end{itemize}

Tutte~\cite{Tutte1966} proved that every 2-connected graph has a tree decomposition in which the vertex labels are restricted. However, such a tree decomposition need not be unique (see, for example,~\cite[p.308]{Oxl11}).

\begin{theorem}\label{thm:tree-decomp}
    Let $G$ be a loopless $2$-connected graph. Then $G$ has a tree decomposition in which every vertex label is a simple $3$-connected graph, a copy of $K_3$, or a copy of $B_3$. Moreover, each vertex label is isomorphic to a minor of $G$.
\end{theorem}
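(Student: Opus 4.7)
The plan is to prove the theorem by strong induction on $|E(G)|$. The base cases are when $G$ is itself a simple $3$-connected graph, a copy of $K_3$, or a copy of $B_3$; in each of these situations the graph-labelled tree consisting of the single vertex $G$ satisfies conditions (i)--(iii) of the tree-decomposition, and the ``moreover'' clause is trivially true.

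For the inductive step, the crux of the argument is establishing the existence of a nontrivial $2$-sum decomposition $G = G_1 \bigoplus_2 G_2$ with basepoint some edge $e$, where $G_1$ and $G_2$ are loopless $2$-connected graphs with $3 \le |E(G_i)| < |E(G)|$. I would prove this by case analysis. If $G$ has a parallel class $P$ of size $k \ge 2$ between vertices $u, v$, then either $G$ is a bond $B_n$ with $n \ge 4$, which splits as $B_3 \bigoplus_2 B_{n-1}$, or $V(G) \supsetneq \{u, v\}$, in which case I set $G_1 = B_{k+1}$ on $\{u, v\}$ (using the edges of $P$ together with a fresh edge $e$) and $G_2 = (G - P) + e$. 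If instead $G$ is simple, then by assumption $G$ fails to be $3$-connected, so there is a vertex $2$-cut $\{u, v\}$; partitioning the components of $G - \{u, v\}$ into two nonempty subfamilies and forming the two induced subgraphs of $G$ on $\{u, v\}$ together with each subfamily, each augmented by a fresh $uv$-edge $e$, yields the required pieces. In every case, the $2$-connectivity of $G$ together with Menger's theorem delivers the $2$-connectivity and the $|E(G_i)| \ge 3$ bound on both pieces.

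Once the $2$-sum is in hand, the induction hypothesis furnishes tree-decompositions $T_1$ and $T_2$ of $G_1$ and $G_2$, each with vertex labels of the allowed types. There is a unique vertex label $H_1$ of $T_1$ containing $e$, and likewise $H_2$ in $T_2$; I form $T$ by joining $T_1$ and $T_2$ with a new tree-edge labelled $e$ between $H_1$ and $H_2$. That $T$ is a tree-decomposition of $G$ follows by combining the corresponding properties of $T_1$ and $T_2$: condition (i) uses $E(G) = (E(G_1) \cup E(G_2)) - \{e\}$, and the non-loop/non-cut-edge requirement on the new tree-edge holds because $G_1$ and $G_2$ are $2$-connected. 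The ``moreover'' clause is preserved because both $G_1$ and $G_2$ are minors of $G$: each can be obtained from $G$ by deleting all edges on the opposite side of the decomposition except those on a chosen $uv$-path in that side, and then contracting that path to a single edge; hence every vertex label of $T_1$ or $T_2$, being a minor of $G_1$ or $G_2$, is a minor of $G$.

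The main obstacle is the structural claim in the inductive step, in particular verifying that the two pieces are genuinely $2$-connected and have at least three edges each, especially in the subcase where $G$ is $3$-connected but has a parallel class (so $G_2 = (G - P) + e$ must be shown $2$-connected via the remaining internally disjoint $uv$-paths supplied by Menger's theorem). Once this structural claim is in place, the tree assembly and the minor verification are essentially bookkeeping.
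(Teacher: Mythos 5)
This theorem is not proved in the paper at all: it is Tutte's decomposition theorem, quoted from \cite{Tutte1966}, so there is no in-paper argument to compare against. Your proposal is the standard inductive proof of that result (split off a parallel class as a bond, split a simple non-$3$-connected graph along a $2$-vertex-cut, recurse, and rejoin the two tree-decompositions along the fresh basepoint), and it is sound in outline; the termination, $2$-connectivity, and minor-containment arguments you sketch all go through. Two details need tightening. First, in the simple case, if $uv$ is itself an edge of $G$ then the two \emph{induced} subgraphs on $\{u,v\}$ together with each subfamily of components both contain that edge, so $E(G_1)\cap E(G_2)=\{e,uv\}$ rather than $\{e\}$ and the identity $E(G)=(E(G_1)\cup E(G_2))-\{e\}$ fails; you must assign any existing $uv$-edge to exactly one side. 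Second, when joining $T_1$ and $T_2$ by the new tree-edge $e$, you should justify that $e$ lies in a \emph{unique} label of each $T_i$ (it is an edge of $G_i$, hence not a basepoint of $T_i$, hence in exactly one label by conditions (ii) and (iii) of a graph-labelled tree) and that $e$ is not a loop or cut edge of those labels, which holds because none of $K_3$, $B_3$, or a simple $3$-connected graph has a loop or cut edge. Finally, the degenerate graph $B_2$ satisfies the hypotheses but can only be decomposed as the single label $B_2$; this is a defect of the theorem's statement (implicitly it assumes $|E(G)|\ge 3$, matching the exception clause in the definition of tree-decomposition), not of your argument.
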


Next we use tree decompositions to state Allred, Ding, and Oporowski's~\cite{SDO2020} identification of a set of unavoidable $2$-connected induced subgraphs of large simple 2-connected graphs.

\begin{theorem}\label{thm:unavoidable induced subgraph}
    Let $k$ be an integer exceeding two. Then there is an integer $f(k)$ such that every simple $2$-connected graph with at least $f(k)$ vertices has, as an induced subgraph, one of
    \begin{itemize}
        \item[(\romannum{1})]  $K_k$;
        \item[(\romannum{2})]  a subdivision of $K_{2,k}$;
        \item[(\romannum{3})]  a graph that is obtained from a subdivision of $K_{2,k}$ by adding an edge joining the two degree-$k$ vertices; or
        \item[(\romannum{4})]  a $k$-vertex graph having a tree decomposition whose underlying tree is a path $P$, such that each vertex of $P$ is labeled by
        \begin{itemize}
            \item[(a)] a copy of $K_4$ in which the basepoints form a matching, or
            \item[(b)] a copy of $K_3$ or $B_3$,
        \end{itemize}
        where neither end of $P$ is labeled by $B_3$, and no two consecutive vertices of $P$ are labeled by $B_3$.

    \end{itemize}
\end{theorem}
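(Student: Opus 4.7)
The plan is to apply Theorem~\ref{thm:tree-decomp} to obtain a tree decomposition $T$ of $G$ whose vertex labels are simple $3$-connected graphs, copies of $K_3$, or copies of $B_3$. With a suitable threshold $N=N(k)$, either some vertex label is a simple $3$-connected graph on at least $N$ vertices (Case~A), or every label has order at most $N$ but $T$ itself has many vertices (Case~B). The two cases will be handled by rather different arguments and then combined to yield $f(k)$.

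In Case~A, I would first establish a sublemma: every sufficiently large simple $3$-connected graph has, as an induced subgraph, $K_k$, a subdivision of $K_{2,k}$, or such a subdivision with the added edge between the two branch vertices. This sublemma is a Ramsey argument. If the clique number is at least $k$, we obtain $K_k$; otherwise a large independent set together with $3$-connectivity produces many internally disjoint paths between a pair of vertices $u,v$. A pigeonhole argument over path lengths and pairwise interactions should then extract an induced $K_{2,k}$-subdivision, with the presence or absence of the edge $uv$ yielding case (iii) or (ii) respectively. After the sublemma, I still have to transfer the induced subgraph from the large label back to $G$.

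In Case~B, I would find a long path $H_1H_2\cdots H_m$ in $T$. A Ramsey argument on the isomorphism types of the labels, together with the relative positions of their basepoints, lets me pass to a long subpath in which every internal label is the same graph and its two basepoints occupy fixed positions. If this repeated label is anything other than $K_3$, $B_3$, or a $K_4$ with its basepoints forming a matching, I would argue that inside a single label one can already find one of (i), (ii), or (iii), reducing to the arguments of Case~A. Otherwise, after trimming the ends so they are not $B_3$ and thinning so that no two consecutive internal vertices are $B_3$, the resulting subpath produces an induced subgraph of $G$ of the form in (iv). The induced-subgraph property should be forced by the definition of a tree decomposition: no edge of $G$ bypasses the labels of $T$, so two vertices lying inside the subpath's contribution can share an edge in $G$ only if they share one inside some label of the subpath.

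The main obstacle, I expect, is the lifting step in Case~A: a vertex label of $T$ is only guaranteed to be a minor of $G$, not an induced subgraph, so the induced-subgraph statement for $3$-connected graphs does not transfer automatically. A plausible fix is to choose the $K_{2,k}$-subdivision inside the label so that its branch vertices and internal path vertices correspond to single vertices of $G$ rather than contracted blobs, and to exploit the fact that the basepoints of the label separate it from the rest of $G$, so that no unexpected edges appear between the chosen vertices outside of the label's own structure. A cleaner variant would prove the sublemma directly inside $G$ by working only with labels that are internal vertices of $T$ (whose basepoints are all genuine edges of $G$), avoiding the lifting issue altogether.
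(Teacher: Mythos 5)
First, a point of orientation: the paper does not prove this statement at all --- Theorem~\ref{thm:unavoidable induced subgraph} is quoted from Allred, Ding, and Oporowski~\cite{SDO2020} and used as a black box. So there is no internal proof to compare yours against; I can only assess your proposal on its own terms.

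The decisive gap is the sublemma on which your Case~A rests. You claim that every sufficiently large simple $3$-connected graph contains, as an induced subgraph, $K_k$, a subdivision of $K_{2,k}$, or such a subdivision plus the edge between the branch vertices. This is false. Take the wheel $W_n$ for large $n$: it is $3$-connected, its clique number is $3$, and every vertex except the hub has degree $3$, so for $k\ge 4$ it contains neither $K_k$ nor any induced subdivision of $K_{2,k}$ (which would require two vertices of degree at least $k$). The same is true of the graphs $V_n$ of Figure~\ref{fig:W_k,L_k,V_k}, which are $3$-regular. This is precisely why outcome (\romannum{4}) appears in the theorem: large $3$-connected graphs such as wheels and ladders only yield induced fans and induced chains of $K_4$'s, i.e.\ graphs whose tree decomposition is a path of the kind described in (\romannum{4}). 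Your Case~A therefore needs its own unavoidable-induced-subgraph theorem for $3$-connected graphs with \emph{four} outcomes (the three you list plus fans/ladders), and that theorem is the real content here; the Ramsey sketch you give (``a pigeonhole argument over path lengths and pairwise interactions should then extract an induced $K_{2,k}$-subdivision'') does not produce it. The same false reduction reappears in Case~B, where you propose to handle any repeated label other than $K_3$, $B_3$, or a matching-basepointed $K_4$ ``by the arguments of Case~A.''

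Two further unresolved points, which you partly acknowledge but do not fix: (1) the lifting problem --- a vertex label of Tutte's decomposition is only a minor of $G$, and an induced subgraph of a label need not be induced in $G$, since a label adjacent at a basepoint $uv$ may contribute an edge of $G$ joining $u$ and $v$ (e.g.\ when that label is a $B_3$); and (2) in Case~B, the subgraph of $G$ corresponding to a subpath of $T$ is likewise not automatically induced, for the same reason at each basepoint of the subpath, and the two end basepoints of the subpath are not edges of $G$ at all. These issues are exactly what the distinctions among outcomes (\romannum{2}), (\romannum{3}), and the $B_3$-placement constraints in (\romannum{4}) are designed to absorb, and a correct proof has to engage with them rather than defer them.
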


The following theorem is the main result of this paper. A graph $H$ is a \textit{parallel extension} of a graph $G$ if $H$ can be obtained from $G$ by, for each non-loop edge $e$ in $G$, deleting $e$ and joining the ends of $e$ by a non-empty set of parallel edges. Similarly, a graph $J$ is a \textit{parallel-path extension} of a graph $K$ if $J$ can be obtained from $K$ by, for each non-loop edge $e$ in $K$, deleting $e$ and adding a non-empty set of internally disjoint paths, each of which contains at least one edge and connects the ends of $e$.

\begin{theorem}\label{main}
    Let $r$ be a positive integer. There is an integer $g(r)$ such that every loopless $2$-connected graph $G$ with $|E(G)| \geq g(r)$ has, as a cc-minor, a parallel-path extension of a graph having a tree decomposition whose underlying tree is a path on at least $r$ vertices, where each vertex is labeled either by a copy of $K_3$ or $B_3$, or by a copy of $K_4$ in which the basepoints form a matching.
\end{theorem}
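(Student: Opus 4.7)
The plan is to apply Tutte's tree-decomposition theorem (Theorem~\ref{thm:tree-decomp}) to $G$ and then perform case analysis on the resulting graph-labelled tree, using Ramsey-type arguments together with the Oporowski--Oxley--Thomas theorem on the unavoidable minors of large $3$-connected graphs. After applying Theorem~\ref{thm:tree-decomp} to obtain a tree decomposition $T$ of $G$ whose vertex labels are simple $3$-connected graphs, copies of $K_3$, or copies of $B_3$, a pigeonhole argument shows that for $g(r)$ chosen sufficiently large, either (a) some vertex label is a simple $3$-connected graph with very many edges, or (b) $T$ has very many vertex labels, each of bounded size.

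In case (a), we invoke the Oporowski--Oxley--Thomas theorem: every sufficiently large $3$-connected graph contains, as a minor, a large wheel $W_n$, a large complete graph $K_n$, a large complete bipartite graph $K_{3,n}$, or the graph obtained from $K_{3,n}$ by joining its two degree-$n$ vertices. For each of these four graphs we directly exhibit a sequence of cycle contractions that produces, as a cc-minor, a parallel-path extension of a graph whose tree decomposition is a path of the required form. For instance, in $W_n$ the consecutive triangles around the hub already give a chain of $K_3$-labels, while in $K_{3,n}$ suitable cycle contractions yield a chain of $B_3$-labels. Since the chosen $3$-connected vertex label is a $2$-summand of $G$, any cc-minor of the label extends to a cc-minor of $G$ after the other $2$-summands have been cycle-contracted away.

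In case (b) we analyze the shape of $T$. Either $T$ contains a long path, or $T$ has many leaves, in which case we first extract a path from the branching structure by cycle contractions that consolidate the branches. Along the resulting long path in $T$, the vertex labels come from a finite list by the failure of case (a), so a Ramsey-type argument supplies a long subpath whose label types stabilize. We then verify that each possible label can be reduced, via cycle contractions inside the label, to one of the form $K_3$, $B_3$, or $K_4$ with matching basepoints. A final cleanup enforces the constraints on $B_3$: two adjacent $B_3$-labels can be $2$-summed into a $B_n$ and then cycle-contracted back down to a $B_3$, and $K_3$-labels can be interleaved to ensure no $B_3$ is at an end of the path or adjacent to another $B_3$.

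The principal obstacle is case (a). Because cycle contraction identifies all $k$ vertices of a $k$-cycle simultaneously and destroys all $k$ of its edges, it is significantly more restrictive than single-edge contraction, so realizing the structural minors from the Oporowski--Oxley--Thomas theorem as actual cc-minors requires careful selection of cycles that preserves $2$-connectedness while conforming to the target tree-decomposition shape. A related concern in case (b) is that cycle contractions in one piece of a $2$-sum may inadvertently modify adjacent pieces through the shared basepoint, so some care is needed to ensure that the reductions within each label do not disturb the overall path structure.
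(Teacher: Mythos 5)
Your skeleton matches the paper's: Tutte's tree decomposition, then a dichotomy between (a) a single large simple $3$-connected label and (b) a long path of bounded-size labels. But in both branches the decisive steps are named rather than proved, and they are where essentially all of the work lies. In case (a), passing from ``$G_v$ contains a subdivision of $W_k$, $V_k$, or $K_{3,k}$'' to ``$G_v$ has a large fan-type graph as a cc-minor'' is not a matter of exhibiting cycle contractions inside the wheel or inside $K_{3,k}$: the subdivision sits inside an arbitrary $3$-connected graph, and every vertex and edge of $G_v$ outside it must be absorbed by cycle contractions without destroying the target. The paper does this (Theorem~\ref{large_3_con}) by choosing a cycle $C$ and a $C$-island $I$ with many $C$-bridges, contracting all of $G_v-V(I)$ to a single vertex, reducing the island to a tree, pruning leaves via $3$-edge-connectivity, and applying a weighted-tree lemma to extract a bond or fan-type graph. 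You flag this as ``the principal obstacle'' but supply no argument for it. Also, you cannot literally ``cycle-contract away'' the other $2$-summands: contracting all of a summand identifies the two ends of its basepoint, turning that basepoint into a loop of $G_v$, which is then lost. The correct move (Lemma~\ref{2con_to_parallel}) is to reduce each summand to a parallel connection of cycles through its basepoint; re-attaching these is precisely what produces the ``parallel-path extension'' in the conclusion.

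In case (b), the assertion that ``each possible label can be reduced, via cycle contractions inside the label, to $K_3$, $B_3$, or $K_4$ with matching basepoints'' is the paper's Theorem~\ref{3-con cc_minor}: a simple $3$-connected graph with two designated edges $e$ and $f$ has a cc-minor containing both that is a bond graph, a fan-type graph with $e,f$ as non-parallel outer spokes, or a parallel extension of $K_4$ with $e,f$ non-adjacent. The placement of the two basepoints is essential for the $2$-sums to reassemble into a template, and the proof (type-A versus type-B $\Theta$-graphs, four-path connectors, absorption of extra vertices) occupies an entire section; the bounded size of the labels does not make it automatic. The concern you raise about contractions in one summand disturbing its neighbours is real and is resolved by Lemma~\ref{lem:2-sum of cc-minor}, which rests on the structural characterization of cc-minors (Proposition~\ref{prop:structure}); some such lemma must be stated and justified. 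Two further points: once every label has at most $f_{\ref{large_3_con}}(r+2)$ edges, $T$ has maximum degree at most $f_{\ref{large_3_con}}(r+2)$, so a long path exists by counting alone --- no branch consolidation and no Ramsey stabilization of label types is needed; and Theorem~\ref{main} places no restriction on adjacent $B_3$-labels or on $B_3$ at the ends of the path (that restriction belongs to Theorem~\ref{thm:unavoidable induced subgraph}), so your ``final cleanup'' --- which in any case could not \emph{insert} new $K_3$-parts by cc-minor operations --- addresses a requirement that is not there.
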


We note that, since cycle contraction does not preserve simplicity, we do not require a cc-minor to be simple. Consequently, for certain large graphs, such as large cliques, the unavoidable cc-minor in our list is a bond graph consisting of many parallel edges. As will be explained in Section~\ref{sec:pre}, a bond graph admits a tree decomposition whose underlying tree is a path, with each vertex labeled by $B_3$.

\subsection{A matroid perspective}

While the current paper focuses mainly on cc-minors of graphs, this notion can be naturally generalized to matroids. A reader who is only interested in graphs may ignore this subsection. A reader familiar with matroids will find that this subsection provides additional motivation for the material in this paper. While a comprehensive treatment of matroids may be found in~\cite{Oxl11}, we refer to~\cite{newOxl} for a concise overview of the definitions needed to understand the following section.

A matroid $N$ is a \textit{circuit-contraction minor} or \textit{cc-minor} of a matroid $M$ if there is a sequence $M_0, M_1, \dots, M_n$ of matroids such that $(M_0, M_n) = (M, N)$ and, for each $i \in \{1, 2, \dots, n\}$, there is a circuit $C_{i-1}$ of $M_{i-1}$ with $M_i = M_{i-1} / C_{i-1}$.

On the other hand, an \textit{induced restriction} of a matroid $M$ is a matroid $N$ obtained from $M$ by restricting to a flat. Not only do induced restrictions and circuit-contraction minors extend the notions of induced subgraphs and cycle-contraction minors to matroids, but the graph duality described in Lemma~\ref{cc} is, in fact, a special case of the following.  

\begin{proposition}
    A matroid $N$ is an induced restriction of a matroid $M$ if and only if $N^*$ is a cc-minor of $M^*$.
\end{proposition}

\begin{proof}
    Suppose $N$ is an induced restriction of $M$. Then there is a sequence $M_0, M_1, \dots, M_n$ of matroids such that $(M_0, M_n) = (M, N)$ and, for each $i \in \{1, 2, \dots, n\}$, the matroid $M_i$ is obtained from $M_{i-1}$ by restricting to a hyperplane $H_{i-1}$ of $M_{i-1}$. Taking duals, we obtain a sequence $M_0^*, M_1^*, \dots, M_n^*$ of matroids such that $(M_0^*, M_n^*) = (M^*, N^*)$ and, for each $i \in \{1, 2, \dots, n\}$, the matroid $M_i^*$ is obtained from $M_{i-1}^*$ by contracting the circuit $E(M_{i-1})-H_{i-1}$. Thus, $N^*$ is a cc-minor of $M^*$.  

    The converse follows by reversing the argument just given.
\end{proof}

One of our motivations is to extend Theorem~\ref{thm:unavoidable induced subgraph} to regular matroids by addressing the following.

\begin{Problem}\label{question:regular matroid}
     Find a set of unavoidable induced restrictions for sufficiently large $2$-connected simple regular matroids.
\end{Problem}

Seymour~\cite{Seymour1980} showed that every regular matroid can be constructed from graphic matroids, cographic matroids, and copies of the $10$-element matroid $R_{10}$. The allowable operations in this construction are direct sums, $2$-sums, and $3$-sums. While Theorem~\ref{thm:unavoidable induced subgraph} addresses the case in which a regular matroid has a large graphic matroid as an induced restriction, Theorem~\ref{main} addresses the case in which a large cographic matroid appears. Thus, Theorem~\ref{main} is not only the dual of Theorem~\ref{thm:unavoidable induced subgraph}, but also a crucial step toward solving Problem~\ref{question:regular matroid}.

Since Theorem~\ref{main} answers Problem~\ref{question:regular matroid} for cographic matroids by solving the dual problem for graphic matroids, we observe that the requirement that $M^*(G)$ be simple is equivalent to the requirement that the matroid $M^*(G)$ has no circuit of size less than three. Equivalently, the graph $G$ has no bond of size less than three, that is, $G$ is $3$-edge-connected. It is this equivalence that explains why Theorem~\ref{main} does not confine attention to simple graphs.

\section{Preliminaries}\label{sec:pre}
This section presents some definitions along with a proof of Lemma~\ref{cc}. Specifically, we define a class of graphs called $r$-templates, that have tree decompositions exhibiting a particular structure. Additionally, we motivate the study of cc-minors, showing how such minors relate to induced subgraphs and minors.

\subsection{$r$-templates}
Figure~\ref{fig:Sample decomp} provides an example of a 2-connected graph $G$ with a tree decomposition, as described in Theorem~\ref{thm:tree-decomp}. Although, for an arbitrary 2-connected graph $G$, such a graph-labeled tree may exhibit an arbitrary structure, we introduce a class of 2-connected graphs that have tree decompositions with specific, notable features.

\begin{figure}[htb]
\hbox to \hsize{
\hfil
\resizebox{8.5cm}{!}{\input{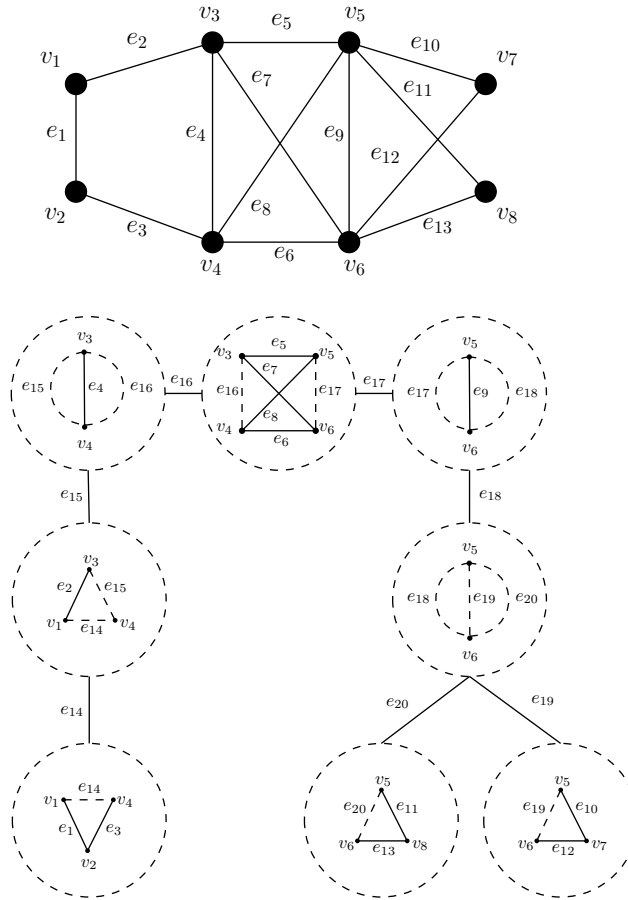}}%
\hfil
}
\caption{A $2$-connected graph $G$ and its tree decomposition}\label{fig:Sample decomp}
\end{figure}

A {\it fan graph} $F_n$ is a simple graph that is obtained from an $n$-vertex path $v_1v_2\dots v_n$ by joining each $v_i$ to a new vertex $u$. We call each $uv_i$ edge a {\it spoke}. In particular, each edge joining $u$ with $v_1$ or $v_n$ is an {\it outer spoke}. For positive integers $t_1,t_2,\dots, t_n$, we obtain a {\it fan-type  graph} $F_{t_1,t_2,\dots,t_n}$ by replacing each spoke $uv_i$ of $F_n$ by $t_i$ parallel edges. In Figure~\ref{fan examples}, we show three different examples of fan-type graphs. Note that bond graphs are considered to be fan-type graphs, since they arise from single-vertex paths.
\begin{center}
    \begin{figure}[htb]
    \hbox to \hsize{
	\hfil
	\resizebox{11cm}{!}{\input{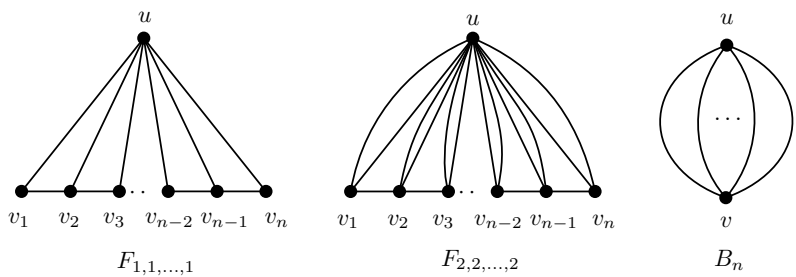}}%
	\hfil
    }
    \caption{Three examples of fan-type graphs}\label{fan examples}
\end{figure}
\end{center}

An \textit{$r$-template} is a $2$-connected graph $G$ that can be obtained from an $r$-vertex path $P_r$ by using the following operations.
\begin{itemize}
    \item[(\romannum{1})] Label each vertex of $P_r$ by $K_4$, $K_3$, or $B_3$. Such vertex labels are called {\it parts}.
    \item[(\romannum{2})] For each part, pick distinct basepoints, one for each of its adjacent parts. Moreover, if a part that is labeled by $K_4$ is adjacent to two other parts, then we always pick two non-adjacent edges as basepoints in that $K_4$.
    \item[(\romannum{3})] Apply $2$-sums across the specified basepoints.
\end{itemize}

Figure~\ref{Building block} shows the possible parts for templates, and Figure~\ref{template} shows the construction process of a sample $6$-template. The following are some special examples of templates.
\begin{figure}[htb]
    \hbox to \hsize{
	\hfil
	\resizebox{11cm}{!}{\tikzset{every picture/.style={line width=0.75pt}} 

\begin{tikzpicture}[x=0.75pt,y=0.75pt,yscale=-1,xscale=1]

\draw  [color={rgb, 255:red, 0; green, 0; blue, 0 }  ,draw opacity=1 ][dash pattern={on 4.5pt off 4.5pt}] (490.35,128.15) .. controls (490.35,89.63) and (521.58,58.4) .. (560.1,58.4) .. controls (598.63,58.4) and (629.86,89.63) .. (629.86,128.15) .. controls (629.86,166.68) and (598.63,197.9) .. (560.1,197.9) .. controls (521.58,197.9) and (490.35,166.68) .. (490.35,128.15) -- cycle ;
\draw [color={rgb, 255:red, 0; green, 0; blue, 0 }  ,draw opacity=1 ][dash pattern={on 4.5pt off 4.5pt}] (275.18,128.15) .. controls (275.18,89.63) and (306.41,58.4) .. (344.93,58.4) .. controls (383.46,58.4) and (414.68,89.63) .. (414.68,128.15) .. controls (414.68,166.68) and (383.46,197.9) .. (344.93,197.9) .. controls (306.41,197.9) and (275.18,166.68) .. (275.18,128.15) -- cycle ;
\draw  [color={rgb, 255:red, 0; green, 0; blue, 0 }  ,draw opacity=1 ][dash pattern={on 4.5pt off 4.5pt}] (60.02,128.15) .. controls (60.02,89.63) and (91.24,58.4) .. (129.77,58.4) .. controls (168.29,58.4) and (199.52,89.63) .. (199.52,128.15) .. controls (199.52,166.68) and (168.29,197.9) .. (129.77,197.9) .. controls (91.24,197.9) and (60.02,166.68) .. (60.02,128.15) -- cycle ;
\draw [color={rgb, 255:red, 0; green, 0; blue, 0 }  ,draw opacity=1 ] [dash pattern={on 4.5pt off 4.5pt}]  (93.7,91.58) -- (93.7,164.73) ;
\draw [color={rgb, 255:red, 0; green, 0; blue, 0 }  ,draw opacity=1 ] [dash pattern={on 4.5pt off 4.5pt}]  (165.83,91.58) -- (165.83,164.73) ;
\draw  [fill={rgb, 255:red, 0; green, 0; blue, 0 }  ,fill opacity=1 ] (91.24,91.58) .. controls (91.24,90.22) and (92.34,89.11) .. (93.7,89.11) .. controls (95.06,89.11) and (96.16,90.22) .. (96.16,91.58) .. controls (96.16,92.94) and (95.06,94.04) .. (93.7,94.04) .. controls (92.34,94.04) and (91.24,92.94) .. (91.24,91.58) -- cycle ;
\draw  [fill={rgb, 255:red, 0; green, 0; blue, 0 }  ,fill opacity=1 ] (91.24,164.73) .. controls (91.24,163.37) and (92.34,162.27) .. (93.7,162.27) .. controls (95.06,162.27) and (96.16,163.37) .. (96.16,164.73) .. controls (96.16,166.09) and (95.06,167.19) .. (93.7,167.19) .. controls (92.34,167.19) and (91.24,166.09) .. (91.24,164.73) -- cycle ;
\draw  [fill={rgb, 255:red, 0; green, 0; blue, 0 }  ,fill opacity=1 ] (163.37,91.58) .. controls (163.37,90.22) and (164.47,89.11) .. (165.83,89.11) .. controls (167.19,89.11) and (168.29,90.22) .. (168.29,91.58) .. controls (168.29,92.94) and (167.19,94.04) .. (165.83,94.04) .. controls (164.47,94.04) and (163.37,92.94) .. (163.37,91.58) -- cycle ;
\draw  [fill={rgb, 255:red, 0; green, 0; blue, 0 }  ,fill opacity=1 ] (163.37,164.73) .. controls (163.37,163.37) and (164.47,162.27) .. (165.83,162.27) .. controls (167.19,162.27) and (168.29,163.37) .. (168.29,164.73) .. controls (168.29,166.09) and (167.19,167.19) .. (165.83,167.19) .. controls (164.47,167.19) and (163.37,166.09) .. (163.37,164.73) -- cycle ;
\draw    (93.7,91.58) -- (165.83,91.58) ;
\draw    (93.7,164.73) -- (165.83,164.73) ;
\draw    (93.7,91.58) -- (165.83,164.73) ;
\draw    (93.7,164.73) -- (165.83,91.58) ;
\draw [color={rgb, 255:red, 0; green, 0; blue, 0 }  ,draw opacity=1 ] [dash pattern={on 4.5pt off 4.5pt}]  (344.35,88.49) -- (309.64,162.41) ;
\draw [color={rgb, 255:red, 0; green, 0; blue, 0 }  ,draw opacity=1 ] [dash pattern={on 4.5pt off 4.5pt}]  (344.35,88.49) -- (381.77,162.41) ;
\draw  [fill={rgb, 255:red, 0; green, 0; blue, 0 }  ,fill opacity=1 ] (341.89,88.49) .. controls (341.89,87.13) and (342.99,86.03) .. (344.35,86.03) .. controls (345.71,86.03) and (346.81,87.13) .. (346.81,88.49) .. controls (346.81,89.85) and (345.71,90.95) .. (344.35,90.95) .. controls (342.99,90.95) and (341.89,89.85) .. (341.89,88.49) -- cycle ;
\draw  [fill={rgb, 255:red, 0; green, 0; blue, 0 }  ,fill opacity=1 ] (307.18,162.41) .. controls (307.18,161.05) and (308.28,159.95) .. (309.64,159.95) .. controls (311,159.95) and (312.1,161.05) .. (312.1,162.41) .. controls (312.1,163.77) and (311,164.88) .. (309.64,164.88) .. controls (308.28,164.88) and (307.18,163.77) .. (307.18,162.41) -- cycle ;
\draw  [fill={rgb, 255:red, 0; green, 0; blue, 0 }  ,fill opacity=1 ] (379.31,162.41) .. controls (379.31,161.05) and (380.41,159.95) .. (381.77,159.95) .. controls (383.13,159.95) and (384.23,161.05) .. (384.23,162.41) .. controls (384.23,163.77) and (383.13,164.88) .. (381.77,164.88) .. controls (380.41,164.88) and (379.31,163.77) .. (379.31,162.41) -- cycle ;
\draw    (309.64,162.41) -- (381.77,162.41) ;
\draw [color={rgb, 255:red, 0; green, 0; blue, 0 }  ,draw opacity=1 ] [dash pattern={on 4.5pt off 4.5pt}]  (560.35,89.23) .. controls (539.99,114.01) and (541.8,146.59) .. (559.86,167.07) ;
\draw [color={rgb, 255:red, 0; green, 0; blue, 0 }  ,draw opacity=1 ] [dash pattern={on 4.5pt off 4.5pt}]  (559.86,167.07) .. controls (580.21,142.29) and (578.4,109.71) .. (560.35,89.23) ;
\draw    (560.35,89.23) -- (559.86,167.07) ;
\draw  [fill={rgb, 255:red, 0; green, 0; blue, 0 }  ,fill opacity=1 ] (557.46,89.23) .. controls (557.46,87.64) and (558.75,86.34) .. (560.35,86.34) .. controls (561.94,86.34) and (563.24,87.64) .. (563.24,89.23) .. controls (563.24,90.83) and (561.94,92.12) .. (560.35,92.12) .. controls (558.75,92.12) and (557.46,90.83) .. (557.46,89.23) -- cycle ;
\draw  [fill={rgb, 255:red, 0; green, 0; blue, 0 }  ,fill opacity=1 ] (556.97,167.07) .. controls (556.97,165.48) and (558.26,164.19) .. (559.86,164.19) .. controls (561.46,164.19) and (562.75,165.48) .. (562.75,167.07) .. controls (562.75,168.67) and (561.46,169.96) .. (559.86,169.96) .. controls (558.26,169.96) and (556.97,168.67) .. (556.97,167.07) -- cycle ;

\end{tikzpicture}}%
	\hfil
    }
    \caption{Possible parts of templates}\label{Building block}
\end{figure}
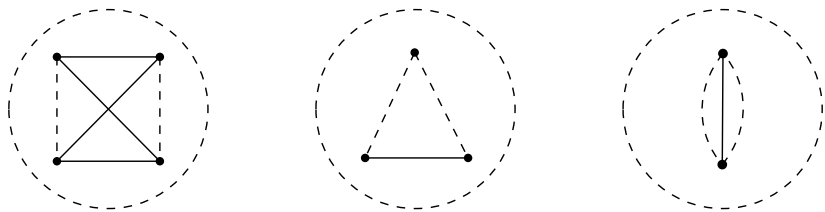

\begin{figure}[htb]
    \hbox to \hsize{
	\hfil
	\resizebox{11.5cm}{!}{\input{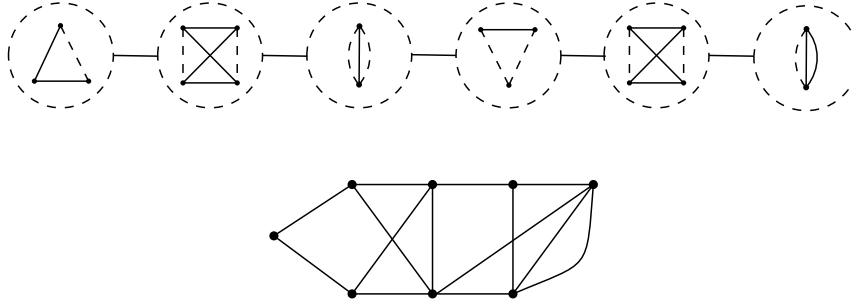}}%
	\hfil
    }
    \caption{A sample $6$-template}\label{template}
\end{figure}

\begin{itemize}
    \item[(\romannum{1})] A fan graph $F_n$ is a $(2n-3)$-template in which the parts alternate between $K_3$ and $B_3$, beginning and ending with $K_3$.
    \item[(\romannum{2})] A fan-type graph $F_{t_1,t_2,\dots,t_n}$ other than $B_2$ is a $((\sum_{i=1}^nt_i)+n-3)$-template.
    \item[(\romannum{3})] For $n\geq 3$, a bond graph $B_n$ is an $(n-2)$-template for which every part is a $B_3$.
    \item[(\romannum{4})] For $n\geq 3$, a cycle $C_n$ is an $(n-2)$-template  for which every part is a $K_3$.
\end{itemize}

The main result of this paper asserts that a set of unavoidable cc-minors of a sufficiently large 2-connected, loopless graph consists of parallel-path extensions of large templates. Before presenting the proofs, we provide motivation for studying cc-minors in the context of graph theory.

\subsection{cc-minors of graphs}

Graph relations, including induced subgraphs and minors, have garnered significant attention in various research areas. However, the study of cc-minors in graphs remains relatively unexplored. Although this relation is novel in certain respects, it has a strong connection to other graph relations. Here, we demonstrate how cc-minors relate to induced subgraphs through duality.\\

\noindent{\it Proof of Lemma~\ref{cc}.} First observe that, the graph $H$ is $2$-connected if and only if its dual $H^*$ is $2$-connected.

Now assume that $H$ is a 2-connected induced subgraph of $G$. Then $H$ can be obtained from $G$ by consecutively deleting some vertices $v_1,v_2,\dots,v_m$. Let $G_0=G$ and, for each $i\in\{1,2,\dots,m\}$, let $G_i=G_{i-1}-v_i$. Since $H$ is a connected subgraph of $G$, we may choose the deletion order so that, for each $i\in\{1,2,\dots,m\}$, the vertex $v_i$ is not a cut vertex of $G_{i-1}$ and therefore the set of edges of $G_{i-1}$ meeting $v_i$ is a bond $B_i$. As $B_i$ is a cycle in the dual of $G_{i-1}$, we see that $H^*=G_m^*=G^*/B_1/B_2/\dots/B_m$ is a cc-minor of $G^*$.

Conversely, assume that $H^*$ is a $2$-connected cc-minor of $G^*$. Let $Z=E(G^*)-E(H^*)$. Because $H^*$ is obtained from $G^*$ by repeatedly contracting cycles, we have $H^*=G^*/Z$. By the construction of $H^*$, every edge of $Z$ is in a cycle of $G^*$ that is contained in $Z$. Since $H^*$ is 2-connected, for each cycle $C$ in the plane graph $G^*$ such that $C\subseteq Z$, either all of the edges in the interior of $C$ are in $Z$ or all of the edges in the exterior of $C$ are in $Z$, but not both; otherwise $H^*$ would have a cut vertex. In the first case, we color the faces in the interior of $C$ red. In the second case, we color the faces in the exterior of $C$ red. In particular, if $F\subseteq Z$ and $F$ is a cycle bounding a face of $G^*$, then that face is colored red. Now, in $G$, consider the set $R$ of vertices that correspond to the red faces of $G^*$.  Because every edge in $Z$ is contained in a red face, contracting $Z$ in $G^*$ has the same effect as contracting all red faces. Moreover, the dual of contracting a face is deleting the corresponding vertex in the dual. Therefore, deleting the vertices of $R$ from $G$ gives the dual of the graph $H^*$.\qed\\

Rather than defining a graph minor through the local operations of deletion and contraction, it can also be characterized by its global structure.

\begin{proposition}
   A graph $G$ has a graph $H$ as a minor if and only if $G$ has a set $\{ G_v : v \in V(H) \}$ of disjoint connected subgraphs and a set $\{ f_e : e \in E(H) \}$ of distinct edges that is disjoint from $\cup_{v\in V(H)}E(G_v)$ such that, for every edge $e \in E(H)$ having ends $u$ and $v$, the ends of $f_e$ are contained in $G_u$ and $G_v$, respectively. 
\end{proposition}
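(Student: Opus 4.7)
The plan is to prove both implications directly by working with the sequence of deletions and contractions that characterizes a minor. For the forward direction, I would assume that $H$ is a minor of $G$, so that there is a sequence $G = G_0, G_1, \dots, G_k = H$ in which each $G_i$ is obtained from $G_{i-1}$ by deleting an isolated vertex, deleting an edge, or contracting an edge, and induct on $k$. In the base case $k=0$ we have $G=H$ and the model is trivial: take $G_v$ to be the subgraph $(\{v\},\emptyset)$ for each $v \in V(H)$ and $f_e = e$ for each $e \in E(H)$. For the inductive step, I would lift a model of $H$ in $G_1$ to one in $G_0$ by undoing the first operation. If the first operation deletes an isolated vertex or an edge, the same branch sets and distinguished edges live already in $G_0$ and still satisfy all the hypotheses. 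If the first operation contracts an edge $xy$ of $G_0$ to a new vertex $w$ of $G_1$, then in the unique branch set containing $w$ I would replace $w$ by the two vertices $x,y$ and add the edge $xy$ as an internal edge; the resulting subgraph is still connected, and the remaining branch sets and distinguished edges are unchanged, yielding a valid model of $H$ in $G_0$.

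For the backward direction, suppose a model $(\{G_v\}, \{f_e\})$ is given. The plan is to exhibit an explicit sequence of operations producing $H$ from $G$. First I would delete every vertex of $G$ that lies in no branch set $G_v$, then delete every edge of $G$ that is neither internal to some $G_v$ nor one of the distinguished edges $f_e$. Next, for each $v \in V(H)$, I would contract a spanning tree of the connected subgraph $G_v$, thereby collapsing $G_v$ to a single vertex labelled $v$, and delete the loops created by any remaining internal edges of $G_v$. The disjointness of the branch sets, the hypothesis that each $f_e$ with $e = uv$ has its ends in $G_u$ and $G_v$, and the distinctness of the $f_e$ together imply that the resulting graph has vertex set naturally identified with $V(H)$ and edge set naturally identified with $E(H)$, so the result is isomorphic to $H$.

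The main obstacle is essentially bookkeeping rather than any deep step: in the forward direction one has to check that the single modification of one branch set at each inductive step preserves connectivity and disjointness, and in the backward direction one has to check that the contractions identify the correct endpoints of the $f_e$ and do not merge two distinct $f_e$ into the same edge. Both verifications follow immediately from the disjointness and incidence conditions built into the model, so no further ingredients are required.
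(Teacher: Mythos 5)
Your argument is correct. Note, however, that the paper states this proposition without proof, treating it as the well-known ``branch set'' (or model) characterization of minors; there is therefore no proof in the paper to compare against. Your two directions are the standard ones: induction on the length of the deletion/contraction sequence for necessity, and an explicit sequence (delete unused vertices and edges, contract a spanning tree of each $G_v$, discard the resulting loops) for sufficiency. The only point you gloss over in the forward direction is the case where the vertex $w$ produced by contracting $xy$ lies in no branch set of the model of $H$ in $G_1$; there the model lifts to $G_0$ unchanged, so this is a trivial omission rather than a gap. It is worth observing that the paper's proof of the analogous statement for cc-minors (Proposition~\ref{prop:structure}) is structured quite differently: instead of inducting on an operation sequence, it reduces everything to the fact that a nontrivial graph has $K_1$ as a cc-minor if and only if it is $2$-edge-connected (Lemma~\ref{2con_to_k1}), which is why the branch sets there must be $2$-edge-connected or single vertices and must partition $V(G)$, in contrast to the connected, non-spanning branch sets in your setting.
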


We can characterize a cc-minor in a similar way. We note that, for the next proposition only, when a cycle contraction generates a loop, we do not require the resulting loop to be contracted.

\begin{proposition}\label{prop:structure}
A loopless graph $G$ contains a graph $H$ as a cc-minor if and only if $G$ has
\begin{enumerate}
    \item[(\romannum{1})] a collection $\{ G_v : v \in V(H) \}$ of disjoint subgraphs such that
    \begin{itemize}
        \item[(a)] each $G_v$ is either a $2$-edge-connected subgraph of $G$ or a single-vertex subgraph of $G$, and
        \item[(b)] $\bigcup_{v \in V(H)} V(G_v) = V(G)$;
    \end{itemize}
    and
    \item[(\romannum{2})] a set $\{ f_e : e \in E(H) \}$ of distinct edges in $G$ that is the complement of $\bigcup_{v \in V(H)} E(G_v)$ such that, for every edge $e \in E(H)$ with endpoints $u$ and $v$, the endpoints of $f_e$ lie in $G_u$ and $G_v$, respectively.
\end{enumerate}
\end{proposition}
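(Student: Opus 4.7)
The plan is to prove necessity by induction on the number of cycle contractions and sufficiency by explicitly reducing each $G_v$ to a single vertex via cycle contractions. For necessity, I would induct on $m$, the length of a sequence $G = J_0, J_1, \ldots, J_m = H$ exhibiting $H$ as a cc-minor of $G$. The base case $m=0$ uses $G_v = \{v\}$ and $f_e = e$. For the inductive step, let $J = J_{m-1}$ have decomposition $\{G'_w\}, \{f'_{e'}\}$ by hypothesis, and let $C$ be the cycle of $J$ contracted at step $m$, with cyclic vertices $w_1, \ldots, w_k$ and cycle edges $e'_1, \ldots, e'_k$; these identify to a single vertex $v^* \in V(H)$. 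Set $G_w := G'_w$ for $w \neq v^*$, and let $G_{v^*}$ be the union $\bigcup_i G'_{w_i}$ together with every edge $f'_{e'}$ whose $e' \in E(J)$ has both endpoints in $V(C)$ (equivalently, the edges of $C$ and the chords of $C$, which are precisely the edges of $J$ that disappear when $C$ is contracted and the resulting loops are deleted). For each $e \in E(H)$ set $f_e := f'_{e'}$ with $e'$ the corresponding surviving edge of $J$. Disjointness, covering of $V(G)$, distinctness of the $f_e$, and the endpoint condition all follow routinely from the inductive hypothesis.

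The crux of the forward direction is verifying that $G_{v^*}$ is 2-edge-connected. I would argue directly: each $G'_{w_i}$ is 2-edge-connected or a single vertex, and the edges $f'_{e'_1}, \ldots, f'_{e'_k}$ together with internal $G'_{w_i}$-paths furnish a cycle in $G_{v^*}$ that visits every $G'_{w_i}$. A case check on the deleted edge $e$ --- internal to some $G'_{w_i}$, a cycle edge $f'_{e'_j}$, or a chord edge --- shows $G_{v^*} - e$ remains connected in each case. Since each $J_i$ is loopless by the convention on loop removal, $C$ has at least two vertices, so $G_{v^*}$ has at least two vertices and is genuinely 2-edge-connected rather than trivial.

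For sufficiency, I would reduce each $G_v$ with $|V(G_v)| \ge 2$ to a single vertex by cycle contractions performed entirely inside $G_v$. Since every $f_e$ has endpoints in two distinct $G_u, G_w$ (as $H$ is loopless), no such contraction can turn an $f_e$ into a loop; the $f_e$'s persist throughout, with only their endpoints collapsing to the correct merged vertex. The reduction inside $G_v$ rests on the following sub-lemma: \emph{every loopless 2-edge-connected graph can be reduced to a single-vertex graph by a sequence of cycle contractions with loop removal}. I would prove this by induction on vertex count: pick any cycle $C'$ of the current 2-edge-connected graph $K$ (one exists since $K$ is bridgeless), contract it, and verify that $K/C'$ is either a single vertex or again 2-edge-connected. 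The latter holds because, for any edge $e$ of $K/C'$ lifted to $e' \in E(K)$, the graph $K - e'$ is connected by 2-edge-connectivity of $K$, so $K/C' - e = (K - e')/C'$ is connected.

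The main obstacle in both directions is the 2-edge-connectivity bookkeeping: showing that $G_{v^*}$ is 2-edge-connected in the forward direction, and that $K/C'$ remains 2-edge-connected in the sub-lemma. Both arguments reduce to showing that an appropriately chosen cycle survives a single edge deletion, and once these are in place the remaining conditions in the proposition follow by straightforward checks propagating through the induction.
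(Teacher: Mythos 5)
Your proposal is correct. The sufficiency direction coincides with the paper's: the paper invokes Lemma~\ref{2con_to_k1}(\romannum{1}) and (\romannum{3}) (a $2$-edge-connected graph collapses to $K_1$ under cycle contractions, each intermediate graph staying $2$-edge-connected), which is exactly the sub-lemma you state and prove, and then collapses each $G_v$ in place, observing as you do that no $f_e$ can become a loop. The necessity direction is where you genuinely diverge. The paper argues top-down: it sets $Z$ equal to the full set of contracted edges, lets $J=G[Z]$, notes each component of $J$ has at least two vertices and has $K_1$ as a cc-minor, and then applies Lemma~\ref{2con_to_k1}(\romannum{4}) (a graph with a cut edge can never be contracted to $K_1$) to conclude each component is $2$-edge-connected --- three lines, at the cost of leaning on that lemma and leaving implicit why each component admits $K_1$ as a cc-minor. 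You instead run a bottom-up induction on the contraction sequence, explicitly merging the parts $G'_{w_1},\dots,G'_{w_k}$ lying on the contracted cycle together with the cycle edges and chords, and verifying $2$-edge-connectivity of the merged part directly by exhibiting a spanning cycle through all the pieces and checking edge deletions case by case. Your route is longer but self-contained and makes the correspondence between contraction steps and the evolving vertex decomposition completely explicit, which the paper's proof glosses over; the paper's route is shorter because it recycles machinery already established for Lemma~\ref{2con_to_k1}. Both arguments are sound.
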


The proof of Proposition~\ref{prop:structure} is deferred to Section~\ref{2-con}.

\section{Internally disjoint $XY$-paths}\label{lemmas}

Let $G$ be a graph and $X,Y$ be two disjoint set of vertices of $G$. An \textit{$XY$-path} $P$ is an $xy$-path such that there are vertices $x$ and $y$ for which $V(P)\cap X=\{x\}$ and $V(P)\cap Y=\{y\}$. When $H$ and $K$ are disjoint subgraphs of $G$, a $V(H)V(K)$-path will sometimes be called an {\it $HK$-path}. Two $XY$-paths $P_1$ and $P_2$ are {\it internally disjoint} if $(V(P_1)\cap V(P_2))-X-Y=\emptyset$.

\begin{lemma}\label{xy-path}
    Let $G$ be a graph, and let $X$ and $Y$ be two disjoint sets of vertices in $G$. For a cycle $C$ in $G$ such that $V(C) \cap X \neq \emptyset$ and $V(C) \cap Y = \emptyset$, define $X' = V(G[X \cup V(C)]/C)$. If there are $k$ internally disjoint $XY$-paths in $G$, then there are at least $k$ internally disjoint $X'Y$-paths in $G/C$.
\end{lemma}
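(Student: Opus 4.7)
The plan is to produce, from each of the $k$ given internally disjoint $XY$-paths $P_1,\dots,P_k$, a corresponding $X'Y$-path $P_i'$ in $G/C$ by truncating each $P_i$ at its last meeting with $V(C)$, so that the contraction of $C$ does exactly one thing to the truncated path: it relabels a single endpoint as the contracted vertex $c$.

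First, I would fix the construction. For each $i$, orient $P_i$ from its $X$-endpoint $x_i$ to its $Y$-endpoint $y_i$. If $V(P_i)\cap V(C)=\emptyset$, let $P_i'$ be the image of $P_i$ in $G/C$; contraction of $C$ does not touch any vertex of $P_i$, so $P_i'$ is still a path from $x_i$ to $y_i$, and $x_i\in X\setminus V(C)\subseteq X'$. Otherwise, let $v_i$ be the \emph{last} vertex of $P_i$ that lies in $V(C)$, and let $P_i'$ be the image under contraction of the subpath $P_i[v_i,y_i]$. By the choice of $v_i$, this subpath meets $V(C)$ only at $v_i$, so under contraction exactly one vertex gets identified with $c$; the result is a genuine path in $G/C$ from $c\in X'$ to $y_i\in Y$.

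Second, I would check that each $P_i'$ is an $X'Y$-path in $G/C$, i.e.\ it meets $X'$ only at its start and $Y$ only at its end. Recall $X'=(X\setminus V(C))\cup\{c\}$. The only vertex of $P_i$ in $X$ is $x_i$, and $x_i$ is either absorbed into $c$ (if $x_i\in V(C)$) or else lies strictly before $v_i$ on $P_i$ and is therefore not on $P_i[v_i,y_i]$; in either case no interior vertex of $P_i'$ lies in $X\setminus V(C)$. Also, no interior vertex of $P_i[v_i,y_i]$ lies in $V(C)$, so none is relabeled $c$. Finally, $Y$ meets $P_i$ only at $y_i$, the end of $P_i'$.

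Third, I would verify internal disjointness. The contraction map is a bijection between $V(G)\setminus V(C)$ and $V(G/C)\setminus\{c\}$, so any interior vertex of $P_i'$ corresponds to a unique interior vertex of $P_i$ lying outside $V(C)$. Since the original paths $P_1,\dots,P_k$ were internally disjoint in $G$, the interior vertices of $P_i$ and $P_j$ (for $i\neq j$) are disjoint, and hence so are the interior vertices of $P_i'$ and $P_j'$ in $G/C$. This yields the required $k$ internally disjoint $X'Y$-paths.

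The only real obstacle is bookkeeping: one must be careful about the several sub-cases (whether $x_i\in V(C)$, whether $P_i$ meets $V(C)$ at all, whether two different $P_i$'s share the endpoint $c$ after contraction). The idea that tames this is choosing $v_i$ to be the \emph{last} vertex of $V(C)$ on $P_i$, which guarantees that the truncated subpath $P_i[v_i,y_i]$ meets $V(C)$ exactly once, so contraction neither destroys the path structure nor introduces spurious intersections with $X'$.
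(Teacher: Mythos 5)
Your proof is correct and is in substance the same as the paper's. The paper reduces to contracting one edge of $C$ at a time (its sublemma handles a single edge contraction, truncating the unique affected path at the newly absorbed vertex), and iterating this over all edges of $C$ produces exactly your one-shot construction: each path gets truncated at its last vertex in $V(C)$, after which the disjointness check is identical.
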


\begin{proof}
    Let $P_1, P_2, \dots, P_k$ be $k$ internally disjoint $XY$-paths in $G$. Since contracting $C$ can be achieved by repeatedly contracting single edges of $C$, it suffices to establish the following assertion.

    \begin{sublemma}\label{repeatedly_contract}
        If $G' = G/e$ where $e$ joins $u$ and $v$, with $\{u,v\} \cap X \neq \emptyset$ and $\{u,v\} \cap Y = \emptyset$, then $G'$ contains $k$ internally disjoint $X'Y$-paths, where $X' = V(G[X \cup \{u,v\}]/e)$.
    \end{sublemma}

    If $\{u,v\} \subseteq X$ or if $u \in X$ and $v \notin V(P_i)$ for all $i \in \{1,2,\dots,k\}$, then $P_1, P_2, \dots, P_k$ are internally disjoint $X'Y$-paths in $G'$. Now, assume $u \in X$ and $v \in V\left(\bigcup_{i \in \{1,2,\dots,k\}} P_i\right) - X$. Since $P_1, P_2, \dots, P_k$ are internally disjoint and $v \notin Y$, there is exactly one path, say $P_1$, containing $v$. Let $y$ denote the unique vertex in $Y \cap V(P_1)$, and let $P_1'$ be the $yv$-subpath of $P_1$. Then $P_1', P_2, \dots, P_k$ form $k$ internally disjoint $X'Y$-paths in $G'$. This completes the proof of ~\ref{repeatedly_contract}, and the lemma follows.
\end{proof}

\begin{lemma}\label{k-edge-connected}
    For a positive integer $k$, let $G$ be a $k$-edge-connected graph, and let $H$ be a cc-minor of $G$ with at least two vertices. Then $H$ is $k$-edge-connected.
\end{lemma}

\begin{proof}
    It suffices to show that if $e$ is an edge of a graph $G$ with at least three vertices, then $G/e$ is $k$-edge-connected whenever $G$ is $k$-edge-connected. This is an immediate consequence of the fact that every bond of $G/e$ is also a bond of $G$.
\end{proof}


\section{cc-minors of $2$-edge-connected graphs}\label{2-con}

In this section, we determine a set of unavoidable cc-minors in $2$-edge-connected graphs.

\begin{lemma}\label{2con_to_k1}
    Let $G$ be a graph. Then the following hold.
    \begin{itemize}
        \item[(\romannum{1})] If $G$ is $2$-edge-connected and $C$ is a cycle of $G$, then $G/C$ is either a single-vertex graph or is $2$-edge-connected.
        \item[(\romannum{2})] If $G$ is not $2$-edge-connected and $C$ is a cycle of $G$, then $G/C$ is not $2$-edge-connected.
        \item[(\romannum{3})] If $G$ is $2$-edge-connected, then $G/E(G)$, which is isomorphic to $K_1$, is a cc-minor of $G$. 
        \item[(\romannum{4})] If $|V(G)|\geq 2$ and $K_1$ is a cc-minor of $G$, then $G$ is $2$-edge-connected. 
    \end{itemize}
\end{lemma}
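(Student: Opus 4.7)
My plan is to prove all four parts by leveraging the observation---already used in the proof of Corollary~\ref{k-edge-connected}---that every bond of $G/C$ is also a bond of $G$, together with the elementary fact that a cycle lies in a single connected component of any graph containing it. Part (i) then follows immediately from Corollary~\ref{k-edge-connected} applied with $k=2$: if $G/C$ has at least two vertices it is $2$-edge-connected, and otherwise $V(G)=V(C)$ and $G/C$ is a single-vertex graph.

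For (ii), I would split on why $G$ fails to be $2$-edge-connected. If $|V(G)|\leq 1$, then every cycle of $G$ is a loop and $|V(G/C)|\leq 1$ as well. If $G$ is disconnected, then $C$ lies inside a single component, so $G/C$ remains disconnected. The substantive case is when $G$ has a bridge $e$: then $e\notin E(C)$ since bridges lie on no cycle, and $e$ cannot have both endpoints in $V(C)$ either---otherwise combining $e$ with one of the two arcs of $C$ would produce a cycle through $e$, contradicting the fact that $e$ is a bridge. Hence $e$ remains a non-loop edge of $G/C$; moreover, since $C$ is contained in one component of $G-e$, removing $e$ from $G/C$ still disconnects the graph, so $e$ is still a bridge. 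This persistence of bridges under cycle contraction is the one step I expect to require the most care.

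For (iii), I would build the cc-minor sequence greedily. Since $G$ is $2$-edge-connected with at least two vertices, every edge lies on a cycle, so $G$ contains a cycle $C_0$; set $J_1=G/C_0$. By (i), $J_1$ is either $K_1$ (in which case we stop) or $2$-edge-connected (in which case we iterate). Because $|E(J_i)|$ strictly decreases at each step and any $2$-edge-connected graph on at least two vertices contains a cycle, the process must terminate, and by (i) the terminal graph is $K_1$. For (iv), I would argue the contrapositive: if $|V(G)|\geq 2$ and $G$ is not $2$-edge-connected, then $G$ is either disconnected or contains a bridge. By the analysis in (ii), both obstructions are preserved under every cycle contraction, so the last term $J_m$ of any cc-minor sequence for $G$ is still disconnected or still contains a bridge---in either case $J_m\neq K_1$, contradicting the cc-minor hypothesis.
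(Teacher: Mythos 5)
Your proof is correct and follows essentially the same route as the paper's: part (i) via Corollary~\ref{k-edge-connected} with $k=2$, part (iii) by greedily contracting a cycle through a remaining edge until no edges are left, and parts (ii) and (iv) by showing that the obstructions to $2$-edge-connectedness persist under cycle contraction. If anything you are slightly more thorough than the paper, whose argument for (ii) and (iv) treats only the single-vertex and cut-edge cases and passes over the case of a disconnected graph with no bridge; your observation that a contracted cycle lies within a single component, so disconnectedness is preserved, covers that case explicitly.
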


\begin{proof}
   If $G$ is $2$-edge-connected, then it has no cut edge. By Lemma~\ref{k-edge-connected}, each cc-minor of $G$ also has no cut edge, thus proving~(\romannum{1}). 
   
   If $G$ is not $2$-edge-connected, then either $G$ is a single-vertex graph and all the cycles of $G$ are loops, or $G$ has a cut edge $e$. In either case, if $C$ is a cycle of $G$, then $G/C$ is not $2$-edge-connected, which proves~(\romannum{2}). 
   
   Suppose that $G$ is $2$-edge-connected and define $G_0 = G$. If $G_i$ contains an edge $e$, then $G_i$ has a cycle $C_i$ that contains $e$. Let $G_{i+1} = G_i / C_i$. This process generates a sequence $G_0, G_1, \dots, G_q$ such that $G_q$ is connected and $E(G_q) = \emptyset$. Hence $G_q \cong K_1$, proving~(\romannum{3}).  
   
   If $G$ has a cut edge, then, by Lemma~\ref{k-edge-connected}, every cc-minor of $G$ will also have a cut edge, and thus it can never be isomorphic to $K_1$. Thus (\romannum{4}) holds.
\end{proof}
Using Lemma~\ref{2con_to_k1}, we define the operation of {\it contracting a $2$-edge-connect\-ed subgraph} $F$ of $G$ as performing a sequence of cycle contractions equivalent to contracting all of the edges in $F$. 

\begin{corollary}\label{cor:contracting subgraph}
    If $F$ is a $2$-edge-connected subgraph of $G$, then $G/E(F)$ is a cc-minor of $G$.
\end{corollary}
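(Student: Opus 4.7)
The plan is to apply Lemma~\ref{2con_to_k1}(iii) to the subgraph $F$ itself, and then lift the resulting cycle-contraction sequence from $F$ up to $G$. Since $F$ is $2$-edge-connected, Lemma~\ref{2con_to_k1}(iii) guarantees a sequence of cycles $C_0, C_1, \dots, C_{q-1}$ such that, setting $F_0 = F$ and $F_{i+1} = F_i/C_i$, we arrive at $F_q \cong K_1$. In particular, every edge of $F$ gets contracted, and each $C_i$ is a cycle of $F_i$.

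The key step is to perform exactly the same sequence of contractions in $G$. Define $G_0 = G$ and $G_{i+1} = G_i/C_i$. The main claim to verify by induction on $i$ is that $F_i$ is a subgraph of $G_i$; this is immediate at $i=0$, and the inductive step follows because we apply the identical cycle contraction to both $F_i$ and $G_i$ (together with the loop contractions mandated by the cc-minor convention). Once this invariant is established, each $C_i$ is a cycle of $F_i$ and hence a cycle of $G_i$, so the sequence $G_0, G_1, \dots, G_q$ is a valid cc-minor sequence in $G$. The terminal graph $G_q$ has all of $F$ collapsed to a single vertex and all edges of $E(F)$ removed, so $G_q = G/E(F)$.

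The one subtlety I would flag as the main obstacle is bookkeeping around loops created outside of $F$. When a cycle $C_i \subseteq E(F)$ is contracted in $G_i$, an edge of $E(G)\setminus E(F)$ whose endpoints both lie in $V(C_i)$ becomes a loop, and our convention then contracts it. One must check that this is consistent with what $G/E(F)$ means in the statement: since the operation of contracting a $2$-edge-connected subgraph was explicitly defined just before the corollary as the iterated cycle contraction (with accompanying loop contraction), no ambiguity arises, and the terminal graph $G_q$ is precisely the graph denoted $G/E(F)$. Thus $G/E(F)$ is a cc-minor of $G$, completing the argument.
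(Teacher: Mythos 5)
Your proposal is correct and follows essentially the same route as the paper, which presents Corollary~\ref{cor:contracting subgraph} as an immediate consequence of Lemma~\ref{2con_to_k1}(\romannum{3}) together with the definition of contracting a $2$-edge-connected subgraph. You merely spell out the lifting of the cycle-contraction sequence from $F$ to $G$ (and the loop bookkeeping) that the paper leaves implicit.
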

Next we prove a characterization of cc-minors.\\

\noindent{\textit{Proof of Proposition}~\ref{prop:structure}.} If (\romannum{1}) and (\romannum{2}) hold, then, by Lemma~\ref{2con_to_k1}(\romannum{3}), $G$ has $H$ as a cc-minor. Note that $H=G/\big(\cup_{v\in V(H)}E(G_v)\big)$. To prove the converse, suppose that $H=G/\{e_1,e_2,\dots,e_k\}$. Let $J$ be the subgraph of $G$ induced by the set $\{e_1,e_2,\dots,e_k\}$ of edges. It suffices to show that each component of $J$ is $2$-edge-connected. Since none of $e_1,e_2,\dots,e_k$ is a loop, each component of $J$ has at least two vertices. Moreover, each component has $K_1$ as a cc-minor and hence, by Lemma~\ref{2con_to_k1}(\romannum{4}), is $2$-edge-connected.\qed\\

The next lemma identifies a set of unavoidable cc-minors in $2$-connected graphs when preserving a specified edge. Let $G_1$ and $G_2$ be graphs such that $V(G_1) \cap V(G_2) = \{u, v\}$ and $E(G_1) \cap E(G_2) = \{e\}$, where $e$ is neither a loop nor a cut edge in $G_1$ or $G_2$. Recall that the graph $G_1 \cup G_2$ is the parallel connection of $G_1$ and $G_2$ with basepoint $e$. More generally, let $G_1, G_2, \dots, G_n$ be a collection of graphs such that, for all distinct $i$ and $j$ in $\{1,2,\dots,n\}$, we have $V(G_i) \cap V(G_j) = \{u, v\}$ and $E(G_i) \cap E(G_j) = \{e\}$, where $e$ is neither a loop nor a cut edge in any $G_k$. The union $G_1 \cup G_2 \cup \dots \cup G_n$ is called the {\it parallel connection} of $G_1, G_2, \dots, G_n$ with basepoint $e$. If $v$ is a vertex of degree two in a graph $G$ and $v$ does not meet a loop of $G$, then, by {\it suppressing v}, we mean deleting $v$ and adding an edge between its two neighbors. 

\begin{lemma}\label{2con_to_parallel}
    Let $G$ be a $2$-edge-connected graph, and let $e$ be a non-loop edge of $G$. Then $G$ has a cc-minor $H$ that is the parallel connection, with basepoint $e$, of a collection of cycles containing $e$.
\end{lemma}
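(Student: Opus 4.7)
The plan is to prove this by induction on $|V(G)|$, writing $e=uv$.

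For the base case $|V(G)|=2$, the graph $G$ is a bond $B_m$ with $m\geq 2$, which is itself the parallel connection, with basepoint $e$, of the $m-1$ two-cycles consisting of $e$ together with each other parallel edge; take $H=G$.

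For the inductive step, I would split into two cases. In the first, there exists a cycle $C$ of $G$ with $e\notin E(C)$ and $\{u,v\}\not\subseteq V(C)$. Then $G/C$ retains $e$ as a non-loop edge (since at least one of $u,v$ is not identified with the contracted vertex), has at least two vertices, is $2$-edge-connected by Lemma~\ref{2con_to_k1}(i), and has strictly fewer vertices than $G$; the induction hypothesis gives the desired cc-minor of $G/C$, which is also a cc-minor of $G$. In the second case, every cycle of $G$ contains $e$ or contains both $u$ and $v$, and I would show that $G$ itself is already a parallel connection of cycles through $e$, so $H=G$ suffices.

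The structural analysis in the second case is the main obstacle. Every cycle of $G\setminus e$ avoids $e$ and so contains both $u$ and $v$. Consequently every $2$-connected block of $G\setminus e$ contains $\{u,v\}$, so there is at most one such block since distinct blocks share at most one vertex. Any cut edge $f$ of $G\setminus e$ must separate $u$ from $v$ in $G\setminus e$: otherwise restoring $e$ would not reconnect the two sides, leaving $f$ a cut edge of $G$ and contradicting $2$-edge-connectedness. But no cut edge outside a $2$-connected block containing $\{u,v\}$ can separate $u$ from $v$. Hence $G\setminus e$ is either a tree (in which case $2$-edge-connectedness of $G$ forces $G$ to be a single cycle through $e$) or a single $2$-connected block $B$.

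In the block case, I would verify that $B$ is a union of internally disjoint $uv$-paths via an open ear decomposition starting from a cycle through $u$ and $v$. If a subsequent ear $P$ with endpoints $x,y$ in the current subgraph $G_i$ satisfied $u\notin\{x,y\}$, then the $2$-vertex-connectedness of $G_i$ would provide an $xy$-path $Q$ in $G_i\setminus u$, making $P\cup Q$ a cycle of $B$ avoiding $u$ and contradicting the case assumption; the symmetric argument handles $v\notin\{x,y\}$. Hence every ear joins $u$ and $v$, so $B$ is a union of internally disjoint $uv$-paths, and $G=B\cup\{e\}$ is the desired parallel connection of cycles through $e$.
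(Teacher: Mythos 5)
Your proposal is correct, and its reduction step is the same as the paper's: repeatedly contract cycles that fail to contain both ends $u,v$ of $e$ until every remaining cycle contains both (your Case~1 is exactly this, packaged as induction). Where you genuinely diverge is in analyzing the terminal graph. The paper argues directly that every vertex $w\notin\{u,v\}$ has degree two --- if $d(w)\geq 3$ it builds, from an edge at $w$ outside a cycle through $u$ and $v$ and a shortest path back to that cycle, a new cycle missing $u$ or $v$ --- and then suppresses degree-two vertices to obtain $B_n$. You instead delete $e$, observe that every $2$-connected block of $G\setminus e$ must contain both $u$ and $v$ (hence there is at most one) and that every cut edge of $G\setminus e$ would have to separate $u$ from $v$ (hence there are none once a block containing both exists), concluding that $G\setminus e$ is a path or a single block; an open ear decomposition anchored at a cycle through $u$ and $v$ then forces every ear to be a $uv$-path. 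Both endgames are sound; the paper's is more elementary and self-contained, while yours leans on standard block and ear-decomposition machinery in exchange for a cleaner structural picture of $G\setminus e$.

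One technical point to tidy up: the lemma does not assume $G$ is loopless, and the paper treats a loop as a cycle. A loop at any vertex falls into your Case~1 (it cannot contain both $u$ and $v$), but contracting it does not reduce $|V(G)|$, so your induction measure does not decrease; likewise your base case asserts that a $2$-edge-connected graph on two vertices is a bond, which fails if loops are present. This is easily repaired --- induct on $|E(G)|$ (or on $|V(G)|+|E(G)|$), or contract all loops at the outset --- but as written the induction is not well-founded in the presence of loops.
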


\begin{proof}
    Let $G_0 = G$ and let $e = uv$. If $G_i$ has a cycle $C_i$ that does not contain both $u$ and $v$, define $G_{i+1} = G_i / C_i$. Repeating this process generates a sequence $G_0, G_1, \dots, G_k$ of graphs such that $u$ and $v$ remain distinct vertices in $G_k$, and every cycle in $G_k$ contains both $u$ and $v$. We will show that $G_k$ is a parallel connection, with basepoint $e$, of a collection of cycles containing $e$.
    
    Let $w$ be a vertex of $G_k$ that is not in $\{u,v\}$. We first prove that $d(w) = 2$. By Lemma~\ref{k-edge-connected}, $G_k$ is $2$-edge-connected, so $d(w) \geq 2$. Now, $w$ does not meet a loop of $G_k$. Let $g$ be an edge meeting $w$. Since $g$ is not a cut edge of $G_k$, there is a cycle $C$ in $G_k$ containing $g$, and $C$ must contain both $u$ and $v$. Suppose $d(w) \geq 3$. Then there is an edge $f$ that meets $w$ but is not in $C$. Let $x$ be the other endpoint of $f$. By the choice of $G_k$, we see that $x\notin V(C)$. As $G_k$ is $2$-edge-connected, $G_k - f$ is connected. Choose $P$ as a shortest path in $G_k - f$ from $x$ to a vertex in $C$. Let $P^+$ be the path in $G_k$ that consists of $P$ and the edge $f$. Note that $E(P^+) \cap E(C) = \emptyset$ and $V(P^+) \cap V(C) = \{w, y\}$ for some $y \in V(C)$. Since $P^+$ cannot contain both $u$ and $v$, there is a cycle in $G_k$ that does not contain both $u$ and $v$, a contradiction. Hence $d(w)=2$.

    Let $A$ be the set of degree-$2$ vertices in $G_k$. If we suppress $A-\{u, v\}$, then for some $n\geq 2$, the resulting graph will be a bond graph $B_n$ with vertex set $\{u,v\}$. Hence, $G_k$ is the parallel connection, with basepoint $e$, of a collection of $n-1$ cycles containing $e$.
\end{proof}

Combining Lemma~\ref{k-edge-connected} and Lemma~\ref{2con_to_parallel}, we obtain the following result.
\begin{corollary}\label{cor:3-edge-con to parallel}
    Let $G$ be a $3$-edge-connected graph, and let $e$ be a non-loop edge of $G$. Then $G$ has a cc-minor $H$ that is a bond graph $B_n$ containing $e$, for some $n \geq 3$.
\end{corollary}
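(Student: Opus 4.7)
The plan is to combine Lemma~\ref{2con_to_parallel} with Corollary~\ref{k-edge-connected} in a straightforward way. Since $G$ is in particular $2$-edge-connected, Lemma~\ref{2con_to_parallel} yields a cc-minor $H$ of $G$ that is the parallel connection, with basepoint $e$, of some collection $C_1,C_2,\dots,C_m$ of cycles through $e$. Writing $e = uv$, the vertex set of $H$ is $\{u,v\}$ together with the internal vertices of the $C_i$, so $|V(H)|\geq 2$. Corollary~\ref{k-edge-connected} therefore applies and tells us that $H$ is $3$-edge-connected.

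Next I would use this $3$-edge-connectivity twice. First, every vertex of a $3$-edge-connected graph has degree at least $3$. But any internal vertex $w$ of a cycle $C_i$ (one in $V(C_i)-\{u,v\}$) is a vertex of $H$ of degree exactly $2$. Hence no $C_i$ has an internal vertex, which forces each $C_i$ to be a cycle of length $2$ consisting of $e$ and one edge parallel to $e$ between $u$ and $v$. Consequently $H$ is precisely the bond graph $B_{m+1}$, with $e$ as one of its $m+1$ parallel edges.

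Second, the edge-connectivity of $B_{n}$ is equal to $n$, since deleting all $n$ parallel edges is required to disconnect it. Since $H \cong B_{m+1}$ is $3$-edge-connected, we must have $m+1\geq 3$, so $H$ is a bond graph $B_n$ containing $e$ with $n\geq 3$, as required. There is no real obstacle here beyond being careful that $H$ has at least two vertices so that Corollary~\ref{k-edge-connected} applies; the rest is essentially reading off the structural consequences of minimum degree and of the edge-connectivity of a bond graph.
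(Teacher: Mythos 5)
Your argument is correct and is exactly the paper's intended proof: the paper derives this corollary simply by "combining Corollary~\ref{k-edge-connected} and Lemma~\ref{2con_to_parallel}," and you have supplied precisely the details that combination requires (degree-$2$ internal vertices are impossible, so every cycle is a $2$-cycle, and $3$-edge-connectivity of $B_n$ forces $n\geq 3$). No issues.
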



\section{Classes closed under cc-minors}\label{sec:classes closed under cc-minor}
Let $\mathcal{F}_1$ be the class of loopless connected graphs. For each positive integer $k > 1$, let $\mathcal{F}_k$ be the class consisting of all loopless $k$-edge-connected graphs along with the single-vertex graph $K_1$. The next proposition follows immediately from Lemma~\ref{k-edge-connected} and Lemma~\ref{2con_to_k1}(\romannum{1}).

\begin{proposition}
    For every positive integer $k$, the class $\mathcal{F}_k$ is closed under cycle contraction.
\end{proposition}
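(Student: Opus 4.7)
The plan is to verify, for each positive integer $k$, that if $G \in \mathcal{F}_k$ and $C$ is a cycle of $G$, then $G/C \in \mathcal{F}_k$; the cases $k=1$ and $k \geq 2$ are handled separately. Under the convention stated in the introduction, every cycle contraction is accompanied by the contraction of any loops it creates, so after each such step the current graph is automatically loopless. This disposes of the losslessness requirement uniformly and reduces the problem to verifying the connectivity conditions.

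For $k = 1$, the verification is immediate: since $C$ is a connected subgraph of the connected graph $G$, identifying $C$ to a single vertex yields a connected graph $G/C$, which together with losslessness shows $G/C \in \mathcal{F}_1$.

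For $k \geq 2$, the result follows from Corollary~\ref{k-edge-connected} together with Lemma~\ref{2con_to_k1}(\romannum{3}). If $G \cong K_1$, then $G$ contains no cycles and there is nothing to check. Otherwise $G$ is loopless and $k$-edge-connected. If $|V(G/C)| \geq 2$, then $G/C$ is a cc-minor of $G$ with at least two vertices, so Corollary~\ref{k-edge-connected} gives that $G/C$ is $k$-edge-connected, hence $G/C \in \mathcal{F}_k$. If instead $|V(G/C)| = 1$, then $G/C \cong K_1 \in \mathcal{F}_k$ by the definition of the class.

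I do not expect any real obstacle here, as the paper itself advertises the statement as following immediately from the cited results. The only conceptual point worth flagging is the role of Lemma~\ref{2con_to_k1}(\romannum{3}): it guarantees that the single-vertex graph can genuinely appear as a cc-minor of a $k$-edge-connected graph, and it is precisely this observation that motivates including $K_1$ in $\mathcal{F}_k$ for $k \geq 2$ so that the class is actually closed under cycle contraction rather than failing at the last step.
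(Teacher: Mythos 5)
Your proof is correct and follows essentially the same route as the paper, which simply observes that the proposition is an immediate consequence of Corollary~4.2 and Lemma~5.1(iii): the corollary handles the case where the contracted graph has at least two vertices, and the inclusion of $K_1$ in $\mathcal{F}_k$ absorbs the degenerate case. Your explicit treatment of $k=1$ and your remark on why $K_1$ must be in the class are consistent with the paper's intent.
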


Clearly, $\mathcal{F}_{k+1}\subseteq\mathcal{F}_{k}$ for each positive integer $k$. Now we provide a forbidden cc-minor characterization of $\mathcal{F}_k$ for each $k$ in $\{1,2,3\}$.

\begin{theorem}The following statements hold for a loopless graph $G$.

\begin{enumerate}
    \item[(\romannum{1})] $G$ is in $\mathcal{F}_1$ if and only if $G$ does not have a forest with at least two components as a cc-minor.
    \item[(\romannum{2})] $G$ is in $\mathcal{F}_2$ if and only if $G$ is in $\mathcal{F}_1$ and $G$ does not have a tree with at least one edge as a cc-minor.
    \item[(\romannum{3})] $G$ is in $\mathcal{F}_3$ if and only if $G$ is in $\mathcal{F}_2$ and $G$ does not have a cycle of size at least two as a cc-minor.
\end{enumerate}
\end{theorem}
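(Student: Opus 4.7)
The plan is to handle each statement by separately addressing the ``only if'' and ``if'' directions, relying on three standard ingredients: Corollary~\ref{k-edge-connected} (cc-minors inherit $k$-edge-connectedness), Corollary~\ref{cor:contracting subgraph} (contracting a $2$-edge-connected subgraph is a legal cc-operation), and Proposition~\ref{prop:structure} (the structural description of cc-minors). From Proposition~\ref{prop:structure}, writing $H = G/Z$ with $Z$ a disjoint union of $2$-edge-connected (hence connected) subgraphs, and noting that $H - e = (G - e)/Z$ for any $e \in E(H)$, one sees that the contraction preserves the number of components of both $G$ and $G - e$. Hence $G$ is connected if and only if $H$ is, and $e$ is a bridge of $G$ if and only if $e$ is a bridge of $H$; I shall call these \emph{connectedness-lifting} and \emph{bridge-lifting}.

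The three ``only if'' directions are then short. For (\romannum{1}), connectedness-lifting prevents a connected graph from having a disconnected forest as a cc-minor. For (\romannum{2}), every edge of a tree is a bridge, so bridge-lifting prevents any bridgeless graph---in particular every $2$-edge-connected graph and $K_1$---from having a tree with at least one edge as a cc-minor; the containment $\mathcal{F}_2 \subseteq \mathcal{F}_1$ is immediate. For (\romannum{3}), every cycle (including $B_2$) has edge-connectivity exactly $2$, so Corollary~\ref{k-edge-connected} rules it out as a cc-minor of any $3$-edge-connected graph; and $\mathcal{F}_3 \subseteq \mathcal{F}_2$ is immediate.

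For the ``if'' directions of (\romannum{1}) and (\romannum{2}), I use the \emph{bridge-block decomposition}. Let $S$ denote the set of bridges of $G$. Each component of $G - S$ is either a single vertex or a $2$-edge-connected subgraph of $G$, for any bridge of such a component would be a bridge of $G$. By Corollary~\ref{cor:contracting subgraph}, I contract each $2$-edge-connected piece to a single vertex, leaving a forest whose edges are exactly the bridges of $G$ and whose number of components equals that of $G$. If $G$ is disconnected, the forest is disconnected, settling (\romannum{1})$\Leftarrow$. If $G$ is connected, loopless, not $K_1$, and fails to be $2$-edge-connected, then $G$ contains at least one bridge, so the resulting tree has at least one edge, settling (\romannum{2})$\Leftarrow$.

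The main obstacle is the ``if'' direction of (\romannum{3}), where a cycle cc-minor must be constructed from a $2$-edge-cut. Suppose $G \in \mathcal{F}_2 \setminus \mathcal{F}_3$ with $G \neq K_1$, and choose a $2$-edge-cut $\{e, f\}$ with sides $V_A, V_B$; write $e = a_1 b_1$ and $f = a_2 b_2$ with $a_i \in V_A$, $b_i \in V_B$. Since $G - e$ is connected with $f$ as a bridge, both $G[V_A]$ and $G[V_B]$ are connected. The structural claim is that every bridge $g$ of $G[V_A]$ must separate $a_1$ from $a_2$: otherwise the component of $G[V_A] - g$ not containing $\{a_1, a_2\}$ would have no edges to $V_B$, making $g$ a bridge of $G$ and contradicting $2$-edge-connectedness. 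Consequently the bridge-block tree of $G[V_A]$ is a path from the block containing $a_1$ to the block containing $a_2$ (collapsing to a single node when $a_1 = a_2$), and similarly for $G[V_B]$. Contracting all $2$-edge-connected bridge-blocks on both sides via Corollary~\ref{cor:contracting subgraph} reduces $G[V_A]$ and $G[V_B]$ to paths $P_A$ and $P_B$ (or single vertices in the coincident cases); reassembling via $e$ and $f$ yields a cycle $C_n$ with $n \geq 2$, where $n = 2$ (i.e., $B_2$) arises precisely when both $a_1 = a_2$ and $b_1 = b_2$. I expect the main subtlety to lie in verifying this ``path'' structure of the bridge-block trees on each side of the cut and in handling the degenerate coincidence cases cleanly.
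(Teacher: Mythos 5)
Your proposal is correct. The ``only if'' directions and the ``if'' directions of (\romannum{1}) and (\romannum{2}) are in substance the same as the paper's: the paper invokes closure of each $\mathcal{F}_k$ under cycle contraction and then contracts cycles until none remain, which produces exactly the bridge-block forest you obtain by contracting the $2$-edge-connected pieces of $G-S$; your ``connectedness-lifting'' and ``bridge-lifting'' observations are just an explicit form of that closure. Where you genuinely diverge is the ``if'' direction of (\romannum{3}). The paper contracts all cycles avoiding the bond $\{e,f\}$, notes that every remaining cycle must contain both $e$ and $f$ (a circuit meets a cocircuit in an even number of elements), and then rules out two distinct cycles by observing that their symmetric difference would be a nonempty disjoint union of circuits avoiding $\{e,f\}$ --- a short matroid-theoretic uniqueness argument. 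You instead work structurally on the two sides $V_A,V_B$ of the cut: every bridge of $G[V_A]$ must separate $a_1$ from $a_2$ (else it would be a bridge of $G$), so the bridge-block tree of each side is a path, and contracting the blocks and reassembling through $e$ and $f$ yields the cycle directly. Your route is longer and requires the careful case analysis of coincident endpoints that you flag, but it is entirely elementary and constructs the cycle explicitly; the paper's route is shorter but leans on circuit/cocircuit orthogonality in $M(J)$. Both are sound, and your handling of the degenerate case $a_1=a_2$, $b_1=b_2$ (yielding $B_2$) is consistent with the paper's conventions.
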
 
\begin{proof}
    First we observe the following.
    \begin{enumerate}
        \item[(a)] A forest with at least two components is not in $\mathcal{F}_1$.
        \item[(b)] A tree with at least one edge is not in $\mathcal{F}_2$.
        \item[(c)] A cycle of size at least two is not in $\mathcal{F}_3$.
    \end{enumerate}
    Since each $\mathcal{F}_k$ is closed under cycle contractions, to prove the theorem, it remains to show that the graphs in (a)-(c) are the only obstructions to membership of $\mathcal{F}_k$ for $k$ in $\{1,2,3\}$.
    
    First, suppose $G$ is not connected. Let $F$ be a cc-minor of $G$ obtained by repeatedly contracting cycles until no cycles remain. Evidently, $F$ is a forest that has the same number of components as $G$. Thus, $F$ is a forest with at least two components, which confirms (\romannum{1}). 
    
    Now, suppose $G\in\mathcal{F}_1-\mathcal{F}_2$. Clearly, $G$ has a cut edge $e$. Let $H$ be a cc-minor of $G$ obtained by repeatedly contracting cycles until no cycles remain. Evidently, $H$ is a tree that contains $e$, which confirms (\romannum{2}).

    Finally, suppose that $G\in\mathcal{F}_2-\mathcal{F}_3$. Since $G$ is not $3$-edge-connected, $G$ has a bond $\{e,f\}$. Let $J$ be a cc-minor of $G$ obtained by repeatedly contracting cycles that contain neither $e$ nor $f$ until no such cycles remain. By Lemma~\ref{k-edge-connected}, $J$ is $2$-edge-connected. Since $\{e,f\}$ is a bond of $J$, every cycle of $J$ contains either none or all of $\{e,f\}$. Therefore, by the construction of $J$, every cycle in $J$ contains both $e$ and $f$. Suppose $C_1$ and $C_2$ are two distinct cycles of $J$. Then $C_1\Delta C_2$, which equals $(C_1\cup C_2)-(C_1\cap C_2)$, is a non-empty disjoint union of cycles of $J$. However, $\{e,f\}\not\subseteq C_1\Delta C_2$, a contradiction. Therefore, $J$ is a cycle, which confirms (\romannum{3}).

\end{proof}

The next theorem characterizes $\mathcal{F}_k$ for all $k>3$.

\begin{theorem}\label{thm: forb_F_k}
    Let $k$ be an integer exceeding three. A loopless graph $G$ is in $\mathcal{F}_{k}$ if and only if $G$ is in $\mathcal{F}_{k-1}$ and  $G$ does not have a cc-minor isomorphic to $B_{k-1}$.
\end{theorem}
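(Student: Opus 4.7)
The plan is to prove both directions of the equivalence, with essentially all the work in the converse.

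For the forward direction, I would note two quick facts: $\mathcal{F}_k\subseteq\mathcal{F}_{k-1}$, and $B_{k-1}\notin\mathcal{F}_k$ because $B_{k-1}$ has edge-connectivity $k-1<k$ and is not $K_1$. Combined with the earlier proposition that $\mathcal{F}_k$ is closed under cycle contraction (hence under taking cc-minors), this shows that no $G\in\mathcal{F}_k$ can have $B_{k-1}$ as a cc-minor.

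For the converse I would take a loopless $G\in\mathcal{F}_{k-1}-\mathcal{F}_k$, so that $G$ is $(k-1)$-edge-connected but not $k$-edge-connected; in particular $G$ has a bond $Z$ with $|Z|=k-1$. Let $V_1,V_2$ be the vertex sets of the two components of $G-Z$. My aim is to show that each of $G[V_1]$ and $G[V_2]$ is either $K_1$ or $2$-edge-connected, after which Corollary~\ref{cor:contracting subgraph} lets me contract each $G[V_i]$ in turn via cycle contractions. Once both are collapsed, what remains is a two-vertex graph with the $k-1$ edges of $Z$ as parallel edges, namely $B_{k-1}$.

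The crux, and the only place the hypothesis $k>3$ is used, is this structural claim on the two sides of $Z$. I would prove it by contradiction: suppose $G[V_1]$ has a cut edge $f$, splitting $V_1$ into nonempty parts $A$ and $B$. Let $a$ and $b$ be the numbers of edges of $Z$ whose $V_1$-endpoint lies in $A$ and $B$ respectively, so $a+b=|Z|=k-1$. The edges of $G$ crossing the cut $(A,V(G)-A)$ are exactly $f$ together with those $a$ edges of $Z$, so by $(k-1)$-edge-connectivity $1+a\geq k-1$ and hence $a\geq k-2$. The symmetric argument gives $b\geq k-2$, so $k-1=a+b\geq 2(k-2)$, forcing $k\leq 3$ and contradicting $k\geq 4$. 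This counting is the main obstacle, and it is also precisely what makes the theorem fail for $k\leq 3$, which is consistent with the separate characterizations already given for $\mathcal{F}_1,\mathcal{F}_2,\mathcal{F}_3$.
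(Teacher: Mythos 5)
Your proof is correct. The forward direction matches the paper's. For the converse, both arguments start from a bond $Z$ of size $k-1$ and both ultimately rest on the same arithmetic fact that $2(k-2)>k-1$ exactly when $k>3$, but the mechanics differ. The paper first contracts every cycle that avoids $Z$, so that each side of the bond collapses to a tree, and then uses the minimum-degree bound $d_H(v)\geq k-1$ at the leaves of these trees: a leaf would absorb at least $k-2$ edges of $Z$, so two leaves are impossible and each tree is a single vertex. You instead work directly in $G$: you show that neither shore of $Z$ can contain a cut edge, by counting the edges leaving each of the two pieces $A$ and $B$ that such an edge would create ($1+a\geq k-1$ and $1+b\geq k-1$ against $a+b=k-1$), conclude that each shore is $K_1$ or $2$-edge-connected, and then collapse each shore in one step via Corollary~\ref{cor:contracting subgraph}. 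Your route is somewhat more economical -- it avoids the iterated contraction sequence and the tree/leaf analysis, and it isolates exactly where $k>3$ is needed -- while the paper's route has the minor virtue of producing the $B_{k-1}$ cc-minor as the terminal object of a canonical contraction process. The two proofs are different packagings of the same counting obstruction.
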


\begin{proof}
    Clearly $\mathcal{F}_{k-1} \subseteq \mathcal{F}_{k}$ and $B_{k-1} \notin \mathcal{F}_{k}$. To prove the converse, suppose that $G$ belongs to $\mathcal{F}_{k-1} - \mathcal{F}_{k}$. Let $\{x_1, x_2, \dots, x_{k-1}\}$ be a bond of $G$, and let $H$ be a cc-minor of $G$ obtained by repeatedly contracting cycles that do not contain any edge in $\{x_1, x_2, \dots, x_{k-1}\}$ until no such cycles remain. Evidently, $H \setminus \{x_1, x_2, \dots, x_{k-1}\}$ consists of two components, $T_1$ and $T_2$, each of which is a tree. Since $H$ is $(k-1)$-edge-connected, we have $d_H(v) \geq k-1$ for all $v \in V(H)$. Let $l$ be a leaf of $T_i$ for some $i \in \{1,2\}$. In $H$, the leaf $l$ is incident with at least $k-2$ edges from $\{x_1, x_2, \dots, x_{k-1}\}$. However, for $k > 3$, we have $2(k-2) > k-1$. Therefore, $T_i$ has at most one leaf for each $i \in \{1,2\}$. Thus, both $T_1$ and $T_2$ must be single-vertex graphs, and we conclude that $H \cong B_{k-1}$, which confirms the theorem.

\end{proof}


\section{cc-minors of $3$-connected graphs}\label{3-con}
In this section, we determine a set of unavoidable cc-minors of $3$-connected graphs. By a {\it simplification} of a graph $G$, we mean a simple graph that is obtained from $G$ by deleting all the loops and deleting all but one edge from each maximal set of parallel edges.

\begin{theorem}\label{3-con cc_minor}
    Let $G$ be a simple $3$-connected graph, and let $e$ and $f$ be two distinct edges in $G$. Then $G$ has a cc-minor $H$ containing $e$ and $f$ such that one of the following holds.
    \begin{itemize}
        \item[(\romannum{1})] For some $n\geq 3$, the graph $H$ is isomorphic to a bond graph $B_n$ containing $e$ and $f$; or
        \item[(\romannum{2})] $H$ is isomorphic to a fan-type graph of which $e$ and $f$ are distinct outer spokes that are not parallel; or
        \item[(\romannum{3})] a simplification of $H$ has $e$ and $f$ as non-adjacent edges and is isomorphic to $K_4$. Moreover, if an edge $g$ of $H$ is not parallel to $e$ or $f$, then $g$ is not parallel to any edge in $E(H)$.
    \end{itemize}
\end{theorem}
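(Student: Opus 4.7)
Write $e = u_1u_2$ and $f = v_1v_2$. The plan is to reduce $G$ to a cc-minor $H$ of the listed form via cycle contractions preserving $e$ and $f$, splitting on whether $e$ and $f$ share an endpoint in $G$. If $e$ and $f$ share an endpoint $u$ in $G$, then since $G$ is $3$-connected with $|V(G)| \geq 4$, the subgraph $G - u$ is $2$-connected, hence $2$-edge-connected. By Lemma~\ref{2con_to_k1}(\romannum{3}), $G - u$ has $K_1$ as a cc-minor via a sequence of cycle contractions; the same sequence, executed inside $G$, collapses $V(G) - u$ to a single vertex $v^{*}$. Since $d_G(u) \geq 3$, the result is a bond graph $B_n$ on $\{u, v^{*}\}$ with $n \geq 3$, containing both $e$ and $f$, yielding conclusion~(\romannum{1}).

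Now suppose $e$ and $f$ are vertex-disjoint in $G$. I would run Phase~1: starting from $G_0 = G$, iteratively contract any cycle $C$ of the current graph satisfying $V(C) \not\supseteq \{u_1,u_2\}$ and $V(C) \not\supseteq \{v_1,v_2\}$ (so $e$ and $f$ remain distinct non-loop edges), terminating at $H$. By Corollary~\ref{k-edge-connected}, $H$ is $3$-edge-connected, and by termination, every cycle of $H$ contains the current pair $\{u_1,u_2\}$ or $\{v_1,v_2\}$ as a vertex subset. If $u_1,u_2,v_1,v_2$ remain distinct in $H$, I would prove $V(H) = \{u_1,u_2,v_1,v_2\}$ by a chord-extension argument modelled on Lemma~\ref{2con_to_parallel}: for any $w \in V(H) \setminus \{u_1,u_2,v_1,v_2\}$, the condition $d_H(w) \geq 3$ yields a chord $g'$ (or a chord-extension path obtained via a shortest path from $g'$'s far endpoint into $V(C)$) on a cycle $C$ through $w$, splitting $C$ into two subcycles $C_1, C_2$ sharing only $\{w,z\}$; a case analysis on how the two disjoint pairs $\{u_1,u_2\}$ and $\{v_1,v_2\}$ distribute across the two $C$-arcs forces at least one of $C_1, C_2$ to contain neither pair, contradicting Phase~1 termination. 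Thus $|V(H)| = 4$. Moreover, the Phase~1 rule forbids any $2$-cycle of parallel cross edges joining some $u_i$ to some $v_j$, so no cross edge has a parallel copy; combined with $3$-edge-connectedness, which forces every cross slot to be occupied by at least one edge, the simplification of $H$ is $K_4$ with $e$ and $f$ as non-adjacent edges, giving conclusion~(\romannum{3}). If instead a Phase~1 contraction merges, say, $u_1$ with $v_1$ into a single vertex $u$ of $H$, then $e = uw_1$ and $f = uw_2$ share the endpoint $u$ in $H$; every cycle of $H$ then contains $u$, so $H - u$ is a forest, and a parallel degree-plus-termination analysis (again patterned on Lemma~\ref{2con_to_parallel}) shows $H - u$ is a single $w_1w_2$-path in which every internal vertex is joined to $u$ only by parallel edges. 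Thus $H$ is a fan-type graph with $e$ and $f$ as non-parallel outer spokes, conclusion~(\romannum{2}), degenerating to conclusion~(\romannum{1}) if the $w_1w_2$-path collapses to a single vertex.

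The main obstacle is the chord-extension argument yielding $|V(H)| = 4$. Lemma~\ref{2con_to_parallel}'s degree-$2$ argument is the direct template, but the presence of two disjoint basepoint pairs rather than a single edge demands extra casework: one has to verify that the two subcycles $C_1, C_2$ cannot both be ``saved'' by covering different required pairs, a configuration ruled out by the constraint $\{v_1,v_2\} \not\subseteq \{w,z\} = V(C_1) \cap V(C_2)$ (which follows from $w \notin \{v_1,v_2\}$) together with the symmetric constraint for $\{u_1,u_2\}$. A secondary point of care is the Subcase~2b fan-type reduction, where one must check that $3$-edge-connectedness combined with Phase~1 termination eliminates any branching of the forest $H - u$ off the $w_1w_2$-path.
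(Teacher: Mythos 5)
Your handling of the adjacent case matches the paper, and contracting cycles that miss both endpoint pairs is a natural opening move, but the key claim of your disjoint case is false. You assert that if Phase~1 terminates with $u_1,u_2,v_1,v_2$ distinct --- so that the terminal graph $H$ is $3$-edge-connected and every cycle of $H$ contains $\{u_1,u_2\}$ or $\{v_1,v_2\}$ as a vertex subset --- then $V(H)=\{u_1,u_2,v_1,v_2\}$. Here is a counterexample. Let $V(H)=\{u_1,u_2,v_1,v_2,w\}$ and let $E(H)$ consist of $e=u_1u_2$, two parallel edges $f,f'$ joining $v_1$ and $v_2$, and the five edges $u_1w$, $u_2w$, $wv_1$, $u_1v_2$, $u_2v_2$. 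This graph is $3$-edge-connected and $e,f$ are non-adjacent, yet every cycle contains one of the two pairs: the cycles are the digon on $\{v_1,v_2\}$, the triangles $u_1u_2w$ and $u_1u_2v_2$, the quadrilaterals with vertex sets $\{u_1,w,u_2,v_2\}$, $\{u_1,w,v_1,v_2\}$, $\{u_2,w,v_1,v_2\}$, and the pentagons through all five vertices. So $H$ is a fixed point of your Phase~1 with five vertices, and it is neither a bond, nor a fan-type graph (no vertex is adjacent to all others), nor a graph whose simplification is $K_4$; your argument has nowhere to go from here.

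The failure is exactly at the configuration you flagged and then dismissed. When the chord (or chord-extension path) at $w$ splits a cycle $C$ into $C_1$ and $C_2$, it is entirely possible that $C_1$ contains $\{u_1,u_2\}$ while $C_2$ contains $\{v_1,v_2\}$; your observation that $\{v_1,v_2\}\not\subseteq V(C_1)\cap V(C_2)$ only rules out the case in which both subcycles must be saved by the \emph{same} pair, not this mixed case. In the example above, take $C$ to be the pentagon $u_1u_2wv_1v_2$ and the third edge at $w$ to be $u_1w$: then $C_1$ is the triangle $u_1u_2w$ and $C_2$ is the quadrilateral $u_1v_2v_1w$, each containing a different required pair. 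This mixed configuration is precisely what forces the paper's much longer route: the dichotomy between type-A and type-B $\Theta$-graphs, and, when no cc-minor contains a type-A $\Theta$-graph, the construction of a spanning four-path connector followed by absorption arguments that deliberately contract cycles passing through one endpoint of $e$ and one endpoint of $f$ --- contractions your Phase~1 permits but whose effect your termination analysis cannot capture. Without a genuine argument for the mixed case, your derivation of conclusion~(\romannum{3}) does not go through.
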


\begin{figure}[htb]
    \hbox to \hsize{
	\hfil
	\resizebox{8cm}{!}{\tikzset{every picture/.style={line width=0.75pt}} 

\begin{tikzpicture}[x=0.75pt,y=0.75pt,yscale=-1,xscale=1]

\draw [color={rgb, 255:red, 0; green, 0; blue, 0 }  ,draw opacity=1 ] [dash pattern={on 4.5pt off 4.5pt}]  (307.69,67.49) -- (307.69,162.32) ;
\draw [color={rgb, 255:red, 0; green, 0; blue, 0 }  ,draw opacity=1 ] [dash pattern={on 4.5pt off 4.5pt}]  (401.2,67.49) -- (401.2,162.32) ;
\draw  [color={rgb, 255:red, 0; green, 0; blue, 0 }  ,draw opacity=1 ][fill={rgb, 255:red, 0; green, 0; blue, 0 }  ,fill opacity=1 ] (304.5,67.49) .. controls (304.5,65.72) and (305.92,64.3) .. (307.69,64.3) .. controls (309.45,64.3) and (310.88,65.72) .. (310.88,67.49) .. controls (310.88,69.25) and (309.45,70.68) .. (307.69,70.68) .. controls (305.92,70.68) and (304.5,69.25) .. (304.5,67.49) -- cycle ;
\draw  [color={rgb, 255:red, 0; green, 0; blue, 0 }  ,draw opacity=1 ][fill={rgb, 255:red, 0; green, 0; blue, 0 }  ,fill opacity=1 ] (304.5,162.32) .. controls (304.5,160.56) and (305.92,159.13) .. (307.69,159.13) .. controls (309.45,159.13) and (310.88,160.56) .. (310.88,162.32) .. controls (310.88,164.08) and (309.45,165.51) .. (307.69,165.51) .. controls (305.92,165.51) and (304.5,164.08) .. (304.5,162.32) -- cycle ;
\draw  [color={rgb, 255:red, 0; green, 0; blue, 0 }  ,draw opacity=1 ][fill={rgb, 255:red, 0; green, 0; blue, 0 }  ,fill opacity=1 ] (398.01,67.49) .. controls (398.01,65.72) and (399.44,64.3) .. (401.2,64.3) .. controls (402.96,64.3) and (404.39,65.72) .. (404.39,67.49) .. controls (404.39,69.25) and (402.96,70.68) .. (401.2,70.68) .. controls (399.44,70.68) and (398.01,69.25) .. (398.01,67.49) -- cycle ;
\draw  [color={rgb, 255:red, 0; green, 0; blue, 0 }  ,draw opacity=1 ][fill={rgb, 255:red, 0; green, 0; blue, 0 }  ,fill opacity=1 ] (398.01,162.32) .. controls (398.01,160.56) and (399.44,159.13) .. (401.2,159.13) .. controls (402.96,159.13) and (404.39,160.56) .. (404.39,162.32) .. controls (404.39,164.08) and (402.96,165.51) .. (401.2,165.51) .. controls (399.44,165.51) and (398.01,164.08) .. (398.01,162.32) -- cycle ;
\draw [color={rgb, 255:red, 0; green, 0; blue, 0 }  ,draw opacity=1 ]   (307.69,67.49) -- (401.2,67.49) ;
\draw [color={rgb, 255:red, 0; green, 0; blue, 0 }  ,draw opacity=1 ]   (307.69,162.32) -- (401.2,162.32) ;
\draw [color={rgb, 255:red, 0; green, 0; blue, 0 }  ,draw opacity=1 ]   (307.69,67.49) -- (401.2,162.32) ;
\draw [color={rgb, 255:red, 0; green, 0; blue, 0 }  ,draw opacity=1 ]   (307.69,162.32) -- (401.2,67.49) ;
\draw [color={rgb, 255:red, 0; green, 0; blue, 0 }  ,draw opacity=1 ][line width=0.75]  [dash pattern={on 4.5pt off 4.5pt}]  (154.22,70.13) -- (85.38,156.54) ;
\draw  [color={rgb, 255:red, 0; green, 0; blue, 0 }  ,draw opacity=1 ][fill={rgb, 255:red, 0; green, 0; blue, 0 }  ,fill opacity=1 ] (151.03,70.13) .. controls (151.03,68.36) and (152.46,66.93) .. (154.22,66.93) .. controls (155.98,66.93) and (157.41,68.36) .. (157.41,70.13) .. controls (157.41,71.89) and (155.98,73.32) .. (154.22,73.32) .. controls (152.46,73.32) and (151.03,71.89) .. (151.03,70.13) -- cycle ;
\draw  [color={rgb, 255:red, 0; green, 0; blue, 0 }  ,draw opacity=1 ][fill={rgb, 255:red, 0; green, 0; blue, 0 }  ,fill opacity=1 ] (82.19,156.54) .. controls (82.19,154.78) and (83.62,153.35) .. (85.38,153.35) .. controls (87.14,153.35) and (88.57,154.78) .. (88.57,156.54) .. controls (88.57,158.31) and (87.14,159.74) .. (85.38,159.74) .. controls (83.62,159.74) and (82.19,158.31) .. (82.19,156.54) -- cycle ;
\draw  [color={rgb, 255:red, 0; green, 0; blue, 0 }  ,draw opacity=1 ][fill={rgb, 255:red, 0; green, 0; blue, 0 }  ,fill opacity=1 ] (109.73,156.54) .. controls (109.73,154.78) and (111.16,153.35) .. (112.92,153.35) .. controls (114.68,153.35) and (116.11,154.78) .. (116.11,156.54) .. controls (116.11,158.31) and (114.68,159.74) .. (112.92,159.74) .. controls (111.16,159.74) and (109.73,158.31) .. (109.73,156.54) -- cycle ;
\draw  [color={rgb, 255:red, 0; green, 0; blue, 0 }  ,draw opacity=1 ][fill={rgb, 255:red, 0; green, 0; blue, 0 }  ,fill opacity=1 ] (137.27,156.54) .. controls (137.27,154.78) and (138.7,153.35) .. (140.46,153.35) .. controls (142.22,153.35) and (143.65,154.78) .. (143.65,156.54) .. controls (143.65,158.31) and (142.22,159.74) .. (140.46,159.74) .. controls (138.7,159.74) and (137.27,158.31) .. (137.27,156.54) -- cycle ;
\draw  [color={rgb, 255:red, 0; green, 0; blue, 0 }  ,draw opacity=1 ][fill={rgb, 255:red, 0; green, 0; blue, 0 }  ,fill opacity=1 ] (164.81,156.54) .. controls (164.81,154.78) and (166.24,153.35) .. (168,153.35) .. controls (169.76,153.35) and (171.19,154.78) .. (171.19,156.54) .. controls (171.19,158.31) and (169.76,159.74) .. (168,159.74) .. controls (166.24,159.74) and (164.81,158.31) .. (164.81,156.54) -- cycle ;
\draw  [color={rgb, 255:red, 0; green, 0; blue, 0 }  ,draw opacity=1 ][fill={rgb, 255:red, 0; green, 0; blue, 0 }  ,fill opacity=1 ] (192.35,156.54) .. controls (192.35,154.78) and (193.78,153.35) .. (195.54,153.35) .. controls (197.3,153.35) and (198.73,154.78) .. (198.73,156.54) .. controls (198.73,158.31) and (197.3,159.74) .. (195.54,159.74) .. controls (193.78,159.74) and (192.35,158.31) .. (192.35,156.54) -- cycle ;
\draw  [color={rgb, 255:red, 0; green, 0; blue, 0 }  ,draw opacity=1 ][fill={rgb, 255:red, 0; green, 0; blue, 0 }  ,fill opacity=1 ] (219.87,156.54) .. controls (219.87,154.78) and (221.3,153.35) .. (223.06,153.35) .. controls (224.83,153.35) and (226.26,154.78) .. (226.26,156.54) .. controls (226.26,158.31) and (224.83,159.74) .. (223.06,159.74) .. controls (221.3,159.74) and (219.87,158.31) .. (219.87,156.54) -- cycle ;

\draw [color={rgb, 255:red, 0; green, 0; blue, 0 }  ,draw opacity=1 ]   (154.22,70.13) -- (112.92,156.54) ;
\draw [color={rgb, 255:red, 0; green, 0; blue, 0 }  ,draw opacity=1 ]   (154.22,70.13) -- (140.46,156.54) ;
\draw [color={rgb, 255:red, 0; green, 0; blue, 0 }  ,draw opacity=1 ]   (154.22,70.13) -- (168,156.54) ;
\draw [color={rgb, 255:red, 0; green, 0; blue, 0 }  ,draw opacity=1 ]   (154.22,70.13) -- (195.54,156.54) ;
\draw [color={rgb, 255:red, 0; green, 0; blue, 0 }  ,draw opacity=1 ][line width=0.75]  [dash pattern={on 4.5pt off 4.5pt}]  (154.22,70.13) -- (223.06,156.54) ;
\draw [color={rgb, 255:red, 0; green, 0; blue, 0 }  ,draw opacity=1 ]   (85.38,156.54) -- (112.92,156.54) ;
\draw [color={rgb, 255:red, 0; green, 0; blue, 0 }  ,draw opacity=1 ]   (112.92,156.54) -- (140.46,156.54) ;
\draw [color={rgb, 255:red, 0; green, 0; blue, 0 }  ,draw opacity=1 ]   (168,156.54) -- (195.54,156.54) ;
\draw [color={rgb, 255:red, 0; green, 0; blue, 0 }  ,draw opacity=1 ]   (196.54,156.54) -- (224.06,156.54) ;

\draw (283,111.12) node [anchor=north west][inner sep=0.75pt]  [color={rgb, 255:red, 0; green, 0; blue, 0 }  ,opacity=1 ]  {$e$};
\draw (412,111.12) node [anchor=north west][inner sep=0.75pt]  [color={rgb, 255:red, 0; green, 0; blue, 0 }  ,opacity=1 ]  {$f$};
\draw (138.57,152.91) node [anchor=north west][inner sep=0.75pt]    {$\cdots $};
\draw (100,103.72) node [anchor=north west][inner sep=0.75pt]  [color={rgb, 255:red, 0; green, 0; blue, 0 }  ,opacity=1 ]  {$e$};
\draw (201,102.72) node [anchor=north west][inner sep=0.75pt]  [color={rgb, 255:red, 0; green, 0; blue, 0 }  ,opacity=1 ]  {$f$};

\end{tikzpicture}}%
	\hfil
    }
    \caption{Possible simplifications of $H$ that fall under cases (\romannum{2}) and (\romannum{3}).}\label{fig:K4_and_Fan}
    \end{figure}
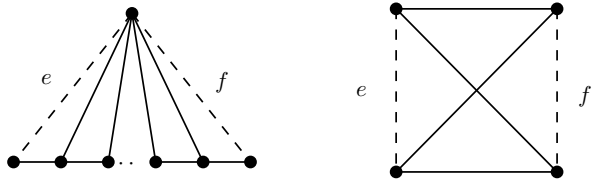

\begin{proof}

Let $e$ have ends $x_1$ and $x_2$, and let $f$ have ends $y_1$ and $y_2$. First, we prove the following.

\begin{sublemma}\label{e,f_adjacent}
    If $e$ and $f$ are adjacent, then $G$ has a cc-minor $H$ such that $H$ is isomorphic to $B_n$, for some $n\geq 3$, and $H$ has $e$ and $f$ as distinct edges.
\end{sublemma}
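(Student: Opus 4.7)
The plan is to apply Proposition~\ref{prop:structure} directly, using a two-vertex partition of $V(G)$ built around the shared endpoint of $e$ and $f$. Since $e$ and $f$ are adjacent, they share a vertex $w$. I would choose $G_u$ to be the single-vertex subgraph $\{w\}$ and $G_v$ to be the subgraph of $G$ induced by $V(G)-w$. Under this choice, the edges of $G$ that lie between $G_u$ and $G_v$ are exactly the edges of $G$ incident with $w$, and these will form the edge set of the resulting cc-minor $H$, yielding $H\cong B_n$ with $n=\deg_G(w)$ while keeping both $e$ and $f$.

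To legitimize this application of Proposition~\ref{prop:structure}, I must verify that $G_v=G-w$ is $2$-edge-connected and that $\deg_G(w)\geq 3$. Since $G$ is simple and $3$-connected in the paper's sense, $|V(G)|\geq 4$, because a simple graph on at most $3$ vertices cannot admit three internally disjoint paths between every pair of its vertices. In particular, the minimum degree of $G$ is at least $3$, so $\deg_G(w)\geq 3$. For the $2$-edge-connectivity, given any two vertices $a,b$ of $G-w$, the three internally disjoint $ab$-paths guaranteed by $3$-connectivity of $G$ can use $w$ as an internal vertex in at most one of them, so at least two survive in $G-w$; hence $G-w$ is $2$-connected, and since $|V(G-w)|\geq 3$, it is also $2$-edge-connected.

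With these two facts in hand, Proposition~\ref{prop:structure} immediately yields a cc-minor $H$ of $G$ whose edge set is precisely the set of edges of $G$ incident with $w$. This $H$ is the graph on two vertices with $\deg_G(w)$ edges between them, that is, $H\cong B_n$ for $n=\deg_G(w)\geq 3$, and it contains both $e$ and $f$, as required. The sublemma poses no serious obstacle; the only subtle point is confirming that the vertex $3$-connectivity of $G$ does in fact translate to the $2$-edge-connectivity of $G-w$, but this is handled by the standard paths-based argument above.
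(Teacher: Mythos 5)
Your proof is correct and is essentially the paper's argument: both delete the common endpoint $w$ of $e$ and $f$, note that $3$-connectivity of $G$ makes $G-w$ $2$-connected and hence $2$-edge-connected, and contract $G-w$ to a single vertex to obtain a bond graph on the edges incident with $w$, with $e$ and $f$ surviving and $n=\deg_G(w)\geq 3$. The only cosmetic difference is that you invoke Proposition~\ref{prop:structure} where the paper cites Lemma~\ref{2con_to_k1} and Corollary~\ref{k-edge-connected}; these package the same underlying fact.
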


We may assume that $x_1 = y_1$ and $x_2 \neq y_2$. Since $G$ is $3$-connected, $G - x_1$ is $2$-connected and hence 2-edge-connected. By Lemma~\ref{2con_to_k1} (\romannum{3}), contracting $E(G - x_1)$ forms a cc-minor that is isomorphic to a bond graph that contains $e$ and $f$. Moreover, by Lemma~\ref{k-edge-connected}, $H$ has at least three edges. Hence, \ref{e,f_adjacent} holds.\\

Now we assume $e$ and $f$ are not adjacent. For two distinct vertices $u$ and $v$, a \textit{$\Theta$-graph on $(u, v)$} consists of three internally disjoint $uv$-paths. For each $(x, y) \in \{x_1, x_2\} \times \{y_1, y_2\}$, a $\Theta$-graph on $(x, y)$ that contains both $e$ and $f$ is classified as \textit{type-A} if one of its $xy$-paths contains both $e$ and $f$. It is \textit{type-B} if $e$ and $f$ belong to two different $xy$-paths of the $\Theta$-graph. Examples of these types of graphs are shown in Figure~\ref{fig:special_theta}, where $e$ and $f$ represent single edges, while the other lines in the diagram correspond to paths.

\begin{figure}[htb]
\hbox to \hsize{
\hfil
\resizebox{10cm}{!}{\tikzset{every picture/.style={line width=0.75pt}} 

\begin{tikzpicture}[x=0.75pt,y=0.75pt,yscale=-1,xscale=1]

\draw [color={rgb, 255:red, 0; green, 0; blue, 0 }  ,draw opacity=1 ] [dash pattern={on 4.5pt off 4.5pt}]  (200.69,114.56) -- (235.69,153.56) ;
\draw [color={rgb, 255:red, 0; green, 0; blue, 0 }  ,draw opacity=1 ] [dash pattern={on 4.5pt off 4.5pt}]  (117.69,114.56) -- (82.69,153.56) ;
\draw  [color={rgb, 255:red, 0; green, 0; blue, 0 }  ,draw opacity=1 ][fill={rgb, 255:red, 0; green, 0; blue, 0 }  ,fill opacity=1 ] (79.5,153.56) .. controls (79.5,151.8) and (80.92,150.37) .. (82.69,150.37) .. controls (84.45,150.37) and (85.88,151.8) .. (85.88,153.56) .. controls (85.88,155.33) and (84.45,156.75) .. (82.69,156.75) .. controls (80.92,156.75) and (79.5,155.33) .. (79.5,153.56) -- cycle ;
\draw  [color={rgb, 255:red, 0; green, 0; blue, 0 }  ,draw opacity=1 ][fill={rgb, 255:red, 0; green, 0; blue, 0 }  ,fill opacity=1 ] (114.5,114.56) .. controls (114.5,112.8) and (115.92,111.37) .. (117.69,111.37) .. controls (119.45,111.37) and (120.88,112.8) .. (120.88,114.56) .. controls (120.88,116.33) and (119.45,117.75) .. (117.69,117.75) .. controls (115.92,117.75) and (114.5,116.33) .. (114.5,114.56) -- cycle ;

\draw  [color={rgb, 255:red, 0; green, 0; blue, 0 }  ,draw opacity=1 ][fill={rgb, 255:red, 0; green, 0; blue, 0 }  ,fill opacity=1 ] (197.5,114.56) .. controls (197.5,116.33) and (198.92,117.75) .. (200.69,117.75) .. controls (202.45,117.75) and (203.88,116.33) .. (203.88,114.56) .. controls (203.88,112.8) and (202.45,111.37) .. (200.69,111.37) .. controls (198.92,111.37) and (197.5,112.8) .. (197.5,114.56) -- cycle ;
\draw  [color={rgb, 255:red, 0; green, 0; blue, 0 }  ,draw opacity=1 ][fill={rgb, 255:red, 0; green, 0; blue, 0 }  ,fill opacity=1 ] (232.5,153.56) .. controls (232.5,155.33) and (233.92,156.75) .. (235.69,156.75) .. controls (237.45,156.75) and (238.88,155.33) .. (238.88,153.56) .. controls (238.88,151.8) and (237.45,150.37) .. (235.69,150.37) .. controls (233.92,150.37) and (232.5,151.8) .. (232.5,153.56) -- cycle ;

\draw [color={rgb, 255:red, 0; green, 0; blue, 0 }  ,draw opacity=1 ]   (117.69,114.56) .. controls (136.2,96.34) and (183.2,95.34) .. (200.69,114.56) ;
\draw [color={rgb, 255:red, 0; green, 0; blue, 0 }  ,draw opacity=1 ]   (82.69,153.56) .. controls (112.2,219.34) and (204.39,220.34) .. (235.69,153.56) ;
\draw [color={rgb, 255:red, 0; green, 0; blue, 0 }  ,draw opacity=1 ]   (82.69,153.56) -- (235.69,153.56) ;
\draw [color={rgb, 255:red, 0; green, 0; blue, 0 }  ,draw opacity=1 ] [dash pattern={on 4.5pt off 4.5pt}]  (529.81,144.82) -- (494.81,183.82) ;
\draw [color={rgb, 255:red, 0; green, 0; blue, 0 }  ,draw opacity=1 ] [dash pattern={on 4.5pt off 4.5pt}]  (423.69,106.82) -- (388.69,145.82) ;
\draw  [fill={rgb, 255:red, 0; green, 0; blue, 0 }  ,fill opacity=1 ] (385.5,145.82) .. controls (385.5,144.06) and (386.92,142.63) .. (388.69,142.63) .. controls (390.45,142.63) and (391.88,144.06) .. (391.88,145.82) .. controls (391.88,147.58) and (390.45,149.01) .. (388.69,149.01) .. controls (386.92,149.01) and (385.5,147.58) .. (385.5,145.82) -- cycle ;
\draw  [fill={rgb, 255:red, 0; green, 0; blue, 0 }  ,fill opacity=1 ] (420.5,106.82) .. controls (420.5,105.06) and (421.92,103.63) .. (423.69,103.63) .. controls (425.45,103.63) and (426.88,105.06) .. (426.88,106.82) .. controls (426.88,108.58) and (425.45,110.01) .. (423.69,110.01) .. controls (421.92,110.01) and (420.5,108.58) .. (420.5,106.82) -- cycle ;

\draw  [fill={rgb, 255:red, 0; green, 0; blue, 0 }  ,fill opacity=1 ] (533,144.82) .. controls (533,146.58) and (531.57,148.01) .. (529.81,148.01) .. controls (528.05,148.01) and (526.62,146.58) .. (526.62,144.82) .. controls (526.62,143.06) and (528.05,141.63) .. (529.81,141.63) .. controls (531.57,141.63) and (533,143.06) .. (533,144.82) -- cycle ;
\draw  [fill={rgb, 255:red, 0; green, 0; blue, 0 }  ,fill opacity=1 ] (498,183.82) .. controls (498,185.58) and (496.57,187.01) .. (494.81,187.01) .. controls (493.05,187.01) and (491.62,185.58) .. (491.62,183.82) .. controls (491.62,182.06) and (493.05,180.63) .. (494.81,180.63) .. controls (496.57,180.63) and (498,182.06) .. (498,183.82) -- cycle ;

\draw    (388.69,145.82) -- (529.81,144.82) ;
\draw    (423.69,106.82) .. controls (463.69,76.82) and (528.2,99.6) .. (529.81,144.82) ;
\draw    (388.69,145.82) .. controls (378.2,203.6) and (456.2,214.6) .. (494.81,183.82) ;

\draw (84,124.86) node [anchor=north west][inner sep=0.75pt]  [color={rgb, 255:red, 0; green, 0; blue, 0 }  ,opacity=1 ]  {$e$};
\draw (224,123.86) node [anchor=north west][inner sep=0.75pt]  [color={rgb, 255:red, 0; green, 0; blue, 0 }  ,opacity=1 ]  {$f$};
\draw (145,216.4) node [anchor=north west][inner sep=0.75pt]    {$\Theta _{A}$};
\draw (61,150.14) node [anchor=north west][inner sep=0.75pt]    {$x$};
\draw (242,150.14) node [anchor=north west][inner sep=0.75pt]    {$y$};
\draw (367,142.4) node [anchor=north west][inner sep=0.75pt]    {$x$};
\draw (540,139.4) node [anchor=north west][inner sep=0.75pt]    {$y$};
\draw (388,119.12) node [anchor=north west][inner sep=0.75pt]  [color={rgb, 255:red, 0; green, 0; blue, 0 }  ,opacity=1 ]  {$e$};
\draw (521,170.12) node [anchor=north west][inner sep=0.75pt]  [color={rgb, 255:red, 0; green, 0; blue, 0 }  ,opacity=1 ]  {$f$};
\draw (435,215.4) node [anchor=north west][inner sep=0.75pt]    {$\Theta _{B}$};

\end{tikzpicture}}%
\hfil
}
\caption{A type-A $\Theta$-graph $\Theta_A$ and a type-B $\Theta$-graph $\Theta_B$}\label{fig:special_theta}
\end{figure}
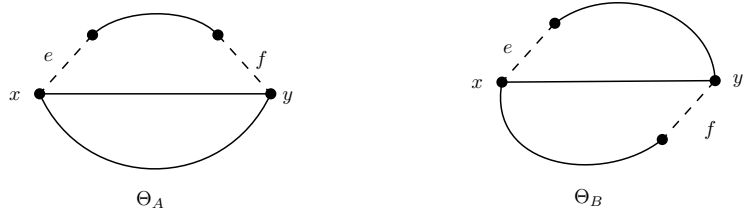

Next we prove the following.

\begin{sublemma}\label{theta-graph}
    For each pair $(x, y) \in \{x_1, x_2\} \times \{y_1, y_2\}$, there is a $\Theta$-graph on $(x, y)$ that contains $e$ and $f$.
\end{sublemma}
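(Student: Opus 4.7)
The plan is to fix the pair $(x,y)$ and, by symmetry, assume $x = x_1$ and $y = y_1$. Since $G$ is $3$-connected, Menger's theorem yields at least one triple of internally disjoint $x_1y_1$-paths in $G$. Among all such triples, I would choose $\mathcal{P} = (P_1, P_2, P_3)$ so as to maximize the quantity $|E(P_1 \cup P_2 \cup P_3) \cap \{e, f\}|$, and then show that this maximum equals $2$. Once this is established, $P_1, P_2, P_3$ themselves form the desired $\Theta$-graph on $(x_1, y_1)$ containing both $e$ and $f$.

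To drive the maximization, I would argue by contradiction that some edge of $\{e, f\}$ is missing from $\mathcal{P}$; by symmetry between $e$ and $f$, assume $e \notin E(\mathcal{P})$. The case split is based on where $x_2$ (the other endpoint of $e$) sits relative to $\mathcal{P}$. If $x_2$ lies in the interior of some $P_j$, then I would shortcut $P_j$ to $P_j' = x_1\, e\, x_2\, P_j[x_2, y_1]$, where $P_j[x_2, y_1]$ denotes the subpath of $P_j$ from $x_2$ to $y_1$. This new path is internally disjoint from the other $P_i$ because its interior is contained in $\{x_2\} \cup V(P_j)^{\circ}$, and the discarded segment $P_j[x_1, x_2]$ cannot contain $f = y_1y_2$ since $y_1$ appears only as an endpoint of $P_j$. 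Thus the new triple strictly improves the objective, contradicting maximality.

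The other case is $x_2 \notin V(\mathcal{P})$. Since $G \setminus x_1$ is $2$-connected and hence connected, I would choose a shortest path $Q$ in $G \setminus x_1$ from $x_2$ to $V(\mathcal{P}) \setminus \{x_1\}$, ending at some vertex $z$; by the minimality of $Q$, its interior avoids $V(\mathcal{P})$. If $z$ lies in the interior of some $P_j$, then the same shortcut idea applies: replace $P_j$ by $x_1\, e\, x_2\, Q\, z\, P_j[z, y_1]$, gaining $e$ and, by the same argument as before (the discarded portion avoids $y_1$), preserving $f$. If instead $z = y_1$, let $P^* = x_1\, e\, x_2\, Q\, y_1$; since $x_2$ and the interior of $Q$ are disjoint from $V(\mathcal{P})$, $P^*$ is internally disjoint from every $P_i$. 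I would then swap $P^*$ in for any $P_j$ that does not contain $f$; such a $P_j$ exists because $f$ is incident to the common endpoint $y_1$ and so can belong to at most one of the three internally disjoint paths.

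In every case the maximality of $|E(\mathcal{P}) \cap \{e,f\}|$ is contradicted, so $\{e,f\} \subseteq E(\mathcal{P})$ and the sublemma follows. The main subtlety, and the step I expect to require the most care in the writeup, is the sub-case $z = y_1$ when $f$ already belongs to a path of $\mathcal{P}$: one cannot naively overwrite that path with $P^*$, and the argument hinges on swapping out a \emph{different} $P_j$, which is available precisely because $f$'s incidence with $y_1$ limits it to a single path among the three.
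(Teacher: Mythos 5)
Your proposal is correct and follows essentially the same route as the paper: an extremal choice of the three internally disjoint $x_1y_1$-paths maximizing $|E(\mathcal{P})\cap\{e,f\}|$, a path from $x_2$ into the configuration inside $G-x_1$, a reroute when it first meets the interior of one of the three paths (noting the discarded segment avoids $y_1$ and hence $f$), and a swap with a path avoiding $f$ when it runs all the way to $y_1$. The only difference is cosmetic: the paper handles your two cases uniformly by taking a full $x_2y_1$-path in $G-x_1$ and truncating at its first vertex on the $\Theta$-graph.
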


Without loss of generality, assume $x = x_1$ and $y = y_1$. By Menger's Theorem, there are three internally disjoint $x_1y_1$-paths that form a $\Theta$-graph on $(x,y)$. Choose $\Theta_0$ to be a $\Theta$-graph on $(x,y)$ that contains the maximal number of members of $\{e,f\}$. We may assume that $e\notin E(\Theta_0)$. Since $G - x_1$ is connected, there is an $x_2y_1$-path in $G - x_1$ whose vertices and edges, in order, are $v_1e_1v_2 \dots e_{k-1}v_k$, where $v_1 = x_2$ and $v_k = y_1$. In $G$, we adjoin the edge $e$ to the beginning of this path to form a path $P$. Let $i$ be the smallest index such that $v_i \in V(\Theta_0)$.

Suppose $v_i \neq y_1$. Then $v_i$ lies on an $x_1y_1$-path $Q$ in $\Theta_0$. Note that the $x_1v_i$-subpath $Q'$ of $Q$ does not use the edge $f$. Replacing $Q'$ by the $x_1v_i$-subpath of $P$, we obtain a $\Theta$-graph that violates the choice of $\Theta_0$. Thus $v_i = y_1$. Choose an $x_1y_1$-path $R$ in $\Theta_0$ such that $f \notin E(R)$, and let $\Theta_1$ be the $\Theta$-graph obtained by replacing $R$ with $P$. Then $\Theta_1$ violates the choice of $\Theta_0$. The contradiction completes the proof of~\ref{theta-graph}.\\

Next we show that if $G$ contains a type-A $\Theta$-graph, then the theorem holds.

\begin{sublemma}\label{type-A}
    If $G$ has a cc-minor $G'$ that contains a type-A $\Theta$-graph $\Theta_A$ as a subgraph, then $G$ has a cc-minor $H$ containing $e$ and $f$ such that $H$ is either isomorphic to a fan-type graph, with $e$ and $f$ as distinct non-parallel outer spokes, or $H$ is isomorphic to $B_n$ for some $n \geq 3$.
\end{sublemma}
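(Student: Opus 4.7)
The plan is to exploit the type-A structure by first collapsing the two detour paths of $\Theta_A$ into a single vertex, and then reducing the resulting cycle through $e$ and $f$ to a fan-type graph or a bond graph. Decompose $P_1 = Q_1 \cdot e \cdot Q_2 \cdot f \cdot Q_3$, where $e = ab$ and $f = cd$, and $Q_1, Q_2, Q_3$ are the subpaths of $P_1$ from $x$ to $a$, from $b$ to $c$, and from $d$ to $y$, respectively.

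First, I would apply Corollary~\ref{cor:contracting subgraph} to the $2$-edge-connected cycle $P_2 \cup P_3$ of $G'$, obtaining a cc-minor $G_1$ of $G$ in which $V(P_2) \cup V(P_3)$ collapses to a single vertex $w$; the path $P_1$ then becomes a cycle $L$ through $w$ retaining $e$ and $f$. By Corollary~\ref{k-edge-connected}, $G_1$ is still $3$-edge-connected, since $G$ is $3$-connected and hence $3$-edge-connected.

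Second, I would use the $3$-edge-connectivity of $G_1$ to further contract the segments $Q_1, Q_2, Q_3$ of $L$ appropriately. Each internal vertex $v$ on these segments satisfies $\deg_{G_1}(v) \geq 3$ while having only two incident edges in $L$, so it has an edge $g$ of $G_1$ outside $L$; such a $g$ together with suitable sub-arcs of $L$ or paths through $w$ forms a cycle that avoids $e$ and $f$ and can be contracted. By carrying out such contractions, one aims to merge $a$ and $d$ into $w$ and, more generally, to turn the extra $L$-edges into parallel spokes at $w$, while preserving $e$ and $f$. If the interior endpoints $b$ and $c$ of $e, f$ remain distinct in the resulting cc-minor $H$, then $H$ has hub $w$ with outer spokes $e = wb$ and $f = wc$ and a rim path from $b$ to $c$, giving a fan-type graph $F_{t_1, t_2, \dots, t_n}$ in which $e$ and $f$ are non-parallel outer spokes. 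If $b$ and $c$ become identified by the contractions, then $e$ and $f$ are parallel in $H$, and the preserved $3$-edge-connectivity forces at least one additional parallel edge, yielding $H \cong B_n$ for some $n \geq 3$.

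The main obstacle is to justify Step 2 in full generality: for any $3$-edge-connected graph $G_1$ containing a cycle $L$ with distinguished edges $e$ and $f$, one must show that $G_1$ can be reduced via cycle contractions to a fan-type or bond graph in which $e, f$ play the designated roles. This is essentially a structural reduction for distinguished edges on a cycle in a $3$-edge-connected graph, and will likely require a careful case analysis based on how chord edges and paths through $w$ distribute along the arcs $Q_1$, $Q_2$, and $Q_3$ of $L$.
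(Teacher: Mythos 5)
Your opening move coincides with the paper's: the cycle $P_2\cup P_3$ of $\Theta_A$ avoids $e$ and $f$, so it can be contracted (Corollary~\ref{cor:contracting subgraph}), and by Corollary~\ref{k-edge-connected} the resulting cc-minor is still $3$-edge-connected, with $P_1$ closing up into a cycle through the new hub $w$. One simplification you overlooked: since $x$ is an end of $e$ and the $xy$-path $P_1$ visits $x$ only once, $e$ must be the \emph{first} edge of $P_1$, and likewise $f$ its last edge. Hence $Q_1$ and $Q_3$ are trivial, $a=x$ and $d=y$ are already absorbed into $w$, and $e$, $f$ are already spokes at $w$; there is nothing to ``merge into $w$.''

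The genuine gap is your Step 2, which you yourself identify as the main obstacle, and it is not merely a matter of routine case analysis. Two things are missing. First, the graph $G_1$ may contain many vertices lying on neither $L$ nor $w$; contracting cycles formed by chords of $L$ and sub-arcs of $L$ says nothing about how these vertices are absorbed, and without eliminating them you cannot conclude that the final cc-minor is \emph{literally} a fan-type or bond graph. Second, even on $L$ you need an invariant guaranteeing termination in the desired shape and guaranteeing that no contracted cycle contains \emph{both ends} of $e$ or of $f$ (avoiding the edges themselves is not enough, since identifying the two ends would turn $e$ or $f$ into a loop, which is then removed). The paper fills this gap differently: in $G'/C$ it contracts every maximal $2$-connected subgraph other than the one, $L$, containing $e$ and $f$; it then repeatedly contracts cycles of $L-c$ (where $c$ is the hub, your $w$) until $L-c$ is a tree $T$; it prunes each leaf of $T$ not on the $x_2y_2$-path by contracting a two-edge cycle consisting of a pair of parallel edges to $c$, which must exist because that leaf has degree at least $3$ but only one tree edge; and once $L-c$ is exactly the $x_2y_2$-path, $3$-edge-connectivity forces every rim vertex to carry a spoke to $c$, yielding a fan-type graph with $e$ and $f$ as non-parallel outer spokes, or a bond graph $B_n$ with $n\ge 3$ when the rim degenerates to a single vertex. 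An argument of this kind is what your proposal still needs.
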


By Lemma~\ref{k-edge-connected}, every cc-minor of $G$ is $3$-edge-connected. Thus, we may assume that $\Theta_A$ is $3$-edge-connected. Without loss of generality, we may also assume that $\Theta_A$ is a $\Theta$-graph on $(x_1,y_1)$. Observe that $\Theta_A$ has a cycle $C$ that does not contain $e$ or $f$. Contracting $C$ in $G'$ has the effect of identifying all the vertices in $V(C)$ as a single vertex $c$ and then deleting all the edges in $E(C)$. Since $e$ and $f$ are contained in a cycle of $G/C$, there is a maximal $2$-connected subgraph $L$ of $G'/C$ containing both $e$ and $f$. 

Since $G'/C$ has no cut edges, every edge of $G'/C$ is in a 2-connected subgraph of $G'/C$.  Now, by Lemma~\ref{2con_to_k1} (\romannum{3}), we may contract the edges of all of the maximal $2$-connected subgraphs of $G'/C$ except $L$. The resulting graph is isomorphic to $L$, so we continue referring to it as $L$. Let $L_0 = L$. For $i\geq 0$, if $L_i^-$ contains a cycle $C_i$, let $L_{i+1} = L_i / C_i$. This process produces a sequence $L_0, L_1, \dots, L_s$ of graphs such that $L_s^-$ is a tree $T$. Let $P$ be the $x_2y_2$-path in $T$. Note that $T$ may be a single vertex if $x_2$ and $y_2$  have been identified. 

For $j\geq s$, if $L_j^- \neq P$, then there is a leaf $l_j \in V(L_j^-) - V(P)$. By Lemma~\ref{k-edge-connected}, $d(l_j) \geq 3$, so there is a cycle $O_j$ consisting of two edges with ends $c$ and $l_j$. Let $L_{j+1} = L_j / O_j$. This results in a sequence $L_s, L_{s+1}, \dots, L_{s+t}$ such that $L_{s+t} - c = P$. In $L_{s+t}$, since each vertex of $P$ has degree at least three, there is at least one $cp$-edge for each $p \in V(P)$. Thus, $L_{s+t}$ is a fan-type graph with $e$ and $f$ as two outer spokes. Moreover, $e$ and $f$ are not parallel unless $L_{s+t}$ is a bond graph with at least three edges. Hence~\ref{type-A} holds.\\

In view of~\ref{type-A}, we may now assume that 
\begin{sublemma}\label{assumption}
$G$ has no cc-minor that contains a type-A $\Theta$-graph.
\end{sublemma}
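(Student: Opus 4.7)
The plan is to recognize that Sublemma~\ref{assumption} is not an independent structural claim about $G$ but rather a without-loss-of-generality hypothesis carved out for the remainder of the proof of Theorem~\ref{3-con cc_minor}. Its justification is a one-line appeal to the already-established Sublemma~\ref{type-A}, so the ``proof'' is really a case split rather than a substantive new argument.

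Concretely, I would split on whether or not $G$ admits a cc-minor that contains a type-A $\Theta$-graph as a subgraph. If such a cc-minor $G'$ exists, then Sublemma~\ref{type-A} applies directly: it produces a cc-minor $H$ of $G$ containing both $e$ and $f$ that is either isomorphic to $B_n$ for some $n\geq 3$ (case (\romannum{1}) of the theorem) or isomorphic to a fan-type graph in which $e$ and $f$ are distinct non-parallel outer spokes (case (\romannum{2})). In either branch, the full conclusion of Theorem~\ref{3-con cc_minor} is already delivered, so nothing further needs to be proved. Consequently, for the remainder of the argument—whose goal is to produce an $H$ satisfying case (\romannum{3})—we are free to assume the complementary hypothesis, which is precisely \ref{assumption}.

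The only point worth flagging is a transitivity remark: the cc-minor $G'$ in the hypothesis of \ref{type-A} may itself be a cc-minor of $G$ rather than $G$ itself, and since a cc-minor of a cc-minor of $G$ is again a cc-minor of $G$ (by concatenating the two cycle-contraction sequences), the negation ``no cc-minor of $G$ contains a type-A $\Theta$-graph'' is exactly what \ref{type-A} allows us to discard. No genuine obstacle is anticipated; the step is purely organizational bookkeeping. Its value is that, combined with Sublemma~\ref{theta-graph}, it lets the subsequent analysis use without restatement the strengthened fact that every $\Theta$-graph on $(x,y)$ containing $e$ and $f$ in any cc-minor of $G$ preserving $e$ and $f$ must be of type B, which is the structural input that will eventually force the $K_4$ outcome of case (\romannum{3}).
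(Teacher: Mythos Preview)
Your reading is correct and matches the paper exactly: the paper simply writes ``In view of~\ref{type-A}, we may now assume that'' and then states \ref{assumption} as a standing hypothesis for the remainder of the proof, precisely the case-split you describe. Your transitivity remark about cc-minors of cc-minors is the only thing you spell out that the paper leaves implicit, and it is a harmless clarification.
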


By~\ref{theta-graph}, $G$ has a type-B $\Theta$-graph $\Theta_B$ as a subgraph. Without loss of generality, suppose $\Theta_B$ is on $(x_1, y_1)$. Throughout the following argument, for any type-B $\Theta$-graph on $(x_1, y_1)$, we denote the $x_2y_1$-path as $P_1$, the $x_1y_1$-path as $P_2$, and the $x_1y_2$-path as $P_3$, as shown in Figure~\ref{fig:theta_B}.

\begin{figure}[htb]
\hbox to \hsize{
\hfil
\resizebox{5cm}{!}{\tikzset{every picture/.style={line width=0.75pt}} 

\begin{tikzpicture}[x=0.75pt,y=0.75pt,yscale=-1,xscale=1]

\draw [color={rgb, 255:red, 0; green, 0; blue, 0 }  ,draw opacity=1 ][dash pattern={on 4.5pt off 4.5pt}]   (102.69,135.56) -- (101.69,188.37) ;
\draw  [fill={rgb, 255:red, 0; green, 0; blue, 0 }  ,fill opacity=1 ] (98.5,191.56) .. controls (98.5,189.8) and (99.92,188.37) .. (101.69,188.37) .. controls (103.45,188.37) and (104.88,189.8) .. (104.88,191.56) .. controls (104.88,193.33) and (103.45,194.75) .. (101.69,194.75) .. controls (99.92,194.75) and (98.5,193.33) .. (98.5,191.56) -- cycle ;
\draw  [fill={rgb, 255:red, 0; green, 0; blue, 0 }  ,fill opacity=1 ] (99.5,135.56) .. controls (99.5,133.8) and (100.92,132.37) .. (102.69,132.37) .. controls (104.45,132.37) and (105.88,133.8) .. (105.88,135.56) .. controls (105.88,137.33) and (104.45,138.75) .. (102.69,138.75) .. controls (100.92,138.75) and (99.5,137.33) .. (99.5,135.56) -- cycle ;

\draw [color={rgb, 255:red, 0; green, 0; blue, 0 }  ,draw opacity=1 ][dash pattern={on 4.5pt off 4.5pt}]   (221.69,135.56) -- (220.69,188.37) ;
\draw  [fill={rgb, 255:red, 0; green, 0; blue, 0 }  ,fill opacity=1 ] (217.5,191.56) .. controls (217.5,189.8) and (218.92,188.37) .. (220.69,188.37) .. controls (222.45,188.37) and (223.88,189.8) .. (223.88,191.56) .. controls (223.88,193.33) and (222.45,194.75) .. (220.69,194.75) .. controls (218.92,194.75) and (217.5,193.33) .. (217.5,191.56) -- cycle ;
\draw  [fill={rgb, 255:red, 0; green, 0; blue, 0 }  ,fill opacity=1 ] (218.5,135.56) .. controls (218.5,133.8) and (219.92,132.37) .. (221.69,132.37) .. controls (223.45,132.37) and (224.88,133.8) .. (224.88,135.56) .. controls (224.88,137.33) and (223.45,138.75) .. (221.69,138.75) .. controls (219.92,138.75) and (218.5,137.33) .. (218.5,135.56) -- cycle ;

\draw    (102.69,135.56) -- (221.69,135.56) ;
\draw    (101.69,191.56) -- (220.69,191.56) ;
\draw    (101.69,191.56) -- (221.69,135.56) ;

\draw (231.5,154.86) node [anchor=north west][inner sep=0.75pt]   {$f$};
\draw (81.5,156.86) node [anchor=north west][inner sep=0.75pt]   {$e$};
\draw (154,121.4) node [anchor=north west][inner sep=0.75pt]    {$P_{1}$};
\draw (141,151.4) node [anchor=north west][inner sep=0.75pt]    {$P_{2}$};
\draw (148,199.4) node [anchor=north west][inner sep=0.75pt]    {$P_{3}$};
\draw (86,198.4) node [anchor=north west][inner sep=0.75pt]    {$x_{1}$};
\draw (84,118.4) node [anchor=north west][inner sep=0.75pt]    {$x_{2}$};
\draw (222.69,199.96) node [anchor=north west][inner sep=0.75pt]    {$y_{2}$};
\draw (225.69,121.96) node [anchor=north west][inner sep=0.75pt]    {$y_{1}$};

\end{tikzpicture}}%
\hfil
}
\caption{$\Theta_B$}\label{fig:theta_B}
\end{figure}

A graph is a {\it four-path connector} if it consists of the edges $e$ and $f$, along with four internally disjoint paths, $P_1$, $P_2$, $P_3$, and $P_4$, that connect the vertex pairs $\{x_2,y_1\}$, $\{x_1,y_1\}$, $\{x_1,y_2\}$, and $\{x_2,y_2\}$, respectively. Moreover, each path $P_i$ contains at least one edge for every $i \in \{1,2,3,4\}$. Throughout the remainder of this proof, in any four-path connector, the labels of these four paths will be consistent with Figure~\ref{fig:k4 subdivision}. Next, we show the following.

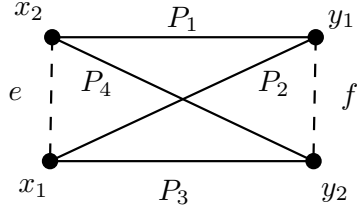
\begin{figure}[htb]
\hbox to \hsize{
\hfil
\resizebox{5cm}{!}{\tikzset{every picture/.style={line width=0.75pt}} 

\begin{tikzpicture}[x=0.75pt,y=0.75pt,yscale=-1,xscale=1]

\draw [color={rgb, 255:red, 0; green, 0; blue, 0 }  ,draw opacity=1 ] [dash pattern={on 4.5pt off 4.5pt}]  (109.69,109.56) -- (108.69,165.56) ;
\draw  [fill={rgb, 255:red, 0; green, 0; blue, 0 }  ,fill opacity=1 ] (105.5,165.56) .. controls (105.5,163.8) and (106.92,162.37) .. (108.69,162.37) .. controls (110.45,162.37) and (111.88,163.8) .. (111.88,165.56) .. controls (111.88,167.33) and (110.45,168.75) .. (108.69,168.75) .. controls (106.92,168.75) and (105.5,167.33) .. (105.5,165.56) -- cycle ;
\draw  [fill={rgb, 255:red, 0; green, 0; blue, 0 }  ,fill opacity=1 ] (106.5,109.56) .. controls (106.5,107.8) and (107.92,106.37) .. (109.69,106.37) .. controls (111.45,106.37) and (112.88,107.8) .. (112.88,109.56) .. controls (112.88,111.33) and (111.45,112.75) .. (109.69,112.75) .. controls (107.92,112.75) and (106.5,111.33) .. (106.5,109.56) -- cycle ;
\draw [color={rgb, 255:red, 0; green, 0; blue, 0 }  ,draw opacity=1 ] [dash pattern={on 4.5pt off 4.5pt}]  (228.69,109.56) -- (227.69,162.37) ;
\draw  [fill={rgb, 255:red, 0; green, 0; blue, 0 }  ,fill opacity=1 ] (224.5,165.56) .. controls (224.5,163.8) and (225.92,162.37) .. (227.69,162.37) .. controls (229.45,162.37) and (230.88,163.8) .. (230.88,165.56) .. controls (230.88,167.33) and (229.45,168.75) .. (227.69,168.75) .. controls (225.92,168.75) and (224.5,167.33) .. (224.5,165.56) -- cycle ;
\draw  [fill={rgb, 255:red, 0; green, 0; blue, 0 }  ,fill opacity=1 ] (225.5,109.56) .. controls (225.5,107.8) and (226.92,106.37) .. (228.69,106.37) .. controls (230.45,106.37) and (231.88,107.8) .. (231.88,109.56) .. controls (231.88,111.33) and (230.45,112.75) .. (228.69,112.75) .. controls (226.92,112.75) and (225.5,111.33) .. (225.5,109.56) -- cycle ;
\draw    (109.69,109.56) -- (228.69,109.56) ;
\draw    (108.69,165.56) -- (227.69,165.56) ;
\draw    (108.69,165.56) -- (228.69,109.56) ;
\draw    (109.69,109.56) -- (227.69,165.56) ;

\draw (93,172.4) node [anchor=north west][inner sep=0.75pt]    {$x_{1}$};
\draw (91,92.4) node [anchor=north west][inner sep=0.75pt]    {$x_{2}$};
\draw (229.69,173.96) node [anchor=north west][inner sep=0.75pt]    {$y_{2}$};
\draw (232.69,95.96) node [anchor=north west][inner sep=0.75pt]    {$y_{1}$};
\draw (88.5,130.86) node [anchor=north west][inner sep=0.75pt]  [color={rgb, 255:red, 0; green, 0; blue, 0 }  ,opacity=1 ]  {$e$};
\draw (160,94.4) node [anchor=north west][inner sep=0.75pt]    {$P_{1}$};
\draw (156,172.4) node [anchor=north west][inner sep=0.75pt]    {$P_{3}$};
\draw (201,123.4) node [anchor=north west][inner sep=0.75pt]    {$P_{2}$};
\draw (121,123.4) node [anchor=north west][inner sep=0.75pt]    {$P_{4}$};
\draw (238.5,128.86) node [anchor=north west][inner sep=0.75pt]  [color={rgb, 255:red, 0; green, 0; blue, 0 }  ,opacity=1 ]  {$f$};

\end{tikzpicture}}%
\hfil
}
\caption{A four-path connector}\label{fig:k4 subdivision}
\end{figure}

\begin{sublemma}\label{sublem:topo_K4}
The graph $G$ has a cc-minor $G_s$ that contains a four-path connector $K$ as a subgraph.
\end{sublemma}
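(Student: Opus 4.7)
The plan is to combine two type-B $\Theta$-graphs via an extremal cycle-contraction argument. First I apply \ref{theta-graph} to the pair $(x_2,y_2)$; by \ref{assumption}, the resulting $\Theta$-graph is type-B, so $G$ contains three internally disjoint $x_2y_2$-paths $e\cup Q'_e$, $Q$, and $Q'_f\cup f$, where $Q'_e$ is an $x_1y_2$-path, $Q'_f$ is an $x_2y_1$-path, and $Q$ is an $x_2y_2$-path, each avoiding $e$ and $f$. Internal disjointness forces $Q$ to miss $x_1$ and $y_1$ as internal vertices. If $Q$ is internally disjoint from the paths $P_1,P_2,P_3$ of the already-chosen $\Theta_B$ on $(x_1,y_1)$, then $P_1,P_2,P_3,Q$ together with $e,f$ form a four-path connector in $G$ itself, so the sublemma holds with $G_s=G$.

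If no such $Q$ exists in $G$, I would set up an extremal argument over triples $(G',\Theta',Q')$ where $G'$ is a cc-minor of $G$ in which $e$ and $f$ remain as distinct non-adjacent edges, $\Theta'$ is a type-B $\Theta$-graph on $(x_1,y_1)$ in $G'$, and $Q'$ is an $x_2y_2$-path in $G'$ avoiding $e$ and $f$. Choose $(G_s,\Theta^{(1)},Q)$ minimizing the number of internal vertices of $Q$ lying in $V(\Theta^{(1)})$. If this minimum is zero, the four-path connector already lives inside $G_s$.

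Otherwise I pick a shared internal vertex $v$ and construct a cycle $C$ in $G_s$ avoiding $e$ and $f$ with $|V(C)\cap\{x_1,x_2,y_1,y_2\}|\le 1$; the latter condition guarantees that contracting $C$ keeps $e,f$ as distinct non-adjacent edges (no pair $\{x_i,y_j\}$ is merged). In $G_s/C$ the image of $\Theta^{(1)}$ is still a type-B $\Theta$-graph on $(x_1,y_1)$ (possibly with one of $\{x_1,y_1,x_2,y_2\}$ relabeled as the contracted vertex), and the image of $Q$ is still an $x_2y_2$-path avoiding $e,f$, now with strictly fewer shared internal vertices, contradicting the minimality of $(G_s,\Theta^{(1)},Q)$. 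When $v$ lies on $P_1$ (symmetrically on $P_3$), the cycle $C$ is obtained by pasting the initial segments $P_1[x_2,v_1]$ and $Q[x_2,v_1]$, where $v_1$ is the first vertex of $P_1$ encountered on $Q$ after $x_2$; the resulting cycle meets only $x_2$ among the four branch vertices.

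The main obstacle is the case $v\in P_2$, the neutral $x_1y_1$-path of $\Theta^{(1)}$. Here $P_2\cup Q$ alone contains no cycle, and the two natural cycles obtained by appending $e$ or $f$ are forbidden from contraction. I intend to handle this case by invoking the auxiliary paths $Q'_e$ and $Q'_f$ from the $\Theta$-graph on $(x_2,y_2)$ to produce a cycle inside $P_2\cup Q\cup Q'_e\cup Q'_f$ that avoids $e,f$ and meets the endpoint constraint, or, failing that, by replacing $\Theta^{(1)}$ within $G_s$ with a different type-B $\Theta$-graph whose neutral path avoids $v$. Carrying out a clean case analysis that accounts for all possible relative positions of $v$ and the auxiliary paths is the chief technical difficulty.
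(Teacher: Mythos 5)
Your argument has a genuine gap, and it is exactly the case you flag at the end: a shared internal vertex $v$ of $Q$ lying on $P_2$. Neither of your proposed fixes is carried out, and neither is likely to work as stated: the auxiliary paths $Q'_e$ and $Q'_f$ pass through $x_1$ and $y_1$ respectively and you have no control over how they meet $\Theta^{(1)}$, while ``replacing $\Theta^{(1)}$ by a type-B $\Theta$-graph whose neutral path avoids $v$'' is an existence claim with no justification. The way out, which the paper uses, is that this configuration never needs a cycle contraction at all: if a path from $x_2$ that is internally disjoint from the $\Theta$-graph ends at an internal vertex $w$ of $P_2$, then the three $x_2y_1$-paths $P_1$, that path followed by $P_2[w,y_1]$, and $e\cup P_3\cup f$ form a type-A $\Theta$-graph on $(x_2,y_1)$ (the last path carries both $e$ and $f$), contradicting~\ref{assumption}. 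A second, unacknowledged problem is your choice of $v_1$ as ``the first vertex of $P_1$ encountered on $Q$'': internal vertices of $Q[x_2,v_1]$ may then lie on $P_2$ or $P_3$, so contracting $C=Q[x_2,v_1]\cup P_1[x_2,v_1]$ can identify interior vertices of $P_2$ or $P_3$ with the contracted vertex and destroy the $\Theta$-structure. You must take $v_1$ to be the first vertex of $V(\Theta^{(1)})$ on $Q$, and then case-split on which branch it lies on --- which forces you back to the unresolved $P_2$ case.

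For comparison, the paper does not carry a fixed $x_2y_2$-path $Q$ through the contractions. It first absorbs, by cycle contraction, all paths from $x_2$ landing on $P_1$ and all paths from $y_2$ landing on $P_3$ (checking that neither $P_1$ nor $P_3$ is wholly contracted, again via a type-A contradiction), then uses Lemma~\ref{xy-path} to preserve three internally disjoint $x_2y_2$-paths, rules out landings on $P_2-\{x_1,y_1\}$ by the type-A argument above, and concludes that some $P_x$ from $x_2$ reaches $P_3-\{x_1\}$ and some $P_y$ from $y_2$ reaches $P_1-\{y_1\}$; these are either internally disjoint (type-A contradiction) or combine into the desired $P_4$. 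If you want to salvage your route, you will need to import that type-A argument for the $P_2$ case and repair the definition of $v_1$; as written, the proposal is incomplete.
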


Let $(G_0, \Theta_0) = (G, \Theta_B)$. For $i\geq 0$, define $Q_i$ to be a path in $G_i$ such that

\begin{enumerate}
    \item[(\romannum{1})] $Q_i$ connects $x_2$ to a vertex $v_i$ on $P_1$ such that all vertices of $Q_i$, except for its two ends, are not in $\Theta_i$, or
    \item[(\romannum{2})] $Q_i$ connects $y_2$ to a vertex $v_i$ on $P_3$ such that all vertices of $Q_i$, except for its two ends, are not in $\Theta_i$.
\end{enumerate}

If some $Q_i$ satisfying (\romannum{1}) exists, then let $C_i$ be the cycle formed by $Q_i$ and the $x_2v_i$-subpath of $P_1$. Similarly, if some $Q_i$ satisfying (\romannum{2}) exists, then let $C_i$ be the cycle formed by $Q_i$ and the $y_2v_i$-subpath of $P_3$. Define $(G_{i+1}, \Theta_{i+1}) = (G_i / C_i, \Theta_i / (E(\Theta_i) \cap E(C_i)))$. This process produces a sequence $(G_0, \Theta_0), (G_1, \Theta_1),\\ \dots, (G_s, \Theta_s)$ such that, in $G_s$, no path satisfies the condition that defines $Q_i$. Note that, in this process, $Q_i$ is never an $x_2y_1$-path; otherwise, we would obtain a type-A $\Theta$-graph on $(x_2,y_1)$ having as its paths, $Q_i$, $P_1$, and the path with edge set $\{e,f\}\cup E(P_3)$, which contradicts~\ref{assumption}. Thus, in $G_s$, the path $P_1$ retains at least one edge. Similarly, in $G_s$, the path $P_3$ also retains at least one edge.

\begin{figure}[htb]
\hbox to \hsize{
\hfil
\resizebox{5cm}{!}{\tikzset{every picture/.style={line width=0.75pt}} 

\begin{tikzpicture}[x=0.75pt,y=0.75pt,yscale=-1,xscale=1]

\draw [color={rgb, 255:red, 0; green, 0; blue, 0 }  ,draw opacity=1 ] [dash pattern={on 4.5pt off 4.5pt}]  (89.69,80.56) -- (88.69,136.56) ;
\draw  [fill={rgb, 255:red, 0; green, 0; blue, 0 }  ,fill opacity=1 ] (85.5,136.56) .. controls (85.5,134.8) and (86.92,133.37) .. (88.69,133.37) .. controls (90.45,133.37) and (91.88,134.8) .. (91.88,136.56) .. controls (91.88,138.33) and (90.45,139.75) .. (88.69,139.75) .. controls (86.92,139.75) and (85.5,138.33) .. (85.5,136.56) -- cycle ;
\draw  [fill={rgb, 255:red, 0; green, 0; blue, 0 }  ,fill opacity=1 ] (86.5,80.56) .. controls (86.5,78.8) and (87.92,77.37) .. (89.69,77.37) .. controls (91.45,77.37) and (92.88,78.8) .. (92.88,80.56) .. controls (92.88,82.33) and (91.45,83.75) .. (89.69,83.75) .. controls (87.92,83.75) and (86.5,82.33) .. (86.5,80.56) -- cycle ;
\draw [color={rgb, 255:red, 0; green, 0; blue, 0 }  ,draw opacity=1 ] [dash pattern={on 4.5pt off 4.5pt}]  (208.69,80.56) -- (207.69,136.56) ;
\draw  [color={rgb, 255:red, 0; green, 0; blue, 0 }  ,draw opacity=1 ][fill={rgb, 255:red, 0; green, 0; blue, 0 }  ,fill opacity=1 ] (204.5,136.56) .. controls (204.5,134.8) and (205.92,133.37) .. (207.69,133.37) .. controls (209.45,133.37) and (210.88,134.8) .. (210.88,136.56) .. controls (210.88,138.33) and (209.45,139.75) .. (207.69,139.75) .. controls (205.92,139.75) and (204.5,138.33) .. (204.5,136.56) -- cycle ;
\draw  [color={rgb, 255:red, 0; green, 0; blue, 0 }  ,draw opacity=1 ][fill={rgb, 255:red, 0; green, 0; blue, 0 }  ,fill opacity=1 ] (205.5,80.56) .. controls (205.5,78.8) and (206.92,77.37) .. (208.69,77.37) .. controls (210.45,77.37) and (211.88,78.8) .. (211.88,80.56) .. controls (211.88,82.33) and (210.45,83.75) .. (208.69,83.75) .. controls (206.92,83.75) and (205.5,82.33) .. (205.5,80.56) -- cycle ;
\draw    (89.69,80.56) -- (208.69,80.56) ;
\draw    (88.69,136.56) -- (207.69,136.56) ;
\draw    (88.69,136.56) -- (208.69,80.56) ;
\draw  [fill={rgb, 255:red, 0; green, 0; blue, 0 }  ,fill opacity=1 ] (127.5,80.56) .. controls (127.5,78.8) and (128.92,77.37) .. (130.69,77.37) .. controls (132.45,77.37) and (133.88,78.8) .. (133.88,80.56) .. controls (133.88,82.33) and (132.45,83.75) .. (130.69,83.75) .. controls (128.92,83.75) and (127.5,82.33) .. (127.5,80.56) -- cycle ;
\draw    (89.69,80.56) .. controls (87.2,58.6) and (129.2,48.6) .. (130.69,80.56) ;
\draw    (166.69,136.56) .. controls (167.2,163.6) and (185.2,162.6) .. (191.2,161.6) .. controls (197.2,160.6) and (204.01,157.63) .. (207.69,136.56) ;
\draw  [fill={rgb, 255:red, 0; green, 0; blue, 0 }  ,fill opacity=1 ] (163.5,136.56) .. controls (163.5,134.8) and (164.92,133.37) .. (166.69,133.37) .. controls (168.45,133.37) and (169.88,134.8) .. (169.88,136.56) .. controls (169.88,138.33) and (168.45,139.75) .. (166.69,139.75) .. controls (164.92,139.75) and (163.5,138.33) .. (163.5,136.56) -- cycle ;

\draw (73,143.4) node [anchor=north west][inner sep=0.75pt]    {$x_{1}$};
\draw (71,63.4) node [anchor=north west][inner sep=0.75pt]    {$x_{2}$};
\draw (209.69,144.96) node [anchor=north west][inner sep=0.75pt]    {$y_{2}$};
\draw (212.69,66.96) node [anchor=north west][inner sep=0.75pt]    {$y_{1}$};
\draw (68.5,101.86) node [anchor=north west][inner sep=0.75pt]  [color={rgb, 255:red, 0; green, 0; blue, 0 }  ,opacity=1 ]  {$e$};
\draw (101,45.4) node [anchor=north west][inner sep=0.75pt]    {$Q_{i}$};
\draw (179,162.4) node [anchor=north west][inner sep=0.75pt]    {$Q_{j}$};
\draw (218.5,99.86) node [anchor=north west][inner sep=0.75pt]  [color={rgb, 255:red, 0; green, 0; blue, 0 }  ,opacity=1 ]  {$f$};

\end{tikzpicture}}%
\hfil
}
\caption{Paths similar to $Q_i$ or $Q_j$ will not appear in $G_{s}$.}
\end{figure}
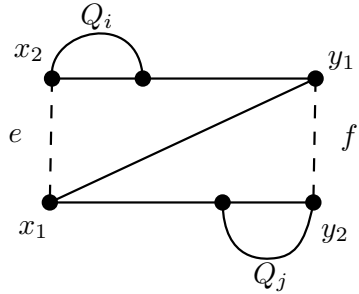

Let $\mathcal{P}_x$ be the collection of paths in $G_s$ that start at $x_2$, end at a vertex in $V(\Theta_s)$, and are \textit{internally disjoint from $\Theta_s$}, meaning they are vertex-disjoint from $\Theta_s$ except at their endpoints. Similarly, let $\mathcal{P}_y$ be the collection of paths that start at $y_2$, end at a vertex in $V(\Theta_s)$, and are internally disjoint from $\Theta_s$.

If there is a path $P \in \mathcal{P}_x$ that ends at a vertex, say $w$, in $V(P_2) - \{x_1\}$, then there is a type-A $\Theta$-graph on $(x_2, y_1)$ (see Figure~\ref{fig:type_A in G_t}), which contradicts~\ref{assumption}. Similarly, we may assume that $\mathcal{P}_y$ does not contain any path that ends at a vertex in $V(P_2) - \{y_1\}$.

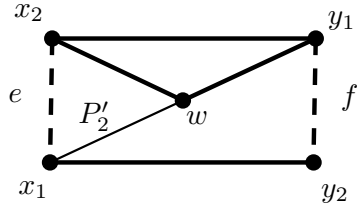
\begin{figure}[htb]
\hbox to \hsize{
\hfil
\resizebox{5cm}{!}{\tikzset{every picture/.style={line width=0.75pt}} 

\begin{tikzpicture}[x=0.75pt,y=0.75pt,yscale=-1,xscale=1]

\draw [color={rgb, 255:red, 0; green, 0; blue, 0 }  ,draw opacity=1 ][line width=1.5]    (109.69,100.56) -- (228.69,100.56) ;
\draw [color={rgb, 255:red, 0; green, 0; blue, 0 }  ,draw opacity=1 ][line width=1.5]    (108.69,156.56) -- (227.69,156.56) ;
\draw [color={rgb, 255:red, 0; green, 0; blue, 0 }  ,draw opacity=1 ][line width=1.5]    (168.69,128.56) -- (228.69,100.56) ;
\draw [color={rgb, 255:red, 0; green, 0; blue, 0 }  ,draw opacity=1 ][line width=1.5]    (109.69,100.56) -- (168.69,128.56) ;
\draw    (108.69,156.56) -- (168.69,128.56) ;
\draw [color={rgb, 255:red, 0; green, 0; blue, 0 }  ,draw opacity=1 ][line width=1.5]  [dash pattern={on 5.63pt off 4.5pt}]  (109.69,100.56) -- (108.69,156.56) ;
\draw  [fill={rgb, 255:red, 0; green, 0; blue, 0 }  ,fill opacity=1 ] (105.5,156.56) .. controls (105.5,154.8) and (106.92,153.37) .. (108.69,153.37) .. controls (110.45,153.37) and (111.88,154.8) .. (111.88,156.56) .. controls (111.88,158.33) and (110.45,159.75) .. (108.69,159.75) .. controls (106.92,159.75) and (105.5,158.33) .. (105.5,156.56) -- cycle ;
\draw  [fill={rgb, 255:red, 0; green, 0; blue, 0 }  ,fill opacity=1 ] (106.5,100.56) .. controls (106.5,98.8) and (107.92,97.37) .. (109.69,97.37) .. controls (111.45,97.37) and (112.88,98.8) .. (112.88,100.56) .. controls (112.88,102.33) and (111.45,103.75) .. (109.69,103.75) .. controls (107.92,103.75) and (106.5,102.33) .. (106.5,100.56) -- cycle ;
\draw [color={rgb, 255:red, 0; green, 0; blue, 0 }  ,draw opacity=1 ][line width=1.5]  [dash pattern={on 5.63pt off 4.5pt}]  (228.69,100.56) -- (227.69,153.37) ;
\draw  [fill={rgb, 255:red, 0; green, 0; blue, 0 }  ,fill opacity=1 ] (224.5,156.56) .. controls (224.5,154.8) and (225.92,153.37) .. (227.69,153.37) .. controls (229.45,153.37) and (230.88,154.8) .. (230.88,156.56) .. controls (230.88,158.33) and (229.45,159.75) .. (227.69,159.75) .. controls (225.92,159.75) and (224.5,158.33) .. (224.5,156.56) -- cycle ;
\draw  [fill={rgb, 255:red, 0; green, 0; blue, 0 }  ,fill opacity=1 ] (225.5,100.56) .. controls (225.5,98.8) and (226.92,97.37) .. (228.69,97.37) .. controls (230.45,97.37) and (231.88,98.8) .. (231.88,100.56) .. controls (231.88,102.33) and (230.45,103.75) .. (228.69,103.75) .. controls (226.92,103.75) and (225.5,102.33) .. (225.5,100.56) -- cycle ;
\draw  [fill={rgb, 255:red, 0; green, 0; blue, 0 }  ,fill opacity=1 ] (165.5,128.56) .. controls (165.5,126.8) and (166.92,125.37) .. (168.69,125.37) .. controls (170.45,125.37) and (171.88,126.8) .. (171.88,128.56) .. controls (171.88,130.33) and (170.45,131.75) .. (168.69,131.75) .. controls (166.92,131.75) and (165.5,130.33) .. (165.5,128.56) -- cycle ;

\draw (93,163.4) node [anchor=north west][inner sep=0.75pt]    {$x_{1}$};
\draw (91,83.4) node [anchor=north west][inner sep=0.75pt]    {$x_{2}$};
\draw (229.69,164.96) node [anchor=north west][inner sep=0.75pt]    {$y_{2}$};
\draw (232.69,86.96) node [anchor=north west][inner sep=0.75pt]    {$y_{1}$};
\draw (88.5,121.86) node [anchor=north west][inner sep=0.75pt]  [color={rgb, 255:red, 0; green, 0; blue, 0 }  ,opacity=1 ]  {$e$};
\draw (168.69,131.96) node [anchor=north west][inner sep=0.75pt]    {$w$};
\draw (120,127.4) node [anchor=north west][inner sep=0.75pt]    {$P_{2} '$};
\draw (238.5,119.86) node [anchor=north west][inner sep=0.75pt]  [color={rgb, 255:red, 0; green, 0; blue, 0 }  ,opacity=1 ]  {$f$};

\end{tikzpicture}}%
\hfil
}
\caption{Deleting the edges of $P_2'$ results in a type-A $\Theta$-graph.}\label{fig:type_A in G_t}
\end{figure}

Note that $G_s$ is obtained from $G$ by repeatedly contracting cycles containing exactly one of $\{x_2,y_2\}$. Because $G$ is $3$-connected, by Lemma~\ref{xy-path}, there are at least three internally disjoint $x_2y_2$-paths in $G_s$. However, if all paths in $\mathcal{P}_x$ end at $x_1$, then every $x_2y_2$-path in $G_s$ contains either $x_1$ or the neighbor of $x_2$ on $P_1$, contradicting the existence of three internally disjoint $x_2y_2$-paths. Therefore, by the choice of $G_s$, we may assume $\mathcal{P}_x$ contains a path $P_x$ that ends at a vertex in $V(P_3) - \{x_1\}$. By symmetry, we may also assume $\mathcal{P}_y$ contains a path $P_y$ that ends at a vertex in $V(P_1) -\{y_1\}$.  If $P_x$ and $P_y$ are internally disjoint, then there is a type-A $\Theta$-graph on $(x_2, y_2)$ (see Figure~\ref{fig:type_A in G_t_2}), which contradicts~\ref{assumption}.

\begin{figure}[htb]
\hbox to \hsize{
\hfil
\resizebox{5cm}{!}{\tikzset{every picture/.style={line width=0.75pt}} 

\begin{tikzpicture}[x=0.75pt,y=0.75pt,yscale=-1,xscale=1]

\draw [color={rgb, 255:red, 0; green, 0; blue, 0 }  ,draw opacity=1 ][line width=1.5]    (109.69,100.56) -- (169.19,100.56) ;
\draw [color={rgb, 255:red, 0; green, 0; blue, 0 }  ,draw opacity=1 ][line width=1.5]    (108.69,156.56) -- (228.69,100.56) ;
\draw [color={rgb, 255:red, 0; green, 0; blue, 0 }  ,draw opacity=1 ][line width=1.5]    (109.69,100.56) -- (168.19,156.56) ;
\draw [color={rgb, 255:red, 0; green, 0; blue, 0 }  ,draw opacity=1 ][line width=1.5]    (169.19,100.56) -- (227.69,156.56) ;
\draw [color={rgb, 255:red, 0; green, 0; blue, 0 }  ,draw opacity=1 ][line width=1.5]    (171.38,156.56) -- (230.88,156.56) ;
\draw [color={rgb, 255:red, 0; green, 0; blue, 0 }  ,draw opacity=1 ][line width=1.5]  [dash pattern={on 5.63pt off 4.5pt}]  (109.69,100.56) -- (108.69,156.56) ;
\draw  [fill={rgb, 255:red, 0; green, 0; blue, 0 }  ,fill opacity=1 ] (105.5,156.56) .. controls (105.5,154.8) and (106.92,153.37) .. (108.69,153.37) .. controls (110.45,153.37) and (111.88,154.8) .. (111.88,156.56) .. controls (111.88,158.33) and (110.45,159.75) .. (108.69,159.75) .. controls (106.92,159.75) and (105.5,158.33) .. (105.5,156.56) -- cycle ;
\draw  [fill={rgb, 255:red, 0; green, 0; blue, 0 }  ,fill opacity=1 ] (106.5,100.56) .. controls (106.5,98.8) and (107.92,97.37) .. (109.69,97.37) .. controls (111.45,97.37) and (112.88,98.8) .. (112.88,100.56) .. controls (112.88,102.33) and (111.45,103.75) .. (109.69,103.75) .. controls (107.92,103.75) and (106.5,102.33) .. (106.5,100.56) -- cycle ;
\draw [color={rgb, 255:red, 0; green, 0; blue, 0 }  ,draw opacity=1 ][line width=1.5]  [dash pattern={on 5.63pt off 4.5pt}]  (228.69,100.56) -- (227.69,153.37) ;
\draw  [color={rgb, 255:red, 0; green, 0; blue, 0 }  ,draw opacity=1 ][fill={rgb, 255:red, 0; green, 0; blue, 0 }  ,fill opacity=1 ] (224.5,156.56) .. controls (224.5,154.8) and (225.92,153.37) .. (227.69,153.37) .. controls (229.45,153.37) and (230.88,154.8) .. (230.88,156.56) .. controls (230.88,158.33) and (229.45,159.75) .. (227.69,159.75) .. controls (225.92,159.75) and (224.5,158.33) .. (224.5,156.56) -- cycle ;
\draw  [color={rgb, 255:red, 0; green, 0; blue, 0 }  ,draw opacity=1 ][fill={rgb, 255:red, 0; green, 0; blue, 0 }  ,fill opacity=1 ] (225.5,100.56) .. controls (225.5,98.8) and (226.92,97.37) .. (228.69,97.37) .. controls (230.45,97.37) and (231.88,98.8) .. (231.88,100.56) .. controls (231.88,102.33) and (230.45,103.75) .. (228.69,103.75) .. controls (226.92,103.75) and (225.5,102.33) .. (225.5,100.56) -- cycle ;
\draw    (108.69,156.56) -- (168.19,156.56) ;
\draw  [fill={rgb, 255:red, 0; green, 0; blue, 0 }  ,fill opacity=1 ] (165,156.56) .. controls (165,154.8) and (166.42,153.37) .. (168.19,153.37) .. controls (169.95,153.37) and (171.38,154.8) .. (171.38,156.56) .. controls (171.38,158.33) and (169.95,159.75) .. (168.19,159.75) .. controls (166.42,159.75) and (165,158.33) .. (165,156.56) -- cycle ;
\draw  [fill={rgb, 255:red, 0; green, 0; blue, 0 }  ,fill opacity=1 ] (166,100.56) .. controls (166,98.8) and (167.42,97.37) .. (169.19,97.37) .. controls (170.95,97.37) and (172.38,98.8) .. (172.38,100.56) .. controls (172.38,102.33) and (170.95,103.75) .. (169.19,103.75) .. controls (167.42,103.75) and (166,102.33) .. (166,100.56) -- cycle ;
\draw    (169.19,100.56) -- (228.69,100.56) ;

\draw (93,163.4) node [anchor=north west][inner sep=0.75pt]    {$x_{1}$};
\draw (91,83.4) node [anchor=north west][inner sep=0.75pt]    {$x_{2}$};
\draw (229.69,164.96) node [anchor=north west][inner sep=0.75pt]    {$y_{2}$};
\draw (232.69,86.96) node [anchor=north west][inner sep=0.75pt]    {$y_{1}$};
\draw (88.5,121.86) node [anchor=north west][inner sep=0.75pt]  [color={rgb, 255:red, 0; green, 0; blue, 0 }  ,opacity=1 ]  {$e$};
\draw (116,119.4) node [anchor=north west][inner sep=0.75pt]    {$P_{x}$};
\draw (207.44,127.96) node [anchor=north west][inner sep=0.75pt]    {$P_{y}$};
\draw (238.5,119.86) node [anchor=north west][inner sep=0.75pt]  [color={rgb, 255:red, 0; green, 0; blue, 0 }  ,opacity=1 ]  {$f$};

\end{tikzpicture}}%
\hfil
}
\caption{The thickened paths form a type-A $\Theta$-graph.}\label{fig:type_A in G_t_2}
\end{figure}
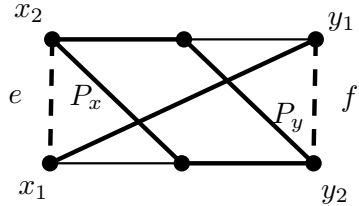
We may now assume that $P_x$ and $P_y$ are not internally disjoint. Therefore, there is an $x_2y_2$-path $P_4$ in $G_{s}$ that is internally disjoint from $\Theta_{s}$. Let $K$ be the subgraph of $G_{s}$ consisting of $\Theta_{s}$ and $P_4$. Hence~\ref{sublem:topo_K4} holds.\\

A four-path connector $F$ in a graph $J$ is {\it spanning} if $V(F)=V(J)$.

\begin{sublemma}\label{sublem:spanning FPC}
    The graph $G_s$ has a cc-minor $G_s'$ that contains a spanning four-path connector $K'$.
\end{sublemma}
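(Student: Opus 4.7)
I plan to prove~\ref{sublem:spanning FPC} by induction on $n := |V(G_s) \setminus V(K)|$. The base case $n = 0$ is immediate: $K$ is already spanning. For the inductive step I will exhibit a cc-minor $\bar G$ of $G_s$ together with a four-path connector $\bar K$ of $\bar G$ containing $e$ and $f$ as non-adjacent edges such that $|V(\bar G) \setminus V(\bar K)| < n$, and then apply the induction hypothesis to $(\bar G, \bar K)$ to obtain the desired spanning four-path connector in a cc-minor of $G_s$.

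To build $(\bar G, \bar K)$, fix $v \in V(G_s) \setminus V(K)$. Since $G_s$ is $3$-edge-connected by Corollary~\ref{k-edge-connected} and hence $2$-connected, Menger's theorem supplies two internally disjoint paths $R_1$ and $R_2$ from $v$ to $V(K)$ whose internal vertices lie in $V(G_s) \setminus V(K)$; let $a, b \in V(K)$ be their endpoints. I would handle three favorable configurations directly.  First, if $a = b$, then $C := R_1 \cup R_2$ is a cycle meeting $K$ only at the single vertex $a$ and containing no edge of $K$; taking $\bar G := G_s/C$ and $\bar K := K$ strictly reduces $n$ while leaving $K$ intact.  Second, if $a \neq b$ both lie on a single path $P_i$ with the $ab$-subpath $\pi$ strictly shorter than $P_i$, then contracting $C := R_1 \cup R_2 \cup \pi$ shortens $P_i$ but keeps it of positive length, so $\bar K$ is a valid four-path connector.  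Third, if $a$ and $b$ lie on two distinct paths $P_i$ and $P_j$ that share a corner $z \in \{x_1, x_2, y_1, y_2\}$, then contracting the cycle formed by $R_1$, $R_2$, the $az$-subpath of $P_i$, and the $bz$-subpath of $P_j$ identifies $z$ with the ear's internal vertices into a single new corner; one checks that $e$ and $f$ remain non-adjacent edges and each of $P_1, P_2, P_3, P_4$ retains at least one edge.

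The main obstacle is the remaining subcase, where either $\{a, b\}$ are the two endpoints of a single $P_i$, or $a$ and $b$ lie on opposite paths of $K$ (the pairs $\{P_1, P_3\}$ or $\{P_2, P_4\}$, which share no common corner). In this subcase every cycle formed from the ear together with a $K$-path from $a$ to $b$ either contracts $e$ or $f$, or collapses some $P_j$ to zero edges, or identifies two vertices of $\{x_1, x_2, y_1, y_2\}$ and thereby makes $e$ and $f$ adjacent. My plan here is to use the $3$-edge-connectivity of $G_s$ to produce a third edge-disjoint $v$-to-$V(K)$ path with endpoint $c$, and observe that since the adjacency graph of $P_1, P_2, P_3, P_4$ (two paths being adjacent if they share a corner) is the $4$-cycle $P_1P_2P_3P_4$, by pigeonhole at least two of $a, b, c$ must land on the same or adjacent paths, reducing to a favorable case after swapping the original pair of paths from $\{R_1, R_2, R_3\}$ for a better one.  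The hardest step will be handling the pathological possibility that the third path shares internal vertices with $R_1$ or $R_2$ in ways that prevent a clean reduction; here I expect to invoke the standing assumption~\ref{assumption} that no cc-minor of $G$ contains a type-A $\Theta$-graph, since an opposite-path ear through $v$ together with a suitable subpath of $K$ and the edges $e, f$ tends to produce precisely such a forbidden type-A $\Theta$-graph.
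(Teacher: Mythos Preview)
Your inductive framework is exactly the paper's (the paper packages the inductive step as a separate sublemma~\ref{sublem:absorbtion}), but the execution has a real gap.

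First, a small error: ``$3$-edge-connected and hence $2$-connected'' is false in general (identify a vertex of two disjoint $K_4$'s), so you cannot get even two internally vertex-disjoint $vK$-paths from $3$-edge-connectivity alone.

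More seriously, your ``favorable'' case~3 is not always favorable. If $a$ happens to be the endpoint of $P_i$ opposite to $z$, then the $az$-subpath is all of $P_i$, and your contraction collapses $P_i$ to a single vertex. Concretely, take $\{a,b\}=\{x_1,x_2\}$. These lie on adjacent paths (e.g.\ $a=x_2\in P_1$, $b=x_1\in P_2$, shared corner $z=y_1$), so you would invoke case~3, but the cycle you contract is $R_1\cup R_2\cup P_1\cup P_2$, which identifies $x_1,x_2,y_1$ and turns $e$ into a loop. This configuration is not on your obstacle list, and it does \emph{not} produce a type-A $\Theta$-graph: any $\Theta$-graph built from the ear $R_1\cup R_2$ together with pieces of $K$ that places $e$ and $f$ on a common branch is forced to route that branch through $x_1$ or $x_2$, which is then shared with the branch using the ear. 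So neither your contraction nor assumption~\ref{assumption} disposes of this case, and your two-path start is genuinely too weak.

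The paper avoids all of this by invoking Lemma~\ref{xy-path} at the outset: since the original $G$ is $3$-connected, the lemma guarantees three $vK$-paths in $G_s$ that share only $v$ and have three \emph{distinct} endpoints $a,b,c$ in $V(K)$. One then shows (sublemma~\ref{sublem:no path contains two}) that no single $P_i$ contains two of $a,b,c$; pigeonhole on the four corners then forces at least one endpoint to be an \emph{internal} vertex of some $P_i$, and a second endpoint to be internal to an adjacent $P_j$. With both endpoints internal, your case-3 contraction goes through cleanly. The moral: use Lemma~\ref{xy-path} and the $3$-connectivity of $G$ to get three vertex-disjoint fans immediately, rather than starting with two paths and trying to bootstrap a third via edge-connectivity.
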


It is easy to check that, in the process of obtaining $G_s$ from $G$, every cycle that has been contracted intersects $K$. In order to prove~\ref{sublem:spanning FPC}, it suffices to prove the following.

\begin{sublemma}\label{sublem:absorbtion}
    Let $D$ be a cc-minor of $G$ that contains a non-spanning four-path connector $F$. Moreover, if $D=G/C_1/C_2/\dots/C_n$ where $C_i$ is a cycle of $G/C_1/C_2/\dots/C_{i-1}$ for all $i\in\{1,2,\dots,n\}$, assume that $V(C_i)\cap V(F)\neq\emptyset$ for all such $i$. Then $D$ has a cycle $C_{n+1}$ such that if $D'=D/C_{n+1}$, then $D'$ has a four-path connector $F'$ for which $|V(D)-V(F)|>|V(D')-V(F')|$. In particular, $V(C_{n+1})\cap V(F)\neq\emptyset$.
\end{sublemma}

Assume that this fails. By Menger's Theorem and Lemma~\ref{xy-path}, we know that, for each $v \in V(D) - V(F)$, there are at least three internally disjoint $vF$-paths. If two such paths have the same end in $V(F)$, then they form a cycle $C$ such that $V(C)$ meets $V(F)$. It is straightforward to see that~\ref{sublem:absorbtion} holds when $D'=D/C$. Thus, we may now assume that there are three internally disjoint $vF$-paths with distinct endpoints $a$, $b$, and $c$ in $V(F)$. Next, we show the following.

\begin{sublemma}\label{sublem:no path contains two}
    None of the paths $P_1, P_2, P_3, P_4$ contains more than one of $a$, $b$, and $c$.
\end{sublemma}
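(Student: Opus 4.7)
I would argue by contradiction. Assume that some $P_i$, say $P_1$ by symmetry, contains two of $a, b, c$; call these $a$ and $b$. The goal is to produce from $D$ a cc-minor $D'$ with a four-path connector $F'$ satisfying $|V(D') \setminus V(F')| < |V(D) \setminus V(F)|$, contradicting the standing assumption that~\ref{sublem:absorbtion} fails.

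The natural object to contract is the cycle $C = Q_a \cup R \cup Q_b$, where $R$ denotes the $ab$-subpath of $P_1$ and $Q_a, Q_b$ are the $va$- and $vb$-paths; because $Q_a, Q_b$ are internally disjoint from $F$ and meet only at $v$, this is indeed a cycle of $D$. Setting $D' = D/C$, the analysis splits on whether $a, b$ are corners of $F$. If both lie strictly between $x_2$ and $y_1$ on $P_1$, then contracting $C$ merges $v$, $a$, $b$, and the internal vertices of $Q_a, Q_b, R$ into one vertex $w$, and the remainder of $P_1$ together with $w$ forms a new $x_2 y_1$-path of length at least two; the other three paths and the edges $e, f$ are unchanged, so $D'$ has a four-path connector on the same four corners with $v$ absorbed. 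If exactly one of $a, b$ is a corner, say $a = x_2$, the same contraction works with $w$ replacing $x_2$ and the $by_1$-portion of $P_1$ serving as the new $P_1'$, which has positive length because $b \neq y_1$. In both subcases $|V \setminus V(F)|$ strictly drops, and the desired contradiction is reached.

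The main obstacle is the remaining subcase $\{a, b\} = \{x_2, y_1\}$, where $C$ identifies two distinct corners and collapses the four-path-connector structure. I would handle this by invoking the third path $Q_c$: since $a, b, c$ are distinct, $c$ lies in $V(F) \setminus \{x_2, y_1\}$, hence either on one of $P_2, P_3, P_4$ or at $x_1$ or $y_2$. Choose the one of $\{x_2, y_1\}$ that can be joined to $c$ by a path in $F$ avoiding $E(P_1)$ and the other of $\{x_2, y_1\}$, and contract the cycle formed by this $F$-path together with $Q_c$ and the appropriate one of $Q_a, Q_b$; this absorbs $v$ while merging only one of $\{x_2, y_1\}$ with the new vertex, so the four-path-connector structure survives on four corners and $|V \setminus V(F)|$ again strictly decreases. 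In the tight configurations where $c$ is itself a corner, such as $c = y_2$, no such $F$-path is available; there I would first contract $C$ to reach a cc-minor in which $e$ and $f$ are adjacent, and either extract a type-A $\Theta$-graph in this cc-minor contradicting~\ref{assumption}, or directly obtain a $B_n$ cc-minor containing both $e, f$, which would already yield case~(i) of Theorem~\ref{3-con cc_minor} and render this branch unnecessary.
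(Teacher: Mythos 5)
Your handling of the main case (at least one of $a,b$ internal to $P_1$) matches the paper: contract the cycle formed by the two $vF$-paths and the $ab$-subpath of $P_1$, and note that the resulting $F'$ is still a four-path connector with $v$ absorbed, contradicting the assumed failure of~\ref{sublem:absorbtion}. The gap is in the case $\{a,b\}=\{x_2,y_1\}$. The paper disposes of it in one line without any contraction: the concatenation $Q_a\cup Q_b$ is a third $x_2y_1$-path internally disjoint from $F$, and together with $P_1$ and the path $e\cup P_3\cup f$ (which runs $x_2\to x_1\to y_2\to y_1$ and contains both $e$ and $f$) it forms a type-A $\Theta$-graph on $(x_2,y_1)$ already sitting inside $D$, contradicting~\ref{assumption}. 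You miss this observation, and your substitute argument does not close the case.

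Concretely, two things go wrong in your treatment of that case. First, your claim that $c$ must lie on $P_2,P_3,P_4$ or at $x_1,y_2$ is false: $c$ could be an internal vertex of $P_1$ (the sublemma is exactly what forbids this, so you cannot assume it), though that subcase could be rescued by applying your earlier argument to the pair $\{a,c\}$. Second, and more seriously, in your ``tight configuration'' (e.g.\ $c=y_2$) you offer only an unproved disjunction --- ``either extract a type-A $\Theta$-graph \dots or directly obtain a $B_n$ cc-minor'' --- without establishing which alternative holds or how; contracting $C$ there identifies $x_2$ with $y_1$ and destroys the four-path-connector structure, so you are no longer proving the sublemma but gesturing at a different conclusion of Theorem~\ref{3-con cc_minor}. (In fact even that configuration admits a direct type-A $\Theta$-graph in $D$: $Q_a\cup Q_c$, $P_4$, and $e\cup P_2\cup f$ form one on $(x_2,y_2)$.) The missing idea throughout is that whenever the two attachment vertices are the two ends of some $P_i$, the two $vF$-paths plus $P_i$ plus the $e$--$f$ path through the remainder of $F$ give a type-A $\Theta$-graph outright, so no contraction or further case analysis is needed.
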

Suppose that $\{a, b\} \subseteq V(P_i)$ for some $i \in \{1,2,3,4\}$. Without loss of generality, suppose that $\{a, b\} \subseteq V(P_1)$. Observe that if $\{a, b\} = \{x_2, y_1\}$, then $D$ contains a type-A $\Theta$-graph, which contradicts~\ref{assumption}. Hence, we may assume that at least one member of $\{a,b\}$ is an internal vertex of $P_1$. Let $C_v$ be the cycle formed by the $va$-path, the $ab$-subpath of $P_1$, and the $bv$-path (see Figure~\ref{fig:absorbtion} (a)). Then the graphs $D' = D / C_v$ and $F' = F / (E(F) \cap E(C_v))$ satisfy~\ref{sublem:absorbtion}, a contradiction. Thus,~\ref{sublem:no path contains two} holds.\\

 Next we show that at least two of $a, b$, and $c$ are not in $\{x_1, x_2, y_1, y_2\}$. Suppose, without loss of generality, that $\{a,b\}\subseteq \{x_1, x_2, y_1, y_2\}$. By~\ref{sublem:no path contains two}, we know $\{a,b\}\in \{\{x_1,x_2\},\{y_1,y_2\}\}$. Suppose that $\{a,b\}=\{x_1,x_2\}$. Then there is a path in $\{P_1,P_2,P_3,P_4\}$ that contains $c$ and one of $a$ and $b$, contradicting~\ref{sublem:no path contains two}. By symmetry, $\{a,b\}\neq \{y_1,y_2\}$ and hence at least two of $a, b$, and $c$ are not in $\{x_1, x_2, y_1, y_2\}$.

Without loss of generality, we assume that $a \in V(P_1) - \{x_2, y_1\}$. Moreover, by~\ref{sublem:no path contains two}, we know that at least one of $b$ and $c$ belongs to $V(F) - (V(P_1) \cup V(P_3))$. By symmetry, we assume $b \in V(P_2) - \{x_1, y_1\}$. Let $C_v$ be the cycle formed by the $va$-path, the $ay_1$-subpath of $P_1$, the $y_1b$-subpath of $P_2$, and the $bv$-path (see Figure~\ref{fig:absorbtion} (b)). Then the graphs $D' = D / C_v$ and $F' = F / (E(F) \cap E(C_v))$ satisfy~\ref{sublem:absorbtion}, a contradiction. Thus~\ref{sublem:absorbtion} holds, and~\ref{sublem:spanning FPC} follows immediately.

\begin{figure}[htb]
\hbox to \hsize{
\hfil
\resizebox{12cm}{!}{\input{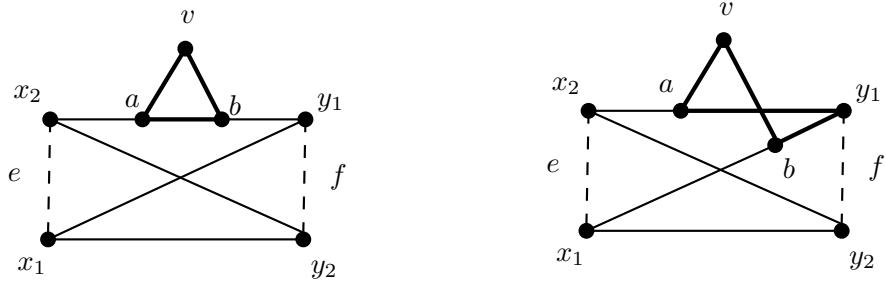}}%
\hfil
}
\caption{Contracting the thickened cycles absorbs $v$ into $F$}\label{fig:absorbtion}
\end{figure}

Next, we prove the following.

\begin{sublemma}\label{sublem:no extra edge}
   Suppose $W$ is a cc-minor of $G$ that contains a spanning four-path connector $R$ and there is a vertex $u\in V(W)-\{x_1,x_2,y_1,y_2\}$. Then $W$ has a cc-minor $W'$ containing a spanning four-path connector $R'$ such that $|V(W) - \{x_1,x_2,y_1,y_2\}| > |V(W') - \{x_1,x_2,y_1,y_2\}|$.
\end{sublemma}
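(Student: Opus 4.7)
The idea is to exploit the $3$-edge-connectedness of $W$, guaranteed by Corollary~\ref{k-edge-connected}, to find an edge outside the four-path connector $R$ incident to $u$, and then contract a carefully chosen cycle through that edge. Since $R$ is spanning, $u$ is an internal vertex of exactly one of the four paths, say $P_i$, so $\deg_R(u)=2$. As $\deg_W(u)\geq 3$, there exists an edge $g=uv$ in $E(W)\setminus E(R)$, and because $R$ is spanning, $v\in V(R)$. The goal is to build a cycle $C$ through $g$ with $e,f\notin E(C)$ and $|V(C)\cap\{x_1,x_2,y_1,y_2\}|\leq 1$, so that contracting $C$ preserves $e$, $f$, and their four distinct endpoints, yielding a cc-minor $W'$ with a spanning four-path connector $R'$ having strictly fewer non-endpoint vertices.

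\textbf{Case analysis.} I would split the argument into three cases based on the location of $v$. In the first case, $v\in V(P_i)$, and one takes $C$ to be $g$ together with the $uv$-subpath of $P_i$; contracting $C$ shortens $P_i$ and leaves the other paths, $e$, and $f$ intact. In the second case, $v\in V(P_j)$ for some $j\neq i$ such that $P_i$ and $P_j$ share a common endpoint $w\in\{x_1,x_2,y_1,y_2\}$, and one takes $C$ to consist of $g$, the $uw$-subpath of $P_i$, and the $wv$-subpath of $P_j$. Then $C$ meets $\{x_1,x_2,y_1,y_2\}$ only at $w$ and avoids $e$ and $f$, so contracting it merges $u$, $v$, $w$, and intermediate vertices into a single vertex that assumes the role of $w$ in the new spanning four-path connector, with both $P_i$ and $P_j$ shortened.

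\textbf{The main obstacle.} The remaining possibility is the ``opposite-path'' case, where $v$ lies on the unique path $P_j$ that shares no endpoint with $P_i$ (so $\{i,j\}\in\{\{1,3\},\{2,4\}\}$). Any cycle through $g$ using only edges of $R$ must also contain $e$, $f$, or the entirety of one of the two remaining paths, and each of these destroys $e$, destroys $f$, or identifies two vertices in $\{x_1,x_2,y_1,y_2\}$. I expect this case to be handled by leveraging the $3$-edge-connectedness of $W$ more globally: either some internal vertex of some $P_k$ admits an extra edge that falls into one of the first two cases (so we reduce at that vertex instead of at $u$), or $g$ can be combined with a second extra edge in the subgraph spanned by $P_i\cup P_j$ to form a cycle that meets $\{x_1,x_2,y_1,y_2\}$ in at most one vertex. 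Verifying that a suitable choice always exists, and that the resulting contraction still yields a spanning four-path connector with $e$ and $f$ intact, is the crux of the argument.
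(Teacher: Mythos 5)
Your cases (a) and (b) --- the other end $v$ of the extra edge lying on the same path as $u$, or on an adjacent path --- coincide exactly with the paper's argument, including the choice of contraction cycle. The gap is in what you call the ``main obstacle'': the paper does not need any further construction there, because the opposite-path case simply cannot occur. Recall that this sublemma sits inside the proof of Theorem~\ref{3-con cc_minor} \emph{after} the standing assumption~\ref{assumption} that $G$ has no cc-minor containing a type-A $\Theta$-graph. If $u$ is an internal vertex of $P_1$ and the extra edge $uv$ has $v$ on the non-adjacent path $P_3$, then $W$ contains a type-A $\Theta$-graph on $(x_2,y_2)$: one $x_2y_2$-path is $x_2 \,e\, x_1 \to_{P_2} y_1 \,f\, y_2$ (containing both $e$ and $f$), a second is $P_4$, and a third runs from $x_2$ along $P_1$ to $u$, across the edge $uv$, and along $P_3$ to $y_2$. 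This contradicts~\ref{assumption}, so the case is vacuous. The symmetric statement handles an edge between $P_2$ and $P_4$.

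Your two speculated repairs --- reducing at a different vertex, or combining two extra edges into a cycle --- are not what is needed and would not obviously terminate; in particular, nothing prevents \emph{every} vertex of $V(W)-\{x_1,x_2,y_1,y_2\}$ from being in the bad configuration if type-A $\Theta$-graphs were allowed. The essential missing ingredient is recognizing that the no-type-A hypothesis is in force and that a ``cross'' edge between non-adjacent paths of the four-path connector manufactures a type-A $\Theta$-graph. Without that observation the proof is incomplete.
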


Without loss of generality, suppose $u \in V(P_1) - \{x_2, y_1\}$. By Lemma~\ref{k-edge-connected}, the graph $W$ is 3-edge-connected, and $d_{W}(u) \geq 3$. Therefore, $u$ is incident to an edge $uw \notin E(R)$. Suppose $u$ and $w$ lie on two non-adjacent paths of $R$ (say, $P_1$ and $P_3$ under our assumption). Figure~\ref{fig:uw-path case 3} shows that when $w\notin\{x_1,y_2\}$, when $w=x_1$, and when $w=y_2$, the graph $W$ contains a type-A $\Theta$-graph, a contradiction to~\ref{assumption}.

\begin{figure}[htb]
\hbox to \hsize{
\hfil
\resizebox{12cm}{!}{\input{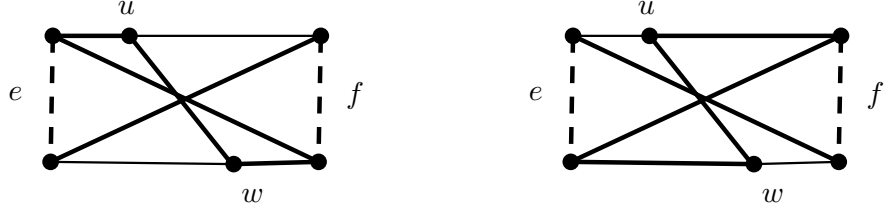}}%
\hfil
}
\caption{The thickened paths, together with the edges $e$ and $f$, form type-A $\Theta$-graphs in each case.}\label{fig:uw-path case 3}
\end{figure}

We may now assume that $w\in V(P_1\cup P_2\cup P_4)-\{x_1,y_2\}$.

(a) Suppose that $w\in V(P_1)$, as shown in Figure~\ref{fig:three cases} (a). Let $C_u$ be the cycle formed by the edge $uw$ and the $wu$-subpath of $P_1$.

(b) Suppose that $w\in V(P_2)-\{x_1,y_1\}$ (or symmetrically $w\in V(P_4)-\{x_2,y_2\}$), as shown in Figure~\ref{fig:three cases} (b). Let $C_u$ be the cycle formed by the edge $uw$, the $wz$-subpath of $P_2$, and the $zu$-subpath of $P_1$. 

In each case, let $W' = W / C_u$ and $R' = R / (E(C_u) \cap E(R))$. Then $R'$ is a spanning four-path connector of $W'$, so~\ref{sublem:no extra edge} holds.
\begin{figure}[htb]
\hbox to \hsize{
\hfil
\resizebox{12cm}{!}{\tikzset{every picture/.style={line width=0.75pt}} 

\begin{tikzpicture}[x=0.75pt,y=0.75pt,yscale=-1,xscale=1]

\draw [color={rgb, 255:red, 0; green, 0; blue, 0 }  ,draw opacity=1 ][line width=1.5]    (341.69,97.56) -- (377.69,115.56) ;
\draw [color={rgb, 255:red, 0; green, 0; blue, 0 }  ,draw opacity=1 ][line width=1.5]    (341.69,97.56) -- (419.69,97.56) ;
\draw [color={rgb, 255:red, 0; green, 0; blue, 0 }  ,draw opacity=1 ][line width=1.5]    (377.69,115.56) -- (419.69,97.56) ;
\draw [color={rgb, 255:red, 0; green, 0; blue, 0 }  ,draw opacity=1 ] [dash pattern={on 4.5pt off 4.5pt}]  (300.69,97.56) -- (299.69,153.56) ;
\draw  [fill={rgb, 255:red, 0; green, 0; blue, 0 }  ,fill opacity=1 ] (296.5,153.56) .. controls (296.5,151.8) and (297.92,150.37) .. (299.69,150.37) .. controls (301.45,150.37) and (302.88,151.8) .. (302.88,153.56) .. controls (302.88,155.33) and (301.45,156.75) .. (299.69,156.75) .. controls (297.92,156.75) and (296.5,155.33) .. (296.5,153.56) -- cycle ;
\draw  [fill={rgb, 255:red, 0; green, 0; blue, 0 }  ,fill opacity=1 ] (297.5,97.56) .. controls (297.5,95.8) and (298.92,94.37) .. (300.69,94.37) .. controls (302.45,94.37) and (303.88,95.8) .. (303.88,97.56) .. controls (303.88,99.33) and (302.45,100.75) .. (300.69,100.75) .. controls (298.92,100.75) and (297.5,99.33) .. (297.5,97.56) -- cycle ;
\draw [color={rgb, 255:red, 0; green, 0; blue, 0 }  ,draw opacity=1 ] [dash pattern={on 4.5pt off 4.5pt}]  (419.69,97.56) -- (418.69,150.37) ;
\draw  [fill={rgb, 255:red, 0; green, 0; blue, 0 }  ,fill opacity=1 ] (415.5,153.56) .. controls (415.5,151.8) and (416.92,150.37) .. (418.69,150.37) .. controls (420.45,150.37) and (421.88,151.8) .. (421.88,153.56) .. controls (421.88,155.33) and (420.45,156.75) .. (418.69,156.75) .. controls (416.92,156.75) and (415.5,155.33) .. (415.5,153.56) -- cycle ;
\draw  [fill={rgb, 255:red, 0; green, 0; blue, 0 }  ,fill opacity=1 ] (416.5,97.56) .. controls (416.5,95.8) and (417.92,94.37) .. (419.69,94.37) .. controls (421.45,94.37) and (422.88,95.8) .. (422.88,97.56) .. controls (422.88,99.33) and (421.45,100.75) .. (419.69,100.75) .. controls (417.92,100.75) and (416.5,99.33) .. (416.5,97.56) -- cycle ;
\draw    (300.69,97.56) -- (341.69,97.56) ;
\draw    (299.69,153.56) -- (418.69,153.56) ;
\draw    (299.69,153.56) -- (377.69,115.56) ;
\draw  [fill={rgb, 255:red, 0; green, 0; blue, 0 }  ,fill opacity=1 ] (338.5,97.56) .. controls (338.5,95.8) and (339.92,94.37) .. (341.69,94.37) .. controls (343.45,94.37) and (344.88,95.8) .. (344.88,97.56) .. controls (344.88,99.33) and (343.45,100.75) .. (341.69,100.75) .. controls (339.92,100.75) and (338.5,99.33) .. (338.5,97.56) -- cycle ;
\draw  [fill={rgb, 255:red, 0; green, 0; blue, 0 }  ,fill opacity=1 ] (374.5,115.56) .. controls (374.5,113.8) and (375.92,112.37) .. (377.69,112.37) .. controls (379.45,112.37) and (380.88,113.8) .. (380.88,115.56) .. controls (380.88,117.33) and (379.45,118.75) .. (377.69,118.75) .. controls (375.92,118.75) and (374.5,117.33) .. (374.5,115.56) -- cycle ;
\draw    (300.69,97.56) -- (418.69,153.56) ;
\draw [color={rgb, 255:red, 0; green, 0; blue, 0 }  ,draw opacity=1 ][line width=1.5]    (150.69,100.56) .. controls (153.2,63.6) and (179.2,59.6) .. (183.69,100.56) ;
\draw  [fill={rgb, 255:red, 0; green, 0; blue, 0 }  ,fill opacity=1 ] (147.5,100.56) .. controls (147.5,98.8) and (148.92,97.37) .. (150.69,97.37) .. controls (152.45,97.37) and (153.88,98.8) .. (153.88,100.56) .. controls (153.88,102.33) and (152.45,103.75) .. (150.69,103.75) .. controls (148.92,103.75) and (147.5,102.33) .. (147.5,100.56) -- cycle ;
\draw [color={rgb, 255:red, 0; green, 0; blue, 0 }  ,draw opacity=1 ][line width=1.5]    (150.69,100.56) -- (183.69,100.56) ;
\draw [color={rgb, 255:red, 0; green, 0; blue, 0 }  ,draw opacity=1 ] [dash pattern={on 4.5pt off 4.5pt}]  (109.69,100.56) -- (108.69,156.56) ;
\draw  [fill={rgb, 255:red, 0; green, 0; blue, 0 }  ,fill opacity=1 ] (105.5,156.56) .. controls (105.5,154.8) and (106.92,153.37) .. (108.69,153.37) .. controls (110.45,153.37) and (111.88,154.8) .. (111.88,156.56) .. controls (111.88,158.33) and (110.45,159.75) .. (108.69,159.75) .. controls (106.92,159.75) and (105.5,158.33) .. (105.5,156.56) -- cycle ;
\draw  [fill={rgb, 255:red, 0; green, 0; blue, 0 }  ,fill opacity=1 ] (106.5,100.56) .. controls (106.5,98.8) and (107.92,97.37) .. (109.69,97.37) .. controls (111.45,97.37) and (112.88,98.8) .. (112.88,100.56) .. controls (112.88,102.33) and (111.45,103.75) .. (109.69,103.75) .. controls (107.92,103.75) and (106.5,102.33) .. (106.5,100.56) -- cycle ;
\draw [color={rgb, 255:red, 0; green, 0; blue, 0 }  ,draw opacity=1 ] [dash pattern={on 4.5pt off 4.5pt}]  (228.69,100.56) -- (227.69,153.37) ;
\draw  [fill={rgb, 255:red, 0; green, 0; blue, 0 }  ,fill opacity=1 ] (224.5,156.56) .. controls (224.5,154.8) and (225.92,153.37) .. (227.69,153.37) .. controls (229.45,153.37) and (230.88,154.8) .. (230.88,156.56) .. controls (230.88,158.33) and (229.45,159.75) .. (227.69,159.75) .. controls (225.92,159.75) and (224.5,158.33) .. (224.5,156.56) -- cycle ;
\draw  [fill={rgb, 255:red, 0; green, 0; blue, 0 }  ,fill opacity=1 ] (225.5,100.56) .. controls (225.5,98.8) and (226.92,97.37) .. (228.69,97.37) .. controls (230.45,97.37) and (231.88,98.8) .. (231.88,100.56) .. controls (231.88,102.33) and (230.45,103.75) .. (228.69,103.75) .. controls (226.92,103.75) and (225.5,102.33) .. (225.5,100.56) -- cycle ;
\draw    (109.69,100.56) -- (150.69,100.56) ;
\draw    (108.69,156.56) -- (227.69,156.56) ;
\draw    (108.69,156.56) -- (228.69,100.56) ;
\draw  [fill={rgb, 255:red, 0; green, 0; blue, 0 }  ,fill opacity=1 ] (180.5,100.56) .. controls (180.5,98.8) and (181.92,97.37) .. (183.69,97.37) .. controls (185.45,97.37) and (186.88,98.8) .. (186.88,100.56) .. controls (186.88,102.33) and (185.45,103.75) .. (183.69,103.75) .. controls (181.92,103.75) and (180.5,102.33) .. (180.5,100.56) -- cycle ;
\draw    (109.69,100.56) -- (227.69,156.56) ;
\draw    (183.69,100.56) -- (227.88,100.56) ;

\draw (159,168.9) node [anchor=north west][inner sep=0.75pt]    {$( a)$};
\draw (350,168.9) node [anchor=north west][inner sep=0.75pt]    {$( b)$};
\draw (88.5,116.86) node [anchor=north west][inner sep=0.75pt]  [color={rgb, 255:red, 0; green, 0; blue, 0 }  ,opacity=1 ]  {$e$};
\draw (145,105.4) node [anchor=north west][inner sep=0.75pt]    {$u$};
\draw (177,105.4) node [anchor=north west][inner sep=0.75pt]    {$w$};
\draw (238.5,114.86) node [anchor=north west][inner sep=0.75pt]  [color={rgb, 255:red, 0; green, 0; blue, 0 }  ,opacity=1 ]  {$f$};
\draw (279.5,113.86) node [anchor=north west][inner sep=0.75pt]  [color={rgb, 255:red, 0; green, 0; blue, 0 }  ,opacity=1 ]  {$e$};
\draw (382.88,118.96) node [anchor=north west][inner sep=0.75pt]    {$w$};
\draw (338,79.4) node [anchor=north west][inner sep=0.75pt]    {$u$};
\draw (425,81.4) node [anchor=north west][inner sep=0.75pt]    {$z$};
\draw (429.5,111.86) node [anchor=north west][inner sep=0.75pt]  [color={rgb, 255:red, 0; green, 0; blue, 0 }  ,opacity=1 ]  {$f$};

\end{tikzpicture}}%
\hfil
}
\caption{Two cases of a $uw$-edge}\label{fig:three cases}
\end{figure}
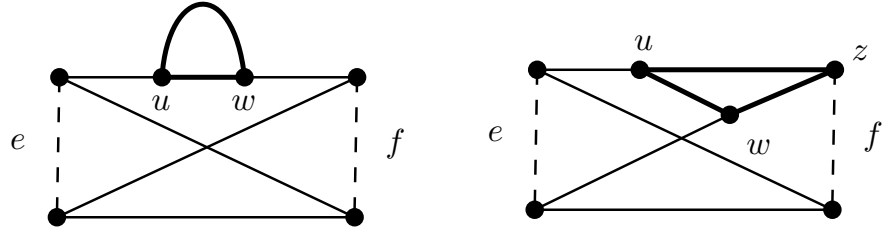

Applying~\ref{sublem:no extra edge} inductively on $G_s'$ and $K'$, we conclude that $G_s'$ has a cc-minor $H$ containing a four-path connector. Moreover, $V(H) = \{x_1, x_2, y_1, y_2\}$. It is not difficult to see that the edges $e$ and $f$ are non-adjacent in $H$ and that the simplification of $H$ is isomorphic to $K_4$. Suppose that there is a pair of parallel edges $g$ and $h$ in $E(H)$ that are not parallel to $e$ or $f$. Because the simplification of $H$ is isomorphic to $K_4$, there is a 4-cycle $U$ that contains $e,f$ and $g$. However, the graph with edge set $U\cup h$ is a type-A $\Theta$-graph, a contradiction to~\ref{assumption}. Hence, Theorem~\ref{3-con cc_minor} holds.
\end{proof}
\section{cc-minors of large $3$-connected graphs}\label{large 3-con}

In this section, we determine a set of unavoidable cc-minors of sufficiently large 3-connected graphs.

\begin{theorem}\label{large_3_con}
    For every integer $t\geq 3$, there is a function $f_{\ref{large_3_con}}(t)$ such that if a simple $3$-connected graph $G$ has more than $f_{\ref{large_3_con}}(t)$ edges, then $G$ has a fan-type graph $F_{t_1, t_2, \dots, t_n}$ as a cc-minor such that $\sum_{i=1}^n t_i \geq t$.
\end{theorem}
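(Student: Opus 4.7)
The plan is to split into two cases based on the maximum degree $\Delta(G)$. If $\Delta(G) \geq t$, I would choose a vertex $v$ with $\deg(v) \geq t$ and any two edges $e, f$ incident to $v$. Since $G$ is $3$-connected, $G - v$ is $2$-connected and therefore $2$-edge-connected. By the adjacent-edges construction in sublemma~\ref{e,f_adjacent} of the proof of Theorem~\ref{3-con cc_minor}, together with Corollary~\ref{cor:contracting subgraph}, $B_{\deg(v)}$ is a cc-minor of $G$, which is the fan-type $F_{\deg(v)}$ with $\sum t_i = \deg(v) \geq t$.

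Otherwise $\Delta(G) < t$. Then $2|E(G)| \leq \Delta(G)\cdot|V(G)| < t\cdot|V(G)|$, so choosing $f_{\ref{large_3_con}}(t)$ sufficiently large forces $|V(G)|$ to be large. I would then invoke the Oporowski--Oxley--Thomas theorem, which guarantees that every sufficiently large $3$-connected graph contains either a wheel $W_m$ or a complete bipartite graph $K_{3,m}$ as a minor, for $m$ as large as desired.

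Both $W_m$ and $K_{3,m}$ admit fan-type cc-minors of size at least $m$. For $K_{3,m}$ with bipartition $\{a,b,c\}\cup\{w_1,\ldots,w_m\}$, partitioning its vertex set as $V_1 = \{a,b,w_1,\ldots,w_m\}$ and $V_2 = \{c\}$ makes $G[V_1]\cong K_{2,m}$ which is $2$-edge-connected for $m\geq 2$, and $G[V_2]$ a single vertex; by Proposition~\ref{prop:structure}, $B_m$ is a cc-minor. For $W_m$, contracting the triangle formed by the hub and two consecutive rim vertices yields the fan-type $F_{2,1,\ldots,1,2}$ with $\sum t_i = m$.

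The hard part will be lifting these cc-minor constructions from $W_m$ or $K_{3,m}$ up to $G$. Given a minor model $\{G_v : v \in V(W_m)\}$ or $\{G_v : v \in V(K_{3,m})\}$ in $G$, the model components are connected but may fail to be $2$-edge-connected, and there may be extra edges of $G$ between non-adjacent model components that violate the criteria of Proposition~\ref{prop:structure}. I would handle this by performing additional cycle contractions within each component to shrink it to a $2$-edge-connected subgraph (or a single vertex), and absorbing any extra inter-component edges into an appropriate adjacent component by contracting cycles that pass through the offending edge and two model components. Setting $f_{\ref{large_3_con}}(t)$ to exceed the Oporowski--Oxley--Thomas threshold for $m \geq t$ plus a suitable overhead for this minor-to-cc-minor conversion then completes the proof.
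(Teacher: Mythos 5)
Your high-level plan (reduce to many vertices, invoke Oporowski--Oxley--Thomas, extract a fan) matches the paper's, and your degree-$\geq t$ case and your fan computations inside the abstract graphs $W_m$ and $K_{3,m}$ are fine. But the step you flag as ``the hard part'' --- lifting the fan from a \emph{minor model} of $W_m$ or $K_{3,m}$ to a cc-minor of $G$ --- is a genuine gap, and the fix you sketch does not work. First, Proposition~\ref{prop:structure} requires the pieces $G_v$ to be $2$-edge-connected or single vertices, to partition \emph{all} of $V(G)$, and to have the $f_e$'s be exactly the complement of $\bigcup E(G_v)$; a minor model satisfies none of this. A branch set whose induced subgraph is a tree on several vertices contains no cycle at all, so no amount of ``cycle contractions within each component'' can turn it into a $2$-edge-connected subgraph or a single vertex; you are forced to route cycles through other branch sets, and each such contraction identifies two branch sets and degrades the model. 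Second, ``absorbing'' an edge between non-adjacent branch sets by contracting a cycle through it again merges branch sets, and you give no argument that only boundedly many such merges occur --- in a dense model there can be far more offending edges than the model can survive. Finally, you also need to account for the vertices and edges of $G$ lying entirely outside the model, which your sketch does not address. The paper circumvents all of this by using the \emph{subdivision} form of Oporowski--Oxley--Thomas (which is why the third outcome $V_k$ appears), so that the structure is an honest subgraph; it then abandons the wheel/$K_{3,k}$ structure entirely and keeps only a cycle $C$ and a $C$-island $I$ joined by many bridges, contracts everything outside $I$ to one vertex, reduces $I$ to a tree by cycle contractions, and applies a weighted-tree Ramsey lemma (Lemma~\ref{weighted_tree}) to locate the fan. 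That island-plus-weighted-tree argument is the real content of the proof and is what your proposal is missing.
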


Before beginning the proof of Theorem~\ref{large_3_con}, we present three lemmas. The first is a Ramsey-type result for $3$-connected graphs; the second is a result for weighted trees, where we use the latter as auxiliary graphs in our analysis.

Let $k$ be an integer exceeding two. Figure~\ref{fig:W_k,L_k,V_k} shows three families of graphs that we now describe. The \textit{$k$-rung ladder} $L_k$ has vertices $v_1, v_2, \dots, v_k,u_1, u_2, \dots, u_k$, where $v_1, v_2, \dots, v_k$ and $u_1, u_2, \dots, u_k$ form paths in the listed order, and $v_i$ is adjacent to $u_i$ for each $i \in\{ 1, 2, \dots, k\}$. The graph $V_k$ is obtained from $L_k$ by adding an edge between $v_1$ and $v_k$ and contracting the edges joining $u_1$ to $v_1$ and $u_k$ to $v_k$. The $k$-spoke wheel is denoted by $W_k$. Oporowski, Oxley, and Thomas~\cite{OOT1993} characterized a set of unavoidable structures of large $3$-connected graphs as follows.

\begin{figure}[htb]
\hbox to \hsize{
\hfil
\resizebox{12cm}{!}{\input{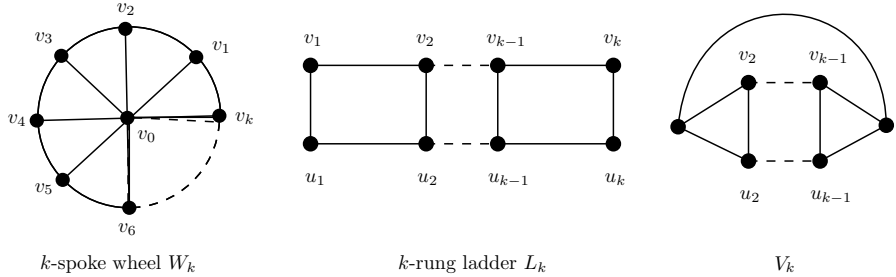}}%
\hfil
}
\caption{Some important graphs for Lemma~\ref{3-con OOT}}
\label{fig:W_k,L_k,V_k}
\end{figure}
\begin{lemma}\label{3-con OOT}
   For every integer $k\geq 3$, there is a function $f_{\ref{3-con OOT}}(k)$ such that every $3$-connected graph with at least $f_{\ref{3-con OOT}}(k)$ vertices contains a subgraph isomorphic to a subdivision of one of $W_k$, $V_k$, and $K_{3,k}$.
\end{lemma}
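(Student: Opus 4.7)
The plan is to combine a Ramsey-type analysis with classical results on long cycles in $3$-connected graphs, dichotomising on the maximum degree. Choose a threshold $d = d(k)$ and take $f_{\ref{3-con OOT}}(k)$ large enough that either some vertex of $G$ has degree at least $d$ or, when $\Delta(G) < d$, the sheer size of $V(G)$ forces a very long cycle in $G$.

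In the high-degree case, let $v$ be a vertex with at least $d$ neighbours. Since $G$ is $3$-connected, $G-v$ is $2$-connected, so by iterated applications of Menger's theorem I can build, inside $G-v$, a large collection of internally disjoint paths whose endpoints lie in $N(v)$. Classifying pairs of neighbours by the structure of such connecting paths and invoking Ramsey's theorem yields a structured subfamily: either a large set of cycles through $v$ pairwise intersecting only at $v$, which (combined with two further common linking points supplied by the $3$-connectivity of $G$) gives a subdivision of $K_{3,k}$, or a fan pattern centred at $v$ that already subdivides $W_k$.

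In the bounded-degree case, $\Delta(G) < d$ together with $|V(G)| \geq f_{\ref{3-con OOT}}(k)$ forces a long path, and then the standard extension argument using $3$-connectivity upgrades this to a cycle $C$ of length at least some $r(k)$. Each bridge of $C$ in $G$ has at least two feet on $C$, so pick a representative pair of feet per bridge. The resulting set of unordered pairs on the cyclic sequence $V(C)$ can be Ramsey-reduced into one of three pure patterns: either many pairs share an endpoint, producing a vertex meeting many bridges which, by the earlier argument, yields a subdivision of $K_{3,k}$; or the pairs form a large nested non-crossing family, yielding a ladder $L_k$ together with the surrounding arcs of $C$, which assembles into a subdivision of $V_k$; or the pairs form a large pairwise crossing family, yielding a subdivision of $W_k$ via the classical correspondence between crossing chord patterns and the wheel.

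The main obstacle lies in the third step: bridges can be complicated subgraphs with many feet, and reducing the bridge structure to a clean pattern of unordered pairs on $C$, in a way that delivers a genuine topological subdivision rather than a mere minor, requires careful preliminary trimming. In particular, one must iterate the choice of representative feet so that the Ramsey-selected arcs correspond to pairwise internally disjoint $C$-paths in $G$, and then verify that each of the three Ramsey outcomes really gives a subdivision of the intended target with its hub, ladder, or bipartite structure preserved. Once the parameters $d(k)$ and $r(k)$ are fixed via the quantitative versions of Menger, Ramsey, and the Erd\H{o}s--Gallai long-cycle theorem, the remaining case analysis is routine.
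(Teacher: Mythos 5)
First, a point of orientation: the paper does not prove this lemma at all --- it is quoted verbatim from Oporowski, Oxley, and Thomas \cite{OOT1993}, where establishing it is the main content of a substantial research paper. So your proposal is not competing with a short in-text argument; it is attempting to reprove a hard theorem, and the places you label ``routine'' or defer (``the main obstacle lies in the third step\dots the remaining case analysis is routine'') are exactly where the real work lives. As written, the proposal is a strategy outline, not a proof.

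Beyond the deferrals, several concrete steps are wrong. (1) In the high-degree case, a flower of $k$ cycles pairwise meeting only at $v$ does \emph{not} yield a $K_{3,k}$-subdivision, even using $3$-connectivity: the wheel $W_n$ itself has such a flower at its hub for $k=\lfloor n/2\rfloor$, yet contains no subdivision of $K_{3,4}$ since it has only one vertex of degree exceeding three. The ``two further common linking points'' you invoke need not exist, and proving that their absence forces the fan outcome is the hard dichotomy, not a consequence of Menger plus Ramsey. (2) In the bounded-degree case, Erd\H{o}s--Gallai is the wrong tool: a graph with $\Delta(G)<d$ has at most $dn/2$ edges, so that theorem only yields cycles of length about $d$, independent of $n$. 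You need the depth-of-a-DFS-tree bound for a long path and then a genuine (nontrivial) lemma converting a long path in a $3$-connected graph into a long cycle. (3) The assignment of Ramsey outcomes to targets is incorrect. A family of pairwise crossing, internally disjoint $C$-paths with distinct feet produces a subgraph of maximum degree three (a M\"obius-ladder-like structure); a cubic graph cannot contain a subdivision of $W_k$ for $k\geq 4$, so the crossing case cannot deliver the wheel --- it in fact delivers $V_k$. Conversely, a purely nested family together with $C$ is \emph{not} a subdivision of $V_k$: each pair of feet $\{a_i,b_i\}$ of an intermediate path is a $2$-cut of that subgraph, whereas $V_k$ is $3$-connected; one must use the $3$-connectivity of $G$ to find an additional path crossing the nested family before $V_k$ appears. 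Until the degree dichotomy, the path-to-cycle step, and the correct bridge trichotomy (with the augmentation in the nested case) are carried out in full, the lemma is not proved; the honest course here is to cite \cite{OOT1993}, as the paper does.
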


A \textit{weighted tree} is a tree $T$ together with a weight function $w$ such that each vertex $v$ is assigned a non-negative integer-valued weight $w(v)$, and $w(A) = \sum_{v \in A} w(v)$ for each $A\subseteq V(T)$.

In the next lemma, we use the notion of the {\it center} of a graph. This is the set of vertices with the smallest maximum distance to other vertices. It is well known that, in a tree, the center consists of either a single vertex or two adjacent vertices of the tree.

\begin{lemma}
\label{weighted_tree}
    For every integer $t > 1$, there is a function $f_{\ref{weighted_tree}}(t)$ such that every weighted tree $T$ with $w(V(T)) > f_{\ref{weighted_tree}}(t)$ contains one of the following:
    \begin{itemize}
        \item[(\romannum{1})] a vertex $v$ such that $d(v) > t$;
        \item[(\romannum{2})] a path $P$ such that $|V(P)| > t$; or
        \item[(\romannum{3})] a path $P$ such that $w(V(P)) > t$.
    \end{itemize}
\end{lemma}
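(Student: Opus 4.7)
The plan is to prove the lemma by contrapositive: assume none of (i), (ii), (iii) holds, and derive an explicit upper bound on $w(V(T))$ as a function of $t$. This upper bound will then serve as $f_{\ref{weighted_tree}}(t)$.

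First I would extract two structural consequences. The failure of (i) gives $\Delta(T)\le t$. The failure of (ii) says $T$ contains no path on more than $t$ vertices, so the diameter of $T$ is at most $t-1$, and hence every vertex of $T$ lies within distance $r:=\lceil (t-1)/2\rceil$ of a (single or two-vertex) center of $T$. The failure of (iii) applied to the trivial one-vertex ``path'' at each $v$ says $w(v)\le t$ for every $v\in V(T)$.

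Next I would count. Starting from any vertex $c$, a BFS in a tree of maximum degree at most $t$ produces at most $N(t):=1+t\sum_{i=0}^{r-1}(t-1)^{i}$ vertices within distance $r$. Since every vertex of $T$ is within distance $r$ of some center vertex, and the center has at most two vertices, $|V(T)|\le 2N(t)$. Combining with $w(v)\le t$ for every $v$, I get
\[
w(V(T))\ \le\ t\cdot|V(T)|\ \le\ 2t\,N(t),
\]
so setting $f_{\ref{weighted_tree}}(t):=2t\,N(t)$ completes the proof.

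There is essentially no serious obstacle; the only point that needs a sentence of care is the interpretation of ``path'' as including single vertices, which is what lets the trivial path $P=v$ give the pointwise weight bound $w(v)\le t$, and the diameter bound coming from (ii), which uses that any longest path between two vertices realises the diameter. Both are standard.
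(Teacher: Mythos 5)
Your proposal is correct and follows essentially the same route as the paper: bound $|V(T)|$ by combining the degree bound from (i) with the radius bound from (ii) via the center, then use the pointwise bound $w(v)\le t$ forced by (iii) to get $w(V(T))\le t\,|V(T)|$. The only cosmetic differences are that the paper argues directly (producing a vertex of weight exceeding $t$ by pigeonhole) rather than by contrapositive, and uses the slightly cruder count $|V(T)|\le\sum_{j=0}^{\lfloor t/2\rfloor}t^j$; also note that since the weights are non-negative, any path through $v$ already has weight at least $w(v)$, so the appeal to single-vertex paths is not needed.
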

\begin{proof}
    We prove that $f_{\ref{weighted_tree}}(t) = \sum_{i=1}^{\lfloor \frac{t}{2} \rfloor+1} t^i$ satisfies the condition. Let $T$ be a tree. We may assume that both the maximum degree and the number of vertices in a longest path do not exceed $t$ otherwise (\romannum{1}) or (\romannum{2}) holds. By grouping the vertices of $T$ based on their distance from a center vertex of $T$, we see that $|V(T)| \leq \sum_{j=0}^{\lfloor \frac{t}{2} \rfloor} t^j$. However, since $w(T) \geq 1+\sum_{i=1}^{\lfloor \frac{t}{2} \rfloor+1} t^i$, there must be a vertex $v$ such that $w(v) > t$. Therefore, each path $P$ containing $v$ satisfies $w(V(P)) > t$, so (\romannum{3}) holds.
\end{proof}

We are now ready to prove the main result of this section.\\

\noindent{\it Proof of Theorem~\ref{large_3_con}.}
Let $H$ be a subgraph of $G$. An edge $e$ of $E(G)- E(H)$ is an \textit{$H$-bridge} if $e$ is incident with at least one vertex of $H$. We call each connected component of $G - V(H)$ an \textit{$H$-island}. Note that an $H$-bridge is either
\begin{itemize}
    \item[(\romannum{1})] an edge having both vertices in $V(H)$, or
    \item[(\romannum{2})] an edge having one vertex in $V(H)$ and one vertex in an $H$-island.
\end{itemize}

We first prove the following.
\begin{sublemma}\label{cycle to fan}
    If $G$ has a cycle $C$ and a $C$-island $I$ such that there are at least $f_{\ref{weighted_tree}}(t)$ $C$-bridges between $C$ and $I$, then $G$ has a fan-type graph $F_{t_1, t_2, \dots, t_n}$ as a cc-minor where $\sum_{i=1}^n t_i \geq t$.
\end{sublemma}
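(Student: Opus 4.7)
The plan is to apply Lemma~\ref{weighted_tree} to a spanning tree of $I$ weighted by the number of $C$-bridges, and then in each of the three resulting cases, construct a fan-type cc-minor by explicit cycle contractions. First, I would reduce to the case where $I$ is the only $C$-island of $G$: since $G$ is $3$-connected, every other $C$-island $J$ has at least two $C$-bridges, so $J$ can be absorbed into $C$ by contracting a cycle formed from two of its bridges and an arc of $C$. After this preprocessing, every vertex of $G$ lies in $V(C) \cup V(I)$, and the number of $C$-bridges between $C$ and $I$ is still at least $f_{\ref{weighted_tree}}(t)$. I would then let $T$ be a spanning tree of $I$ and set $w(v) = \deg(v, V(C))$ for each $v \in V(T)$, so that $w(V(T)) \geq f_{\ref{weighted_tree}}(t)$; applying Lemma~\ref{weighted_tree} yields one of: (i) a vertex $v \in V(T)$ with $d_T(v) > t$, (ii) a path $P$ of $T$ with $|V(P)| > t$, or (iii) a path $P$ of $T$ with $w(V(P)) > t$.

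Case (iii) is the most direct: given such a path $P = p_1 p_2 \cdots p_s$, I would aim to exhibit a fan-type cc-minor through the partition $V(G) = U \cup V_1 \cup \cdots \cup V_s$, where $U = V(C)$ and each $V_i$ is built around $p_i$ by absorbing the subtree of $T$ hanging off $p_i$ after the edges of $P$ are removed. Verifying the hypotheses of Proposition~\ref{prop:structure} on this partition then produces a fan-type cc-minor $F_{t_1, \dots, t_s}$ with $t_i = \sum_{v \in V_i} w(v)$ and $\sum t_i \geq w(V(P)) > t$. Cases (i) and (ii) reduce to case (iii) by analogous manipulations: in case (i), $t+1$ branches at the high-degree vertex can be combined into a single long weighted path via cycle contractions that bring their bridges onto a common path; in case (ii), higher-weight vertices lying off the long path are absorbed into it using cycles through $C$ and the $C$-bridges.

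The main obstacle will be the bookkeeping in case (iii), where two complications must be addressed. First, each induced subgraph $G[V_i]$ must be $2$-edge-connected or a single vertex; if $G[V_i]$ is tree-like, this is repaired by absorbing a few $C$-bridges and a short arc of $C$ into $V_i$ via preparatory contractions. Second, non-tree edges of $I$ may produce ``chord'' edges joining non-consecutive $V_i$'s, or multiple edges between consecutive pieces; such chords are resolved by merging the intervening pieces into one, which shortens the path but preserves the fan structure. Additionally, any $V_i$ with $w(V_i) = 0$ is absorbed into an adjacent piece to meet the positivity requirement on the $t_i$'s. Each repair consumes a bounded number of $C$-bridges, and choosing $f_{\ref{weighted_tree}}(t)$ generously enough ensures that at least $t$ spokes survive in the final fan-type graph, completing the construction.
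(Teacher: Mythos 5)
Your high-level plan (weight a tree structure of $I$ by the number of bridges to $C$ and invoke Lemma~\ref{weighted_tree}) matches the paper's, but the order of operations creates a genuine gap. You apply the weighted-tree lemma to a spanning tree $T$ of the \emph{original} island $I$ and then try to verify Proposition~\ref{prop:structure} on the partition $U \cup V_1 \cup \dots \cup V_s$. The pieces $V_i$ that contain hanging subtrees of $T$ induce subgraphs that are in general not $2$-edge-connected (a pendant tree edge with no chord inside $V_i$ lies in no cycle of $G[V_i]$), and your proposed repair --- absorbing an arc of $C$ into $V_i$ --- is self-defeating: the spokes attached to that arc then become edges between $V_i$ and other rim pieces, recreating exactly the chord problem you are trying to eliminate, and removing the arc from $U$ can leave $G[U]$ non-$2$-edge-connected as well. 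Moreover, the number of repairs is governed by the size and shape of $I$, not by $t$, so ``choosing $f_{\ref{weighted_tree}}(t)$ generously'' does not close the argument. Your treatment of cases (i) and (ii) as reductions to case (iii) is also off: a vertex of $T$ of degree exceeding $t$ should yield a bond graph $B_n$ directly (which is a fan-type graph on a single-vertex path), and a long path yields a fan-type graph with one parallel class of spokes per rim vertex; no ``combination of branches into a path'' is needed or, as described, meaningful.

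The paper avoids all of this by contracting \emph{first} and applying the weighted-tree lemma \emph{second}: it contracts all of $G - V(I)$ to a single hub vertex $c$ (this absorbs $C$, its chords, and every other island at once), then repeatedly contracts cycles of the island avoiding $c$ until what remains off $c$ is literally a tree --- so every remaining non-tree edge is incident with $c$ and there are no chords left to handle. Only then is that tree weighted (by the multiplicity of edges to $c$) and Lemma~\ref{weighted_tree} applied. Pruning the leaves not on the chosen subtree $T'$ is then immediate: by Corollary~\ref{k-edge-connected} the current graph is $3$-edge-connected, so each such leaf carries at least two parallel edges to $c$, which form a $2$-cycle to contract. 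If you reorganize your argument so that all cycles inside the island are contracted before you choose the tree and the path, your case analysis collapses to the paper's and the bookkeeping you worry about disappears.
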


First, observe that $G - V(I)$ does not have a cut edge. Thus we can contract all of the edges of $G-V(I)$ by successively contracting a sequence of cycles. The resulting graph $G'$ is obtained from $G[V(C) \cup V(I)]$ by contracting $C$. We denote by $c$ the vertex that results by identifying all of the vertices of $C$. Note that $d_{G'}(c) \geq f_{\ref{weighted_tree}}(t)$ and the neighbors of $c$ in $G'$ are contained in $V(I)$. Let $G_0 = G'$. If $G_i - c$ has a cycle $C_i$, define $G_{i+1} = G_i / C_i$. This process results in a sequence $G_0, G_1, \dots, G_s$ such that $G_s - c$ is a tree $T$.

Now, define a weight function $w$ on $V(T)$ by, for each vertex $v$ of $T$, letting $w(v)$ be the number of edges joining $c$ and $v$. Clearly, $w(V(T)) = d_{G'}(c) \geq f_{\ref{weighted_tree}}(t)$. By Lemma~\ref{weighted_tree}, $T$ has a subgraph $T'$ that is one of the following:
\begin{enumerate}[label=(\roman*), itemsep=0pt, topsep=0pt]
    \item a vertex $v$ such that $d(v) > t$;
    \item a path $P$ such that $|V(P)| > t$; or
    \item a path $P$ such that $w(V(P)) > t$.
\end{enumerate}

Let $T_0=T$. Assume that we have defined a sequence $(G_s,T_0),(G_{s+1},T_1),\dots,\\(G_{s+i},T_i)$ where each $G_{s+j}$ is $3$-edge-connected having the tree $T_j$ as a subgraph. If $T_i \neq T'$, then $T_i$ has a leaf $l \notin V(T')$. Since $G_{s+i}$ is $3$-edge-connected, there are two edges joining $c$ and $l$ that form a cycle $O_i$ in $G_{s+i}$. Define $G_{s+i+1} = G_{s+i}/ O_i$ and $T_{i+1} = T-l$. Repeating this process, we eventually obtain a pair $(G_{s+h},T_h)$ with $T_h=T'$. By the choice of $T'$, we see that if $T'$ is a vertex of degree more than $t$ in $G_{s+h}$, then $G_{s+h}$ is a bond graph with more than $t$ edges; if $T'$ is a path on more than $t$ vertices, then $G_{s+h}$ is a fan-type graph with more than $t$ sets of parallel spokes; and if $T'$ is a path $P$ such that $w(V(P)) > t$, then $G_{s+h}$ is a fan-type graph with more than $t$ spokes. Therefore,~\ref{cycle to fan} holds.\\

To complete the proof of the theorem, we shall show that the required result holds for the function $f_{\ref{large_3_con}}(t) = {f_{\ref{3-con OOT}} \circ f_{\ref{weighted_tree}}(t) \choose 2}$. Since $G$ is simple and has more than ${f_{\ref{3-con OOT}} \circ f_{\ref{weighted_tree}}(t) \choose 2}$ edges, $G$ has more than $f_{\ref{3-con OOT}} \circ f_{\ref{weighted_tree}}(t)$ vertices. By Lemma~\ref{3-con OOT}, $G$ has a subgraph isomorphic to a subdivision of one of $W_k$, $V_k$, or $K_{3, k}$ where $k=f_{\ref{weighted_tree}}(t)$. In each of these three cases, let $C$ be the bold cycle and $I$ be the $C$-island containing the white vertices, as shown in Figure~\ref{fig:island in Wk Vk K3k}. It is straightforward to verify that the choices of $C$ and $I$ satisfy the conditions in~\ref{cycle to fan}. Hence, by~\ref{cycle to fan}, $G$ has a fan-type graph $F_{t_1, t_2, \dots, t_n}$ as a cc-minor such that $\sum_{i=1}^n t_i \geq t$. \qed

\begin{figure}[htb]
\hbox to \hsize{
\hfil
\resizebox{12cm}{!}{\input{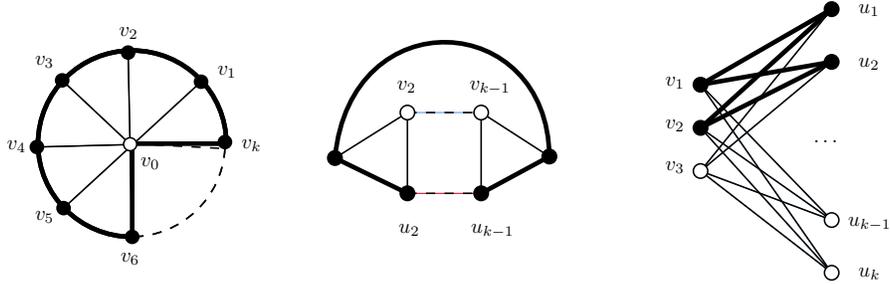}}%
\hfil
}
\caption{$C$ and a $C$-island $I$ in each of $W_k$, $V_k$, and $K_{3,k}$}\label{fig:island in Wk Vk K3k}
\end{figure}

\section{Proof of Theorem~\ref{main}}\label{proof main}

Before we present the proof of Theorem~\ref{main}, we prove two lemmas.

\begin{lemma}\label{lem:cc-minor of parallel-ext}
    If $H$ is a cc-minor of a loopless graph $G$ and $G'$ is a parallel-path extension of $G$, then $G'$ has a cc-minor $H'$ that is a parallel-path extension of $H$.
\end{lemma}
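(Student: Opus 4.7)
My plan is induction on the length $m$ of a cycle-contraction sequence realizing $H$ from $G$. The base case $m=0$ is trivial, since $H=G$ and $G'$ itself is a parallel-path extension of $H$. For the inductive step, the entire burden falls on the single-cycle case: if I can show that, whenever $K=G/C$ for a single cycle $C$ of $G$, the parallel-path extension $G'$ of $G$ has a cc-minor that is a parallel-path extension of $K$, then applying this to the first contraction in the sequence and invoking the inductive hypothesis on the remaining $m-1$ contractions (now performed on the new graph) produces the desired $H'$. Transitivity of the cc-minor relation then places $H'$ as a cc-minor of $G'$.

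For the single-cycle case, write $C=v_1e_1v_2e_2\cdots v_ke_kv_1$ (indices mod $k$). Each $e_i$ has been replaced in $G'$ by a nonempty set $\mathcal{P}_{e_i}$ of internally disjoint $v_iv_{i+1}$-paths, and internal vertices of paths lying over distinct edges of $G$ are themselves distinct. My construction picks a single path $P_i\in\mathcal{P}_{e_i}$ for every $i\in[k]$ and forms $C':=P_1\cup P_2\cup\cdots\cup P_k$, which is a cycle of $G'$ by the disjointness just noted. I contract $C'$, identifying all of $V(C)$ to a single new vertex $c$. At this point every remaining path of $\mathcal{P}_{e_i}$ (for some $i\in[k]$), together with every path of $\mathcal{P}_e$ for an edge $e\in E(G)\setminus E(C)$ whose two ends both lie in $V(C)$, has become a cycle through $c$, or a loop if it had length one (which the convention absorbs automatically). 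I then contract these remaining cycles one by one, calling the resulting graph $K'$.

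In $K'$, for every edge $f$ of $K=G/C$, the corresponding original bundle $\mathcal{P}_e$ of $G'$ survives intact (with endpoints in $V(C)$ relabelled to $c$), and bundles corresponding to distinct edges of $K$ remain pairwise internally disjoint because they were in $G'$. Hence $K'$ is a parallel-path extension of $K$, completing the single-cycle step. I expect the main obstacle to lie in the bookkeeping of the second batch of contractions: one has to verify that every edge of $G'$ lying over $E(C)$, or over an edge of $G$ that is absorbed into the vertex $c$, is accounted for by $C'$, by one of the subsequently contracted cycles, or by an auto-contracted loop, while every edge of $G'$ lying over an edge of $K$ survives with the correct endpoint labels. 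Once this accounting is checked, the induction closes.
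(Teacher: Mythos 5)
Your proof is correct, but it takes a genuinely different route from the paper's. The paper's argument is a short, global one: it invokes Proposition~\ref{prop:structure} to write $H=G/\bigl(\cup_i E(G_i)\bigr)$ for a collection of disjoint $2$-edge-connected subgraphs $G_i$, observes that replacing each edge of $G_i$ by its bundle of paths yields a $2$-edge-connected subgraph $G_i'$ of $G'$, and then contracts all the $G_i'$ at once (legitimate by Corollary~\ref{cor:contracting subgraph}); the result is immediately a parallel-path extension of $H$. You instead induct on the length of the contraction sequence and handle a single cycle contraction by hand: lift $C$ to a cycle $C'$ of $G'$ by choosing one path from each bundle over $E(C)$, contract $C'$, and then clean up the leftover paths over $E(C)$ and over chords of $C$, which have become cycles or loops through the contracted vertex. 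Your bookkeeping is sound -- the internal vertices of the bundles are new and pairwise distinct, so $C'$ really is a cycle, the only loops created are single-edge paths with both ends in $V(C)$ (absorbed by the paper's convention), and the surviving bundles over $E(G/C)$ remain internally disjoint with correctly relabelled ends -- and the induction closes by transitivity of the cc-minor relation. What the paper's approach buys is brevity and reuse of machinery already established; what yours buys is self-containedness (no appeal to the structural characterization of cc-minors) at the cost of a longer case analysis in the single-step lemma.
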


\begin{proof}
    By Proposition~\ref{prop:structure}, there is a collection $\{G_1,G_2,\dots,G_k\}$ of disjoint $2$-edge-connected subgraphs of $G$ such that $H=G/(\cup_{i=1}^kE(G_k))$. Now $G'$ is obtained from $G$ by replacing some edges with internally disjoint paths joining their ends. For each $i\in\{1,2,\dots,k\}$, let $G_i'$ be the graph that is obtained from $G_i$ by replacing all such edges in $G_i$ with the same set of internally disjoint paths joining their ends as in $G'$. Clearly, $G_i'$ is $2$-edge-connected for all $i\in\{1,2,\dots,k\}$. Let $H'=G'/(\cup_{i=1}^kE(G_i'))$. It is straightforward to check that $H'$ is a parallel-path extension of $H$.
\end{proof}

\begin{lemma}\label{lem:2-sum of cc-minor}
    Let $G$ be the $2$-sum of two loopless graphs $G_1$ and $G_2$ on the basepoint $b$. Suppose that, for each $i \in \{1,2\}$, $H_i$ is a cc-minor of $G_i$ that has $b$ as a non-loop edge. If $H$ is the $2$-sum of $H_1$ and $H_2$ on the basepoint $b$, then $H$ is a cc-minor of $G$.
\end{lemma}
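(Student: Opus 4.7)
The plan is to invoke Proposition~\ref{prop:structure} in both directions: first to expose the internal structure of each $H_i$ as a cc-minor of $G_i$, and then to combine these structures into a certificate that $H$ is a cc-minor of $G$. I would begin by applying Proposition~\ref{prop:structure} to each pair $(G_i, H_i)$ to obtain a collection $\{F_w^{(i)} : w \in V(H_i)\}$ of disjoint subgraphs of $G_i$, each either $2$-edge-connected or a single vertex, whose vertex sets partition $V(G_i)$, together with a set $\{f_e^{(i)} : e \in E(H_i)\}$ of distinct edges forming the complement of the edge sets of the $F_w^{(i)}$, with endpoints in the correct subgraphs. Let $u$ and $v$ be the ends of the basepoint $b$. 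Since $b$ is a non-loop edge of $H_i$, it appears as $f_b^{(i)}$ in each decomposition, so $u \in V(F_u^{(i)})$ and $v \in V(F_v^{(i)})$.

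Next, I would construct the corresponding decomposition of $G = G_1 \bigoplus_2 G_2$ indexed by $V(H)$, where $V(H)$ is obtained from $V(H_1) \cup V(H_2)$ by identifying the two copies of $u$ and the two copies of $v$. For $w \in V(H) - \{u,v\}$ lying in $V(H_i)$, set $F_w := F_w^{(i)}$. For $w \in \{u,v\}$, set $F_w := F_w^{(1)} \cup F_w^{(2)}$, viewed as a subgraph of $G$; the two parts share only the vertex $w$, and they share no edges since $b \notin E(F_w^{(i)})$ and $b$ is the only edge of $G_1 \cap G_2$. For each $e \in E(H) = (E(H_1) \cup E(H_2)) - \{b\}$, define $f_e$ to be $f_e^{(1)}$ or $f_e^{(2)}$ according to whether $e \in E(H_1)$ or $e \in E(H_2)$.

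The conditions demanded by Proposition~\ref{prop:structure} are then mostly bookkeeping: disjointness of the $F_w$, coverage of $V(G)$, distinctness of the $f_e$, and endpoint compatibility all follow from the corresponding properties inside each $(G_i, H_i)$ together with the observation that $u$ (resp.\ $v$) is the only vertex shared between $F_u^{(1)}$ and $F_u^{(2)}$. The one nontrivial structural check is that $F_u$ and $F_v$ are either single-vertex subgraphs or $2$-edge-connected. If both $F_u^{(1)}$ and $F_u^{(2)}$ are single vertices, then $F_u = \{u\}$; if exactly one is a single vertex, then $F_u$ equals the other; and if both are $2$-edge-connected, then $F_u^{(1)} \cup F_u^{(2)}$ shares only the vertex $u$, so deleting any single edge from the union leaves each piece connected (by $2$-edge-connectedness of each piece) and the pieces attached at $u$, which yields $2$-edge-connectedness of the union.

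Once these verifications are in place, Proposition~\ref{prop:structure} applied in its reverse direction identifies $H$ as a cc-minor of $G$. The main obstacle is the little structural observation that the union of two $2$-edge-connected graphs meeting in a single vertex is again $2$-edge-connected; the rest is careful tracking of the decompositions across the shared basepoint.
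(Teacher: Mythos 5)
Your proof is correct and follows essentially the same route as the paper: both arguments apply Proposition~\ref{prop:structure} to each $(G_i,H_i)$ and observe that, since $b$ lies in neither collection of contracted subgraphs, the two certificates combine across the $2$-sum into a certificate for $(G,H)$. The only difference is one of packaging — the paper keeps the pieces as an edge-disjoint collection and finishes with Corollary~\ref{cor:contracting subgraph}, whereas you merge the pieces at $u$ and at $v$ and reapply the proposition in the converse direction, which costs you the (correct, easy) extra observation that two $2$-edge-connected graphs sharing a single vertex have a $2$-edge-connected union.
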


\begin{proof}
    By Proposition~\ref{prop:structure}, for each $i$ in $\{1,2\}$, there is a collection $\mathcal{J}_i$ of disjoint $2$-edge-connected subgraphs of $G_i$ such that $H_i=G_i/(\cup_{J\in\mathcal{J}_i}E(J))$. Because $E(H_1)\cap E(H_2)=\{b\}$ and $b\notin E(J)$ for each $J$ in $\mathcal{J}_1\cup\mathcal{J}_2$, we know $\mathcal{J}_1\cup\mathcal{J}_2$ is a collection of edge-disjoint $2$-edge-connected subgraphs of $G$. By Corollary~\ref{cor:contracting subgraph}, we deduce that $H$, which equals $G/(\cup_{J\in\mathcal{J}_1\cup\mathcal{J}_2}E(J))$, is a cc-minor of $G$.
\end{proof}

We are now ready to prove the main theorem of the paper.\\

\noindent{\it Proof of Theorem~\ref{main}.} Note that proving Theorem~\ref{main} is equivalent to proving the following.
\setcounter{theorem}{3}
\begin{sublemma}\label{sublem:main(template)}
    Let $r$ be a positive integer. There is an integer $g(r)$ such that every loopless $2$-connected graph $G$ with $|E(G)| \geq g(r)$ has a cc-minor $H$ that is a parallel-path extension of a template with at least $r$ parts.
\end{sublemma}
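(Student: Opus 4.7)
The proof plan is to apply Tutte's tree-decomposition (Theorem~\ref{thm:tree-decomp}) to $G$ and split into cases based on the structure of the decomposition tree $T$. Let the vertex labels of $T$ be $G_1,\dots,G_k$, each a simple $3$-connected graph, a $K_3$, or a $B_3$. Set $\tau = f_{\ref{large_3_con}}(r)+1$ and $g(r) = \tau^{r+2}$. Then either some $G_i$ has $|E(G_i)| > \tau$ (Case A), or every $G_i$ has $|E(G_i)|\leq\tau$ (Case B). In either case, the plan is to first absorb all ``hanging subtrees'' of $T$ as parallel-path extensions via Lemma~\ref{2con_to_parallel}, then apply a structural cc-minor reduction to the relevant parts, and finally reassemble with Lemmas~\ref{lem:cc-minor of parallel-ext} and~\ref{lem:2-sum of cc-minor}.

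In Case A the large part $G_i$ must be simple $3$-connected, since $|E(K_3)|=|E(B_3)|=3$. For each subtree $T'$ of $T$ meeting $G_i$ at a basepoint $b$, let $H_{T'}\subseteq G$ be the corresponding $2$-connected subgraph containing $b$. Lemma~\ref{2con_to_parallel} gives $H_{T'}$ a cc-minor that is a parallel connection of cycles through $b$. Iterating Lemma~\ref{lem:2-sum of cc-minor} (using the identity cc-minor on $G_i$) yields a cc-minor $G^+$ of $G$ that is a parallel-path extension of $G_i$. By Theorem~\ref{large_3_con}, $G_i$ has a fan-type cc-minor $F_{t_1,\dots,t_n}$ with $\sum_s t_s\geq r$; this fan-type graph is itself a template with at least $r-3$ parts. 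Applying Lemma~\ref{lem:cc-minor of parallel-ext} to the reduction $G_i\rightsquigarrow F_{t_1,\dots,t_n}$ produces a cc-minor of $G^+$ (hence of $G$) that is a parallel-path extension of $F_{t_1,\dots,t_n}$, as desired.

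In Case B, $T$ has at least $\tau^{r+1}$ vertices and maximum degree at most $\tau$ (since each part's tree-degree is bounded by its number of edges), so $T$ contains a path $G_{j_1}-\cdots-G_{j_\ell}$ with $\ell\geq r$. Let $b_p^{\mathrm{in}}$ and $b_p^{\mathrm{out}}$ denote the basepoints of $G_{j_p}$ shared with $G_{j_{p-1}}$ and $G_{j_{p+1}}$, respectively, handling the ends by trivial adjustments. For each $p$, choose a template cc-minor $Q_p$ of $G_{j_p}$ containing $b_p^{\mathrm{in}}$ and $b_p^{\mathrm{out}}$: if $G_{j_p}\in\{K_3,B_3\}$, set $Q_p=G_{j_p}$; otherwise Theorem~\ref{3-con cc_minor} supplies a $Q_p$ isomorphic to a bond graph, a fan-type graph, or a graph whose simplification is a $K_4$ with $b_p^{\mathrm{in}},b_p^{\mathrm{out}}$ non-adjacent, each of which is a template. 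Absorb the hanging subtrees off each $G_{j_p}$ via Lemma~\ref{2con_to_parallel} and Lemma~\ref{lem:2-sum of cc-minor}, obtaining a cc-minor $G^+$ of $G$ in which $G_{j_p}$ is replaced by a parallel-path extension $G_{j_p}^+$ along its hanging basepoints, while $b_p^{\mathrm{in}},b_p^{\mathrm{out}}$ remain as untouched single edges. Lemma~\ref{lem:cc-minor of parallel-ext} then lifts each $G_{j_p}\rightsquigarrow Q_p$ to $G_{j_p}^+\rightsquigarrow Q_p^+$, where $Q_p^+$ is a parallel-path extension of $Q_p$ still containing $b_p^{\mathrm{in}}$ and $b_p^{\mathrm{out}}$. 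Iterated use of Lemma~\ref{lem:2-sum of cc-minor} along $b_p^{\mathrm{out}}=b_{p+1}^{\mathrm{in}}$ then produces a cc-minor of $G$ isomorphic to the $2$-sum of the $Q_p^+$'s, which is a parallel-path extension of the length-$\ell$ template obtained by $2$-summing the $Q_p$'s.

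The main obstacle is verifying that the assembled $2$-sum of templates is itself a template: that its tree-decomposition is a path and the $K_4$-matching condition holds at every $K_4$ part. The non-adjacency of basepoints in a $K_4$ part follows directly from Theorem~\ref{3-con cc_minor}(iii). The path structure of the combined template requires choosing, within each $Q_p$ of type $B_n$ or fan-type, an internal template decomposition that places $b_p^{\mathrm{in}}$ and $b_p^{\mathrm{out}}$ at opposite ends of $Q_p$'s template path; this is always possible, because $B_n$ can be written as a sequence of $2$-sums of $B_3$'s that starts with one prescribed edge and ends with the other, and analogously for fan-type graphs with prescribed outer spokes. The remaining quantitative bookkeeping---choosing $\tau$ and $g(r)$ so that each case yields at least $r$ template parts---is routine.
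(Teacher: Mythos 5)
Your proposal is correct and follows essentially the same route as the paper: the same dichotomy on whether some part of the Tutte tree-decomposition is large, the same use of Theorem~\ref{large_3_con} in the first case and of a long path in $T$ together with Theorem~\ref{3-con cc_minor} in the second, and the same absorption and reassembly via Lemmas~\ref{2con_to_parallel}, \ref{lem:cc-minor of parallel-ext} and~\ref{lem:2-sum of cc-minor}. The only adjustment needed is the one you already flag as routine bookkeeping: in Case A you must invoke Theorem~\ref{large_3_con} with parameter $r+2$ rather than $r$, so that the resulting fan-type graph, being an $\big((\sum_i t_i)+n-3\big)$-template, has at least $r$ parts; the paper's choice $g(r)=\sum_{i=1}^{r}\big(f_{\ref{large_3_con}}(r+2)\big)^i$ does exactly this.
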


We shall show that $G$ has a parallel-path extension of a template with at least $r$ parts as a cc-minor when $g(r) =\sum_{i=1}^{r} \big(f_{\ref{large_3_con}}(r+2)\big)^i$ where $ f_{\ref{large_3_con}}$ is the function whose existence was established in Theorem~\ref{large_3_con}. Let $T$ be a tree decomposition of $G$ such that each vertex of $T$ is either a simple $3$-connected graph, or $K_3$, or $B_3$, as described in Theorem~\ref{thm:tree-decomp}. First, we show the following.

\begin{sublemma}\label{sub:no large block}
    If there is a vertex $G_v \in V(T)$ such that $|E(G_v)| > f_{\ref{large_3_con}}(r+2)$, then~\ref{sublem:main(template)} holds.
\end{sublemma}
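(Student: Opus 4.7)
The plan is to apply Theorem~\ref{large_3_con} inside $G_v$ to extract a fan-type cc-minor that is itself a template with at least $r$ parts, and then lift this structure to a cc-minor of $G$ via the $2$-sums of the tree decomposition, using the three lemmas preceding this proof.

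First, I observe that $|E(G_v)| > f_{\ref{large_3_con}}(r+2) \geq 3$, so $G_v$ is neither $K_3$ nor $B_3$ and must be a simple $3$-connected graph. Applying Theorem~\ref{large_3_con} with $t = r+2$, I obtain a cc-minor $F_0 = F_{t_1, \ldots, t_n}$ of $G_v$ that is a fan-type graph with $\sum_{i=1}^{n} t_i \geq r+2$. Because $\sum t_i \geq 3$, we have $F_0 \neq B_2$, so the remark following the definition of template identifies $F_0$ as a $((\sum t_i) + n - 3)$-template; since $n \geq 1$, the number of parts is at least $r$.

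Next, I will realise a parallel-path extension of $G_v$ as a cc-minor of $G$. Write $G = G_v \oplus_2 H_1 \oplus_2 \cdots \oplus_2 H_k$, where $b_1, \ldots, b_k$ are the basepoints of $G_v$ and each $H_j$ is the $2$-connected graph formed by iterating $2$-sums over the subtree of $T$ attached to $G_v$ at $b_j$. Lemma~\ref{2con_to_parallel} yields a cc-minor $H_j'$ of $H_j$ that is a parallel connection, with basepoint $b_j$, of cycles through $b_j$; in particular, $b_j$ is a non-loop edge of $H_j'$. Setting $G^{*} = G_v \oplus_2 H_1' \oplus_2 \cdots \oplus_2 H_k'$ and iteratively invoking Lemma~\ref{lem:2-sum of cc-minor} shows $G^{*}$ is a cc-minor of $G$. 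Structurally, $G^{*}$ arises from $G_v$ by replacing each basepoint $b_j$ with the internally disjoint $u_j v_j$-paths comprising $H_j' - b_j$, so $G^{*}$ is a parallel-path extension of $G_v$ (the non-basepoint edges playing the role of trivial single-edge paths).

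Finally, since $F_0$ is a cc-minor of the loopless graph $G_v$ and $G^{*}$ is a parallel-path extension of $G_v$, Lemma~\ref{lem:cc-minor of parallel-ext} produces a cc-minor $H'$ of $G^{*}$ that is a parallel-path extension of $F_0$. Transitivity of the cc-minor relation then makes $H'$ a cc-minor of $G$, and since $F_0$ is a template with at least $r$ parts, $H'$ witnesses~\ref{sublem:main(template)}. The central difficulty is coupling the internal cycle-contractions in $G_v$ with the external $2$-sum structure of the tree decomposition; what makes the argument run cleanly is choosing each $H_j'$ so that $b_j$ survives as a non-loop edge, which allows Lemma~\ref{lem:2-sum of cc-minor} to be applied one basepoint at a time and then Lemma~\ref{lem:cc-minor of parallel-ext} to collapse $G_v$ down to $F_0$.
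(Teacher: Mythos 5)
Your proposal is correct and follows essentially the same route as the paper: both arguments invoke Theorem~\ref{large_3_con} to extract a fan-type cc-minor of $G_v$ that is a template with at least $r$ parts, use Lemma~\ref{2con_to_parallel} together with Lemma~\ref{lem:2-sum of cc-minor} to realise a parallel-path extension of $G_v$ as a cc-minor of $G$, and finish with Lemma~\ref{lem:cc-minor of parallel-ext}. The only difference is cosmetic ordering of these steps, plus your slightly more explicit part-count $(\sum t_i)+n-3\geq r$.
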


Note that $|f_{\ref{large_3_con}}(t)| \geq t$ for all $t\geq 3$, so, for any positive integer $r$, we have $|E(G_v)| > f_{\ref{large_3_con}}(r+2) \geq 3$. Thus, $G_v$ cannot be isomorphic to $K_3$ or $B_3$. Therefore, $G_v$ must be a simple 3-connected graph. Note that each component of $T - G_v$ is a tree decomposition for a $2$-connected graph. Let $\{J_1, J_2, \dots, J_n\}$ be the collection of such graphs. Then $G$ can be obtained by repeatedly gluing each $J_i$ to $G_v$ via a $2$-sum on the basepoint $b_i$ where $\{b_i\}=E(J_i) \cap E(G_v)$ for all $i \in \{1,2,\dots,n\}$. By Lemma~\ref{2con_to_parallel}, for each $i \in \{1,2,\dots,n\}$, the graph $J_i$ has a cc-minor $J_i'$ that is a parallel connection, with basepoint $b_i$, of a collection of cycles containing $b_i$. Thus, $G$ has a cc-minor $G_v'$ that is obtained by, for each $i$ in $\{1,2\dots,n\}$, gluing $J_i'$ to $G_v$ via a $2$-sum on the basepoint $b_i$. It is straightforward to verify that $G_v'$ is a parallel-path extension of $G_v$.

By Lemma~\ref{lem:cc-minor of parallel-ext}, it suffices to show that $G_v$ has a cc-minor that is a template with at least $r$ parts. By Theorem~\ref{large_3_con}, $G_v$ has a cc-minor that is a fan-type graph with at least $r+2$ spokes, which constitutes a template with at least $r$ parts. Hence~\ref{sub:no large block} holds.\\

We may now assume that $|E(G_v)| \leq f_{\ref{large_3_con}}(r+2)$ for every vertex $G_v$ in $V(T)$. First we show the following.

\begin{sublemma}\label{sublem:long_path}
    $T$ has a path with at least $r$ vertices.
\end{sublemma}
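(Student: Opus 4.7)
The plan is to prove the contrapositive: if $T$ has no path on $r$ vertices, then $|E(G)| < g(r)$. Write $M = f_{\ref{large_3_con}}(r+2)$ for brevity, so by the standing assumption each vertex label satisfies $|E(G_v)| \leq M$ and $g(r) = \sum_{i=1}^{r} M^i$.

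First I would bound the maximum degree of $T$. For any $G_v \in V(T)$, every edge of $T$ incident with $G_v$ is a distinct basepoint of $G_v$, hence a distinct element of $E(G_v)$. Therefore $\deg_T(G_v) \leq |E(G_v)| \leq M$, so $\Delta(T) \leq M$.

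Next, suppose for contradiction that every path in $T$ has at most $r-1$ vertices. Pick any vertex $v_0 \in V(T)$. Any other vertex $u$ is joined to $v_0$ by a unique path in $T$, whose number of vertices is at most $r-1$; thus the distance from $v_0$ to $u$ is at most $r-2$. Counting by distance from $v_0$ and using $\Delta(T) \leq M$, the number of vertices at distance exactly $i$ from $v_0$ is at most $M^i$, so
\[
|V(T)| \;\leq\; 1 + M + M^2 + \cdots + M^{r-2}.
\]

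Finally, since $E(G)$ is a subset of $\bigcup_{v \in V(T)} E(G_v)$, we have
\[
|E(G)| \;\leq\; \sum_{G_v \in V(T)} |E(G_v)| \;\leq\; M \cdot |V(T)| \;\leq\; \sum_{i=1}^{r-1} M^i \;<\; \sum_{i=1}^{r} M^i \;=\; g(r),
\]
contradicting the hypothesis $|E(G)| \geq g(r)$. This proves~\ref{sublem:long_path}. The only real content is the two elementary observations (degree bounded by the number of basepoints, and bounded diameter plus bounded degree giving a bounded number of vertices); I do not anticipate any obstacle.
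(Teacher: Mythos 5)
Your proof is correct and follows essentially the same route as the paper: bound $\Delta(T)$ by the number of basepoints in each vertex label, count vertices of $T$ by distance from a fixed root, and use $|E(G)|\geq g(r)$ together with $|E(G_v)|\leq f_{\ref{large_3_con}}(r+2)$ to force a vertex at distance at least $r-1$. The only cosmetic difference is that you argue by contradiction where the paper argues directly; the estimates are identical.
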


Since the basepoints are deleted after $2$-sums, for each $G_v\in V(T)$, we have $d_T(G_v)\leq |E(G_v)|$. Therefore, we conclude that $d_T(G_v) \leq f_{\ref{large_3_con}}(r+2)$ for every $G_v\in V(T)$. For any two vertices $G_u$ and $G_v$, the {\it distance} $d(G_u,G_v)$ between them is the number of edges of the shortest $G_uG_v$-path in $T$. For an arbitrary vertex $G_w$ in $V(T)$, we have the following.

\begin{sublemma}\label{sublem:big height}
For each non-negative integer $h$, 
\[|\{G_v\in V(T):d(G_w,G_v)=h\}|\leq \big(f_{\ref{large_3_con}}(r+2)\big)^h.\]
\end{sublemma}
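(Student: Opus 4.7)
The plan is a straightforward induction on $h$, leveraging the degree bound $d_T(G_v)\leq |E(G_v)|\leq f_{\ref{large_3_con}}(r+2)$ that has just been established. Write $D=f_{\ref{large_3_con}}(r+2)$ for the resulting upper bound on the maximum degree of $T$; the task is to show that each distance-level around $G_w$ in $T$ contains at most $D^{h}$ vertices.

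For the base case $h=0$, the only vertex at distance zero from $G_w$ is $G_w$ itself, matching $D^{0}=1$. For the inductive step, I would exploit the key property that $T$ is a tree: for each $G_v\neq G_w$, the unique $G_wG_v$-path enters $G_v$ through exactly one neighbor of $G_v$, and that neighbor lies at distance $d(G_w,G_v)-1$ from $G_w$. Hence every vertex at distance $h$ from $G_w$ is adjacent to a (unique) vertex at distance $h-1$, and each vertex at distance $h-1$ has at most $D$ neighbors in $T$ overall. Combining these observations with the inductive hypothesis gives
\[|\{G_v\in V(T):d(G_w,G_v)=h\}|\leq D\cdot |\{G_u\in V(T):d(G_w,G_u)=h-1\}|\leq D\cdot D^{h-1}=D^{h},\]
completing the induction.

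No real obstacle is anticipated, as this is the standard ``levels of a bounded-degree tree grow geometrically'' estimate, and the one nontrivial ingredient, the degree bound on $T$, is already in hand from the preceding paragraph. The intended payoff is that, combined with the global hypothesis $|E(G)|\geq g(r)=\sum_{i=1}^{r}D^{i}$, summing \ref{sublem:big height} over $h\in\{0,1,\dots,r-1\}$ will bound the number of vertices (and hence, via $|E(G_v)|\leq D$ for each vertex label $G_v$, the total number of edges of $G$) whenever every path in $T$ has fewer than $r$ vertices. This will force the existence of a path on at least $r$ vertices in $T$, thereby establishing~\ref{sublem:long_path}.
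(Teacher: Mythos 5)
Your argument is correct and is exactly the standard level-by-level induction that the paper leaves implicit: the paper merely states~\ref{sublem:big height} after establishing the degree bound $d_T(G_v)\leq |E(G_v)|\leq f_{\ref{large_3_con}}(r+2)$, and your induction (unique parent at distance $h-1$ in a tree, at most $D$ neighbors per vertex) is the intended justification. No gaps.
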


Since $|E(G)|\geq g(r)$ and $|E(G_v)| \leq f_{\ref{large_3_con}}(r+2)$ for each vertex $G_v$ in $V(T)$, the tree $T$ has at least $\frac{g(r)}{f_{\ref{large_3_con}}(r+2)}$ vertices. As $g(r)=\sum_{i=1}^{r}\big( f_{\ref{large_3_con}}(r+2)\big)^i$, we deduce that $T$ has at least $\sum_{j=0}^{r-1} \big(f_{\ref{large_3_con}}(r+2)\big)^j$ vertices. Therefore, by~\ref{sublem:big height}, there is a vertex $G_q\in V(T)$ such that $d(G_w,G_q)\geq r-1$. Hence~\ref{sublem:long_path} holds.\\

Let $P$ be a path $G_1 G_2 \dots G_r$ in $T$. Note that $P$ is a graph-labeled tree representing a graph $G_P$ obtained from the graphs $G_1, G_2, \dots, G_r$ by applying a sequence of $2$-sums. Note that each component of $T-V(P)$ is a tree decomposition for a $2$-connected graph. Let $\{F_1,F_2,\dots,F_m\}$ be the collection of these graphs. Then $G$ can be obtained by, for each $i$ in $\{1,2,\dots,m\}$, gluing $F_i$ to $G_P$ via a $2$-sum on the basepoint $p_i$ where $\{p_i\}=E(F_i) \cap E(G_P)$. By Lemma~\ref{2con_to_parallel}, for each $i \in \{1,2,\dots,m\}$, the graph $F_i$ has a cc-minor $F_i'$ that is a parallel connection, with basepoint $p_i$, of a collection of cycles containing $p_i$. Thus, $G$ has a cc-minor $G_P'$ that is obtained by, for each $i$ in $\{1,2,\dots,m\}$, gluing $F_i'$ to $G_P$ via a $2$-sum on the basepoint $p_i$. It is straightforward to verify that $G_P'$ is a parallel-path extension of $G_P$. By Lemma~\ref{lem:cc-minor of parallel-ext}, it remains only to show that $G_P$ has a cc-minor that is a template with at least $r$ parts.

For each $i\in\{1,2,\dots,r-1\}$, let $e_i$ be the unique edge in $E(G_i)\cap E(G_{i+1})$ that is used as the basepoint between $G_i$ and $G_{i+1}$ in the construction of $G_P$. For each $i\in\{2,3,\dots,r-1\}$, by Theorem~\ref{3-con cc_minor}, $G_i$ has a cc-minor $G_i'$ that is
\begin{enumerate}
    \item[(\romannum{1})] a bond graph containing $e_{i-1}$ and $e_{i+1}$; or
    \item[(\romannum{2})] a fan-type graph containing $e_{i-1}$ and $e_{i+1}$ as distinct outer spokes that are not parallel; or
    \item[(\romannum{3})] a parallel extension of $K_4$ that has $e_{i-1}$ and $e_{i+1}$ as non-adjacent edges.
\end{enumerate}
For $i \in \{1, r\}$, if $G_i$ is isomorphic to $K_3$ or $B_3$, let $G_i' = G_i$. Now suppose that $G_1$ is not isomorphic to $K_3$ or $B_3$. Then, by Corollary~\ref{cor:3-edge-con to parallel}, $G_1$ has a cc-minor $G_1'$ that contains $e_1$ and is isomorphic to a bond graph with at least three edges. Define $G_r'$ symmetrically when $G_r$ is not isomorphic to $K_3$ or $B_3$. Let $G_P'$ be the graph that is obtained by applying a sequence of $2$-sums to the graphs $G_1', G_2', \dots, G_r'$, using the edges $e_1, e_2, \dots, e_{r-1}$ as basepoints. Applying Lemma~\ref{lem:2-sum of cc-minor} inductively, we see that $G_P'$ is a cc-minor of $G_P$.
It is straightforward to verify that $G_P'$ is a template on at least $r$ parts.
\qed

\section*{Acknowledgments}
The authors thank the anonymous referees for their helpful comments and suggestions, which improved the presentation of this paper.

\end{document}